\theoremstyle{plain}
\newtheorem{theorem}[equation]{Theorem}
\newtheorem{lemma}[equation]{Lemma}
\newtheorem{prop}[equation]{Proposition}
\newtheorem{cor}[equation]{Corollary}
\newtheorem{utheorem}{\textrm{\textbf{Theorem}}}
\theoremstyle{definition}
\newtheorem{defn}[equation]{Definition}
\newtheorem{remark}[equation]{\normalfont{\textit{Remark}}}
\numberwithin{equation}{section}
\newcommand{\avg}{\operatorname{avg}}
\newcommand{\wt}{\operatorname{wt}}
\newcommand{\conv}{\operatorname{conv}}
\newcommand{\supp}{\operatorname{supp}}
\newcommand{\spa}{\operatorname{span}}
\newcommand{\vla}{\ensuremath{\mathbb{V}^\lambda}}
\newcommand{\R}{\ensuremath{\mathbb{R}}}
\newcommand{\C}{\ensuremath{\mathbb{C}}}
\newcommand{\Q}{\ensuremath{\mathbb{Q}}}
\newcommand{\bba}{\ensuremath{\mathbb{A}}}
\newcommand{\scrf}{\ensuremath{\mathscr{F}}}
\newcommand{\Z}{\ensuremath{\mathbb{Z}}}
\newcommand{\calp}{\ensuremath{\mathcal{P}}\xspace}
\newcommand{\liehr}{\ensuremath{\mathfrak{h}_{\mathbb{R}}}}
\newcommand{\lie}[1]{\ensuremath{\mathfrak{#1}\xspace}}
\newcommand{\comment}[1]{}
\begin{document}
\title[Standard parabolic subsets of highest weight modules]{Standard
parabolic subsets\\ of highest weight modules}

\author{Apoorva Khare}
\email[A.~Khare]{\tt khare@stanford.edu}
\address{Departments of Mathematics and Statistics, Stanford University,
Stanford, CA 94305, USA}
\date{\today}

\subjclass[2010]{Primary: 17B10; Secondary: 17B20, 52B15, 52B20}
\keywords{Highest weight module, parabolic Verma module, Weyl polytope,
standard parabolic subset, face map, inclusion relations, $f$-polynomial}

\begin{abstract}
In this paper we study certain fundamental and distinguished subsets of
weights of an arbitrary highest weight module over a complex semisimple
Lie algebra.
These sets ${\rm wt}_J \mathbb{V}^\lambda$ are defined for each highest
weight module $\mathbb{V}^\lambda$ and each subset $J$ of simple roots;
we term them ``standard parabolic subsets of weights''.
It is shown that for any highest weight module, the sets of simple roots
whose corresponding standard parabolic subsets of weights are equal form
intervals in the poset of subsets of the set of simple roots under
containment. Moreover, we provide closed-form expressions for the maximum
and minimum elements of the aforementioned intervals for all highest
weight modules $\mathbb{V}^\lambda$ over semisimple Lie algebras
$\mathfrak{g}$. Surprisingly, these formulas only require the Dynkin
diagram of $\mathfrak{g}$ and the integrability data of
$\mathbb{V}^\lambda$. As a consequence, we extend classical work by
Satake, Borel--Tits, Vinberg, and Casselman, as well as recent variants
by Cellini--Marietti to all highest weight modules.

We further compute the dimension, stabilizer, and vertex set of standard
parabolic faces of highest weight modules, and show that they are
completely determined by the aforementioned closed-form expressions.
We also compute the $f$-polynomial and a minimal half-space
representation of the convex hull of the set of weights. These results
were recently shown for the adjoint representation of a simple Lie
algebra, but analogues remain unknown for any other finite- or
infinite-dimensional highest weight module. Our analysis is uniform and
type-free, across all semisimple Lie algebras and for arbitrary highest
weight modules.
\end{abstract}
\maketitle

\setcounter{page}{2363}

\section{Introduction}

This paper continues the analysis of arbitrary highest weight modules
over a complex semisimple Lie algebra $\lie{g}$, which was initiated in
\cite{Khwf}. Highest weight modules are fundamental in the study of Lie
algebras and representation theory. Classically, they are crucial in
studying the (parabolic) Bernstein-Gelfand-Gelfand Category
$\mathcal{O}$, primitive ideals of $U(\lie{g})$, quantum groups and
crystals (see e.g.~\cite{Dix,HK,H3}). More recently, highest weight
modules and certain distinguished subsets of their weights have gained
renewed attention for several reasons, including the study of
Kirillov--Reshetikhin modules over quantum affine Lie algebras, abelian
ideals, categorification, weight multiplicities, and the combinatorics of
root and (pseudo-)Weyl polytopes.
(See e.g.~\cite{CM,CP,CG2,Ka,Ka2,Me}, and the references therein and also
in \cite{Khwf}.)
Yet while certain special families such as (parabolic) Verma modules and
their finite-dimensional quotients are well-understood (but not fully
so), the same is not true of arbitrary highest weight modules.

The goal of the present paper -- as of previous work \cite{Khwf} -- is to
improve our understanding of general highest weight modules $\vla$. (All
notation is explained in Section \ref{S2}.) In \cite{Khwf}, the notion of
the Weyl polytope was extended, from finite-dimensional simple modules to
general highest weight modules. More precisely, we showed that for a
large class of highest weight modules $\vla$ with highest weight $\lambda
\in \lie{h}^*$, the convex hull of the weights is a convex polyhedron
that is invariant under a distinguished parabolic subgroup of the Weyl
group.
Moreover, for certain highest weight modules $\vla$ -- including all
simple highest weight modules $L(\lambda)$ for $\lambda \in \lie{h}^*$ --
we computed the set of weights, in three different ways.

A question that has been studied in detail in the literature by Satake,
Borel--Tits, Vinberg, Casselman, Cellini--Marietti, and others (see
\cite[Section 2.4]{Khwf}), involves analyzing inclusion relations between
pairs of faces of the aforementioned polyhedron for finite-dimensional
highest weight modules, i.e., for the family of Weyl polytopes. However,
the redundancies in this analysis have not been fully classified. Similar
results for general highest weight modules remain even more elusive. In
the first main result of this paper (see Theorem \ref{T5} in Section
\ref{S3}), we show that to each ``standard parabolic face'' correspond
certain unique minimum and maximum sets of simple roots. Moreover, we
provide closed-form expressions for these extremal subsets of simple
roots, in the process completely classifying the aforementioned
redundancies, for arbitrary highest weight modules.

Note that neither the existence of these extremal subsets of roots, nor
closed-form formulas for them, were known even for the thoroughly studied
and well-understood family of simple, finite-dimensional
$\lie{g}$-modules, excepting the adjoint representation. In contrast, our
formulas hold for arbitrary highest weight modules and arbitrary faces,
and are type-free.
Moreover, these formulas can be deduced directly from the root data of
the Dynkin diagram of $\lie{g}$ and the set of ``integrable simple
roots'' for the highest weight module. In particular, the present paper
completely resolves, for all modules $\vla$, the aforementioned problem
studied by Vinberg \cite{Vi} and others for finite-dimensional $\vla$. We
also extend recent work \cite{CM}, which discussed the adjoint
representation for simple $\lie{g}$.

Another goal is to compute convexity-theoretic quantities related to
highest weight modules. For instance, it is natural to ask what is the
dimension, or the stabilizer subgroup in the Weyl group, of a given face
of the convex hull of the set of weights. Moreover, if this hull is a
convex polyhedron, is it possible to compute its $f$-polynomial, or a
minimal half-space representation? Once again, the answers are not known
in the literature except for the adjoint representation for simple
$\lie{g}$. Our second objective in this paper is to answer all of these
questions for general highest weight modules $\vla$.
We show in this paper that not only Weyl polytopes/finite-dimensional
simple modules, but in fact all highest weight modules over semisimple
$\lie{g}$, are amenable to detailed analysis via representation theory
and convex analysis. See Theorem \ref{Tfinite} in Section \ref{S3} for
more details.

A related motivation involves the recent interesting work \cite{CM} of
Cellini and Marietti. In \cite{CM}, the authors studied the combinatorics
of irreducible root systems in great detail, and established several
interesting properties of the root polytope in terms of the corresponding
affine root system. (Recall that the root polytope is defined for a
simple Lie algebra $\lie{g}$ as the convex hull of the set of roots, and
it coincides with the Weyl polytope for the highest root.)
It is natural to ask if the results in \cite{CM} are specific
manifestations of broader phenomena that hold for all Weyl polytopes, or
even for the convex hulls of weights of arbitrary highest weight modules
$\vla$ (which naturally extend the notion of the Weyl polytope, by
results in \cite{Khwf}). Formulating and proving such results would
result in a deeper understanding of Weyl polytopes, finite-dimensional
$\lie{g}$-modules, and more.
However, we were unable to find such results in the literature other than
for the adjoint representation for simple $\lie{g}$ (in \cite{CM}).
Indeed, the situation for semisimple $\lie{g}$ and general $\lambda,
\vla$ is far more involved than that for simple $\lie{g}$ and the adjoint
representation, or even for finite-dimensional modules. There are various
technical reasons why the case of general $\vla$ is much harder, as we
explain below (see Remark \ref{Rminmax}). Nevertheless, the present paper
completely settles many of the questions addressed in \cite{CM}, by
proving them for every module $\vla$.

A special case of our main theorems is that Cellini--Marietti's results
hold with the exact same formulas, for all finite-dimensional $\vla$ with
$\lambda$ having the same support as the affine root of (simple)
$\mathfrak{g}$. Thus we immediately obtain hitherto unknown results about
a large family of Weyl polytopes, with proofs coming from representation
theory as opposed to the combinatorics of root systems.

\subsection*{Organization}

This paper is organized as follows. Section \ref{S2} reviews basic
notation as well as previous results in the literature, which lead to
several natural and motivating questions. We then present our three main
results in Section \ref{S3}. Each of the following three sections is
devoted to proving one of these main theorems.
These results discuss inclusion relations between standard parabolic
subsets of weights of highest weight modules, as well as computing
dimensions of affine hulls, stabilizers in parabolic Weyl subgroups,
$f$-polynomials, and minimal half-space representations in arbitrary
highest weight $\lie{g}$-modules.
In Section \ref{S7}, we show that the long roots, as well as the highest
and lowest roots among them, which are standard notions in the adjoint
representation, have natural analogues in compact standard parabolic
faces of all highest weight modules. Finally in Section \ref{Scellini},
we provide a dictionary to explain how our work specializes to the
results in recent work \cite{CM} on the root polytope for a simple Lie
algebra, in terms of the corresponding affine root system. We further
show how the results in \cite{CM} in fact hold for a large family of Weyl
polytopes.

\section{Notation and preliminary results}\label{S2}

We begin by setting some notation. Given an $\R$-vector space
$\mathbb{V}$ and $R \subset \R$, $X,Y \subset \mathbb{V}$, define:
(a) $R_+ := R \cap [0,\infty)$,
(b) $X \pm Y$ to be the Minkowski sum and difference of $X,Y$,
(c) $\conv_\R X$ to denote the convex hull of $X$; and
(d) let $RX$ denote the set of all finite $\R$-linear combinations of
elements of $X$ with coefficients in $R$.

Fix a complex semisimple Lie algebra $\lie{g}$ as well as a triangular
decomposition $\lie g = \lie n^+ \oplus \lie h \oplus \lie n^-$. Let the
corresponding root system be $\Phi$. Corresponding to $\lie{n}^+$, fix
distinguished $\C$-bases of simple roots $\Delta := \{ \alpha_i : i \in I
\} \subset \Phi$ and the associated fundamental weights $\Omega := \{
\omega_i : i \in I \}$, both indexed by $I$. For any $J \subset I$,
define $\Delta_J := \{ \alpha_j : j \in J \}$, and $\Omega_J$ similarly.
Let $\liehr^* := \R \Delta$ be the real form of $\lie{h}^*$; then
$\liehr^* = \R \Omega = \R \Omega_I$.
 Moreover, $\lie{h}^*$ has a standard partial order via: $\lambda \geq
\mu$ if $\lambda - \mu \in \Z_+ \Delta$.
Next, define $P := \Z \Omega \supset Q := \Z \Delta$ to be the weight and
root lattices in $\liehr^*$ respectively, and
\begin{equation}
\begin{aligned}
P^+_J := \Z_+ \Omega_J, \quad Q^+_J := \Z_+ \Delta_J, &
\quad P^+ := P^+_I, \quad Q^+ := Q^+_I,\\
\Phi^\pm_J := \Phi \cap \pm Q^+_J, & \quad \Phi^\pm := \Phi^\pm_I.
\end{aligned}
\end{equation}

\noindent Thus, $P^+ = P^+_I$ is the set of dominant integral weights.
Let $(,)$ be the positive definite symmetric bilinear form on $\liehr^*$
induced by the restriction of the Killing form on $\lie g$ to $\liehr$.
Then $(\omega_i, \alpha_j) = \delta_{i,j} (\alpha_j, \alpha_j) / 2\
\forall i,j \in I$.
Also define $h_i$ to be the unique element of $\lie{h}$ identified with
$(2 / (\alpha_i, \alpha_i)) \alpha_i$ via the Killing form. The elements
$h_i$ form an $\R$-basis of $\lie{h}_\R$. Fix a set of Chevalley
generators $\{ x_{\alpha_i}^\pm \in \lie n^\pm : i \in I \}$ such that
$[x_{\alpha_i}^+, x_{\alpha_j}^-] = \delta_{ij} h_i$ for all $i,j \in I$.
Also extend $(,)$ to all of $\lie{h}^*$. Finally, the Weyl group is the
finite subgroup $W \subset O(\lie{h}^*)$ generated by the simple
reflections $\{ s_i = s_{\alpha_i} : i \in I \}$, where $s_i$ sends a
weight $\lambda \in \lie{h}^*$ to $\lambda - \lambda(h_i) \alpha_i$. Now
define $W_J$ to be the subgroup of $W$ generated by $\{ s_j : j \in J
\}$, with unique longest element $w_\circ^J$. 

We now discuss various distinguished highest weight modules. Given
$\lambda \in \lie{h}^*$, define $M(\lambda)$ to be the Verma module with
highest weight $\lambda$, and $L(\lambda)$ to be its unique simple
quotient. Thus $M(\lambda) := U\lie{g} / U \lie{g}(\lie{n}^+ + \ker
\lambda)$. A highest weight module is a quotient of some Verma module,
and is usually denoted in this paper by $M(\lambda) \twoheadrightarrow
\vla$. Note that $\vla$ is finite-dimensional if and only if $\lambda \in
P^+$ is dominant integral and $\vla = L(\lambda)$ is simple. In this case
the convex hull of the weights $\wt L(\lambda)$ is a compact polytope,
called a \textit{Weyl polytope} and denoted by $\calp(\lambda) :=
\conv_\R \wt L(\lambda)$.

Given $\lambda \in \lie{h}^*$, define $J_\lambda := \{ i \in I :
\lambda(h_i) \in \Z_+ \}$. Let $\lie{g}_J$ denote the semisimple Lie
subalgebra of $\lie{g}$ generated by $\{ x_{\alpha_j}^\pm : j \in J \}$,
and define the parabolic Lie subalgebra $\lie{p}_J := \lie{g}_J + \lie{h}
+ \lie{n}^+$ for all $J \subset I$. Now given $\lambda \in \lie{h}^*$ and
$J \subset J_\lambda$, define the ($J$-)parabolic Verma module with
highest weight $\lambda$ to be $M(\lambda,J) := U(\lie{g})
\otimes_{U(\lie{p}_J)} L_J(\lambda)$. Here, $L_J(\lambda)$ is a simple
finite-dimensional highest weight module over the Levi subalgebra
$\lie{h} + \lie{g}_J$; it is also killed by $\lie{g}_{I \setminus J} \cap
\lie{n}^+$ (in $M(\lambda, J)$). Note that parabolic Verma modules
include all Verma modules (for which $J = \emptyset$) and all
finite-dimensional simple modules (for which $\lambda \in P^+$ and $J =
I$).\medskip

The following definition introduces the principal objects of study in the
paper. As we presently discuss, these objects have been studied in the
literature for finite-dimensional $\lie{g}$-modules, most intensively for
the adjoint representation.

\begin{defn}
Given $\lambda \in \lie{h}^*$ and $M(\lambda) \twoheadrightarrow \vla$, a
\textit{standard parabolic (sub)set of weights of $\wt \vla$} is $\wt_J
\vla := (\lambda - \Z_+ \Delta_J) \cap \wt \vla$ for some $J \subset I$;
and a \textit{standard parabolic face} is $\conv_\R \wt_J \vla$.
\end{defn}

We now recall previous work involving standard parabolic subsets and
faces for various classes of highest weight modules $M(\lambda)
\twoheadrightarrow \vla$. To our knowledge, these results were previously
known for parabolic Verma modules, but not for other highest weight
modules. The first result establishes integrability for a unique
distinguished subset of simple roots, for arbitrary modules $\vla$.

\begin{theorem}[Khare, {\cite[Theorem A]{Khwf}}]\label{T1}
Given $\lambda \in \lie{h}^*$ and $M(\lambda) \twoheadrightarrow \vla$,
there exists a unique subset $J(\vla) \subset I$ such that the following
are equivalent:
(a) $J \subset J(\vla)$;
(b) $\wt_J \vla$ is finite; 
(c) $\wt_J \vla$ is $W_J$-stable;
(d) $\wt \vla$ is $W_J$-stable.
Moreover, if $\vla_\lambda$ is spanned by $v_\lambda$, then
\begin{equation}
J(\vla) := \{ i \in J_\lambda : (x^-_{\alpha_i})^{\lambda(h_i) + 1}
v_\lambda = 0 \}.
\end{equation}

\noindent In particular, if $\vla$ is a parabolic Verma module
$M(\lambda,J')$ for $J' \subset J_\lambda$, or a simple module
$L(\lambda)$, then $J(\vla) = J'$ or $J_\lambda$ respectively. 
\end{theorem}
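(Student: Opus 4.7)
The plan is to take the formula in the statement as the definition of $J(\vla)$, and to prove the four-way equivalence via the cycle (a) $\Rightarrow$ (d) $\Rightarrow$ (c) $\Rightarrow$ (a) together with (a) $\Leftrightarrow$ (b); uniqueness of $J(\vla)$ then follows since (a) alone recovers it as the maximum of the family of qualifying subsets. The central technical ingredient I would establish first is the identification
\[
\wt_J \vla = \wt(U(\lie{g}_J) v_\lambda) \qquad \text{for every } J \subset I.
\]
The inclusion $\supseteq$ is immediate. For $\subseteq$, take a nonzero weight vector of weight $\mu = \lambda - \beta$ with $\beta \in Q^+_J$, write it as $u v_\lambda$ with $u \in U(\lie{n}^-)$ of weight $-\beta$ (using PBW and $\lie{n}^+ v_\lambda = 0$), and observe that any monomial $x^-_{\gamma_1} \cdots x^-_{\gamma_k}$ in $u$ with $\sum_i \gamma_i = \beta \in Q^+_J$ forces each $\gamma_i \in \Phi^+_J$, since positive roots have nonnegative simple-root coefficients and hence cannot cancel on $\Delta \setminus \Delta_J$. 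Thus $u \in U(\lie{n}^-_J)$ and $\mu \in \wt(U(\lie{g}_J) v_\lambda)$.

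With this identification, (a) $\Leftrightarrow$ (b) reduces to the classical fact that the $\lie{g}_J$-highest weight module $U(\lie{g}_J) v_\lambda$ is finite-dimensional precisely when $\lambda(h_j) \in \Z_+$ and $(x^-_{\alpha_j})^{\lambda(h_j)+1} v_\lambda = 0$ for every $j \in J$; finiteness of $\wt_J \vla$ is equivalent to this because every weight space of $\vla$ is finite-dimensional. For (a) $\Rightarrow$ (d): for each $j \in J \subset J(\vla)$, the vanishing $(x^-_{\alpha_j})^{\lambda(h_j)+1} v_\lambda = 0$ combined with $\vla = U(\lie{g}) v_\lambda$ forces the $\lie{sl}_2$-copy $\langle x^\pm_{\alpha_j}, h_j \rangle$ to act locally finitely on $\vla$ (via local finiteness of its adjoint action on $U(\lie{g})$), whence $s_j$ permutes $\wt \vla$; this yields $W_J$-stability. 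For (d) $\Rightarrow$ (c): given $\mu \in \wt_J \vla$ and $j \in J$, we have $s_j \mu \in \wt \vla$ by (d), and since $s_j$ alters only the $\alpha_j$-coefficient, $s_j \mu \in \lambda - \Z \Delta_J$; combined with $\wt \vla \subset \lambda - Q^+$, this forces $s_j \mu \in \lambda - Q^+_J$, so $s_j \mu \in \wt_J \vla$.

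For (c) $\Rightarrow$ (a): $W_J$-stability applied to $\lambda \in \wt_J \vla$ gives $s_j \lambda = \lambda - \lambda(h_j)\alpha_j \in \lambda - Q^+_J$, forcing $\lambda(h_j) \in \Z_+$ and hence $J \subset J_\lambda$. For any $\mu = \lambda - k\alpha_j \in \wt_{\{j\}} \vla$, $s_j \mu = \lambda - (\lambda(h_j) - k)\alpha_j \in \wt_J \vla \subset \lambda - Q^+_J$ forces $k \leq \lambda(h_j)$; so $\wt_{\{j\}} \vla$ is finite, equivalently $(x^-_{\alpha_j})^{\lambda(h_j)+1} v_\lambda = 0$. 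For the two distinguished examples: when $\vla = L(\lambda)$ and $j \in J_\lambda$, the vector $(x^-_{\alpha_j})^{\lambda(h_j)+1} v_\lambda$ generates a proper submodule of $M(\lambda)$ by standard $\lie{sl}_2$-theory and hence vanishes in the simple quotient, giving $J(L(\lambda)) = J_\lambda$. When $\vla = M(\lambda, J')$ with $J' \subset J_\lambda$, these vectors vanish for $j \in J'$ already in $L_{J'}(\lambda) \hookrightarrow M(\lambda, J')$, while the PBW isomorphism $M(\lambda, J') \cong U(\lie{u}^-_{J'}) \otimes L_{J'}(\lambda)$ (with $\lie{u}^-_{J'}$ the opposite nilradical) shows that $x^-_{\alpha_j}$ acts freely on $v_\lambda$ for $j \notin J'$, so $J(M(\lambda, J')) = J'$. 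The main obstacle I anticipate is the initial identification $\wt_J \vla = \wt(U(\lie{g}_J) v_\lambda)$: this is the bridge reducing the full theorem to the classical finite-dimensionality criterion for $\lie{g}_J$-highest weight modules, and the nontrivial inclusion $\subseteq$ depends critically on the finite-type property that positive roots whose sum lies in $Q^+_J$ must individually lie in $\Phi^+_J$.
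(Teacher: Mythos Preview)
Your argument is correct. However, the present paper does not prove this statement at all: Theorem~\ref{T1} is quoted from the author's earlier work \cite[Theorem~A]{Khwf} as a preliminary result, with no proof given here. So there is no ``paper's own proof'' to compare against in this document.

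That said, your approach is the natural one and matches what one would expect the original proof in \cite{Khwf} to look like. The key reduction $\wt_J \vla = \wt(U(\lie{g}_J) v_\lambda)$ is exactly the identity recorded in the paper as equation~\eqref{Eface}, and your PBW justification of the nontrivial inclusion is clean. The cycle (a)~$\Rightarrow$~(d)~$\Rightarrow$~(c)~$\Rightarrow$~(a) together with (a)~$\Leftrightarrow$~(b) is the standard route: (a)~$\Rightarrow$~(d) via local finiteness of the $\lie{sl}_2$-action propagated through $U(\lie{g})$, (d)~$\Rightarrow$~(c) by the support constraint, and (c)~$\Rightarrow$~(a) by bounding the $\alpha_j$-string through $\lambda$. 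The computations for $M(\lambda,J')$ and $L(\lambda)$ are also standard. I see no gaps.
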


The subset $J(\vla)$ is ubiquitous and crucially used throughout the
remainder of the paper.

The next result establishes for a large class of highest weight modules
$\vla$ that the convex hull of the set of weights is a polyhedron. It is
also possible to compute the vertices, faces, and stabilizer subgroup in
$W$ of this polyhedron.

\begin{theorem}[Khare, {\cite[Theorems B and C]{Khwf}}]\label{T23}
Suppose $(\lambda, \vla)$ satisfy one of the following:
(a) $\lambda(h_i) \neq 0\ \forall i \in I$ and $\vla$ is arbitrary;
(b) $|J_\lambda \setminus J(\vla)| \leq 1$ (e.g., if $\vla$ is simple for
any $\lambda \in \lie{h}^*$);
(c) $\vla = M(\lambda, J')$ for some $J' \subset J_\lambda$; or
(d) $\vla$ is pure (in the sense of \cite{Fe}).

Then the convex hull (in Euclidean space) $\conv_\R \wt \vla \subset
\lambda + \liehr^*$ is a convex polyhedron with vertices
$W_{J(\vla)}(\lambda)$, and the stabilizer subgroup in $W$ of both $\wt
\vla$ and $\conv_\R \wt \vla$ is $W_{J(\vla)}$. Moreover, a nonempty
subset $Y \subset \wt \vla$ maximizes a linear functional $\varphi \in
\lie{h}$ (i.e., $Y$ is the set of weights on a supporting hyperplane)
if and only if $Y = w(\wt_J \vla)$ for some $w \in W_{J(\vla)}$ and $J
\subset I$.
\end{theorem}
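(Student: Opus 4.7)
The plan is to establish the single polyhedral identity
\begin{equation*}
\conv_\R \wt \vla \;=\; \conv_\R W_J(\lambda) \;-\; \R_+(\Delta \setminus \Delta_J), \tag{$\star$}
\end{equation*}
where $J := J(\vla)$; from this all three assertions follow via standard convex-geometric arguments, since the right-hand side is a Minkowski sum of a polytope and a polyhedral cone, and is therefore manifestly a convex polyhedron.

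First I would prove the inclusion $\supseteq$ in $(\star)$. By Theorem \ref{T1}, $\wt \vla$ is $W_J$-stable, so $W_J(\lambda) \subseteq \wt \vla$; it then remains to show that for every $w \in W_J$ the cone $w\lambda - \R_+(\Delta \setminus \Delta_J)$ lies in $\conv_\R \wt \vla$. In case (c) this is immediate from the PBW identification $M(\lambda,J) \cong U(\mathfrak{n}^-_{I \setminus J}) \otimes L_J(\lambda)$ as $\mathfrak{h}$-modules, which explicitly yields $\wt M(\lambda,J) = \wt L_J(\lambda) - \Z_+(\Phi^+ \setminus \Phi^+_J)$. In cases (a), (b), and (d), I would exploit the surjection $M(\lambda,J) \twoheadrightarrow \vla$ (valid because $J(\vla) = J$) to show that sufficiently many weights in the $I \setminus J$ directions survive; the four hypotheses are designed precisely so that such a weight-reachability argument succeeds.

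For the reverse inclusion $\subseteq$ in $(\star)$, take $\mu \in \wt \vla$, use $W_J$-invariance to bring it to a $W_J$-dominant representative $\mu_0 \in \lambda - \Z_+ \Delta$, and then apply a standard Weyl-group convexity argument to place $\mu_0$ inside $\conv_\R W_J(\lambda) - \R_+(\Delta \setminus \Delta_J)$. With $(\star)$ in hand, the vertices of the right-hand polyhedron must be extreme points of the polytope summand that are separable from the recession cone, which forces them into $W_J(\lambda)$; that each $w\lambda$ really is a vertex follows by exhibiting an explicit strictly-dominant functional that separates it, yielding (1). For the stabilizer claim (2), any $w \in W$ preserving $\wt \vla$ or $\conv_\R \wt \vla$ must permute the vertex set $W_J(\lambda)$ and preserve the recession cone $-\R_+(\Delta \setminus \Delta_J)$; combined with the case hypotheses controlling the stabilizer of $\lambda$ in $W$ (e.g.\ regularity in case (a)), these constraints force $w \in W_J$.

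Finally, for the supporting-hyperplane assertion (3), given $\varphi \in \mathfrak{h}$, I would act by $W_J$ to move $\varphi$ into the closed $W_J$-dominant chamber; then $(\star)$ identifies the face of $\conv_\R \wt \vla$ maximizing $\varphi$ as $\wt_K \vla$, where $K = \{ i \in I : \varphi(\alpha_i) = 0 \}$, with the general case following by $W_J$-translation. The \emph{main obstacle} is the inclusion $\supseteq$ in $(\star)$ for cases (a), (b), and (d), where no explicit PBW-type description of $\wt \vla$ is available; the four hypotheses are tailored precisely to enable a uniform reachability argument, and carrying this out requires a delicate case-by-case analysis.
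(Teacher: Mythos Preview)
The paper does not prove Theorem~\ref{T23}; it is quoted from \cite{Khwf}. The only hint the present paper gives is Proposition~\ref{P23}, which isolates the key identity $\conv_\R \wt \vla = \conv_\R \wt M(\lambda, J(\vla))$ and asserts that all conclusions of Theorem~\ref{T23} follow from it. Your overall strategy --- reduce everything to a single polyhedral identity for $\conv_\R \wt \vla$ and then read off vertices, stabilizer, and faces --- is therefore aligned with how the result is actually organized in \cite{Khwf}.

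However, your identity $(\star)$ is \emph{false} as written: the recession cone of $\conv_\R \wt M(\lambda,J)$ is $-\R_+(\Phi^+ \setminus \Phi^+_J)$, not $-\R_+(\Delta \setminus \Delta_J)$. You even write down the correct PBW description $\wt M(\lambda,J) = \wt L_J(\lambda) - \Z_+(\Phi^+ \setminus \Phi^+_J)$ in case~(c), but then silently replace $\Phi^+ \setminus \Phi^+_J$ by $\Delta \setminus \Delta_J$ when passing to the convex hull. These cones differ already for $\lie{g} = \lie{sl}_3$, $J = \{1\}$, $\lambda = \omega_1$: the weight $\omega_1 - 2\alpha_1 - \alpha_2 = (\omega_1 - \alpha_1) - (\alpha_1 + \alpha_2)$ lies in $\wt M(\omega_1,\{1\})$, but it does \emph{not} lie in $\conv_\R\{\omega_1, \omega_1 - \alpha_1\} - \R_+ \alpha_2$, since any point of the latter has $\alpha_1$-coefficient in $[0,1]$. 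The cone $-\R_+(\Delta \setminus \Delta_J)$ is not even $W_J$-stable, so your proposed right-hand side cannot equal the $W_J$-stable set $\conv_\R \wt \vla$.

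Once $(\star)$ is corrected to use $\R_+(\Phi^+ \setminus \Phi^+_J)$ (equivalently $\R_+ W_J(\Delta \setminus \Delta_J)$), the rest of your outline is reasonable and matches the approach recorded in Proposition~\ref{P23}. Two places still need more care: for the stabilizer claim, preserving the vertex set $W_J(\lambda)$ and the recession cone does not by itself force $w \in W_J$ when $\lambda$ is singular (cases (b)--(d)), so the ``case hypotheses controlling the stabilizer'' you allude to must actually be invoked; and the inclusion $\supseteq$ in the corrected $(\star)$ for cases (a), (b), (d) --- which you correctly flag as the main obstacle --- is where the real content lies and is not addressed beyond a promissory note.
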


Theorem \ref{T23} extends the notion of the Weyl polytope from
finite-dimensional modules $L(\lambda)$ to general highest weight modules
$\vla$. The result also extends the classification by Chari \textit{et
al} \cite{CDR} of all maximizer subsets in the set of roots (i.e., in
$\wt \lie{g}$), as well as previous work \cite{KR} on maximizer subsets
in the weights of parabolic Verma modules. Such results were used by
Chari and her coauthors in \cite{CG2,CKR} to study distinguished
categories and associated families of Koszul algebras arising from
Kirillov--Reshetikhin modules over quantum affine Lie algebras.\medskip

We now discuss the motivations behind the present paper.
Theorem \ref{T23} and previous work in \cite{Vi,KR} shows that the sets
$\wt_J \vla$ form a distinguished family of subsets of weights for a very
large class of highest weight modules $\vla$, including all parabolic
Verma modules $M(\lambda,J')$ and their simple quotients.
In particular, the standard parabolic subsets $\wt_J \vla$ and their
$W_{J(\vla)}$-conjugates are precisely the sets of weights on the faces
of the convex hull $\conv_\R \wt \vla$. Equivalently, standard parabolic
subsets and their Weyl group conjugates are precisely the maximizer
subsets of weights in $\wt \vla$. More precisely, if $\vla_\lambda = \C
v_\lambda$ and $\rho_J := \sum_{j \in J} \omega_j$, then
\begin{equation}\label{Eface}
\wt_J \vla = \wt \vla \cap (\lambda - \Z \Delta_J) = \wt U(\lie{g}_J)
v_\lambda = \arg \max_{\wt \vla} (\rho_{I \setminus J}, -).
\end{equation}

\noindent Also note by \cite[Theorem C]{Khwf} that the sets $\wt_J \vla$
are precisely the \textit{weak $\bba$-faces} of $\wt \vla$ for ``most''
$\vla$ and all additive subgroups $\bba \subset (\R,+)$. Thus, the goal
of this paper is to study standard parabolic sets of weights in detail,
for arbitrary highest weight $\lie{g}$-modules $\vla$.

Given a finite-dimensional simple module, or more generally a parabolic
Verma module, it is natural to classify its faces and the redundancies
among them. Results along these lines were shown by Satake, Borel--Tits,
Vinberg, and Casselman for Weyl polytopes $\conv_\R \wt M(\lambda,I)$;
see \cite[Section 2.4]{Khwf} for more details. Here we state Vinberg's
formulation using the present notation.

\begin{theorem}[Vinberg, \cite{Vi}]\label{Tvin}
Suppose $\lambda \in P^+$. Then $\conv_\R \wt L(\lambda)$ is a
$W$-invariant convex polytope with vertex set $W(\lambda)$. Every face of
this polytope is $W$-conjugate to a unique standard parabolic face
$\conv_\R \wt_J L(\lambda)$.
\end{theorem}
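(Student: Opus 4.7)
The plan is to derive Theorem \ref{Tvin} essentially as a corollary of Theorems \ref{T1} and \ref{T23}. First, since $\lambda \in P^+$ we have $J_\lambda = I$, $L(\lambda)$ is finite-dimensional, and Theorem \ref{T1} gives $J(L(\lambda)) = J_\lambda = I$. Consequently $\wt L(\lambda)$ is finite (so $\conv_\R \wt L(\lambda)$ is a polytope) and $W$-stable (so the polytope is $W$-invariant). Applying Theorem \ref{T23}(b), which is available since $|J_\lambda \setminus J(L(\lambda))| = 0 \leq 1$, yields that the vertex set is $W_{J(L(\lambda))}(\lambda) = W(\lambda)$.

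Next, I would deduce the face-conjugacy assertion from the final clause of Theorem \ref{T23}. Any nonempty proper face $F$ of a polytope is the set of points realizing the maximum of some linear functional $\varphi \in \lie h$; its vertices are the weights in $\wt L(\lambda)$ achieving this maximum. By Theorem \ref{T23} (with $W_{J(L(\lambda))} = W$), this maximizer subset equals $w(\wt_J L(\lambda))$ for some $w \in W$ and $J \subset I$. Taking convex hulls gives $F = w \cdot \conv_\R \wt_J L(\lambda)$, realizing $F$ as a $W$-conjugate of the standard parabolic face $\conv_\R \wt_J L(\lambda)$. The improper face $\conv_\R \wt L(\lambda)$ corresponds to $J = I$, and vertices correspond to $J = \emptyset$ (with $w \in W$ ranging freely).

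For the uniqueness assertion, suppose $F_J := \conv_\R \wt_J L(\lambda)$ and $F_{J'} := \conv_\R \wt_{J'} L(\lambda)$ satisfy $F_J = u F_{J'}$ for some $u \in W$; I must show $F_J = F_{J'}$ as subsets of $\lambda + \liehr^*$. By \eqref{Eface}, the weights on $F_{J'}$ are precisely $\wt_{J'} L(\lambda) \ni \lambda$, so $\lambda$ is a vertex of $F_{J'}$; hence $u(\lambda)$ is a vertex of $F_J$, lying in $W(\lambda) \cap (\lambda - \Z_+ \Delta_J)$. The standard orbit-intersection identity $W(\lambda) \cap (\lambda + \Z \Delta_J) = W_J(\lambda)$ for dominant $\lambda$ then gives $u(\lambda) = v(\lambda)$ for some $v \in W_J$, so that $\sigma := v^{-1} u \in \mathrm{Stab}_W(\lambda) = W_{I_\lambda}$ with $I_\lambda := \{i \in I : \lambda(h_i) = 0\}$. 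Using the $W_J$-invariance of $F_J$ furnished by Theorem \ref{T1}, the equation $F_J = u F_{J'}$ simplifies to $F_J = \sigma F_{J'}$ with $\sigma \in W_{I_\lambda}$. A final combinatorial step, comparing the equality of weight sets $\wt_J L(\lambda) = \sigma \wt_{J'} L(\lambda)$ with the containments $\wt_J L(\lambda) \subset \lambda - \Z_+ \Delta_J$ and $\wt_{J'} L(\lambda) \subset \lambda - \Z_+ \Delta_{J'}$, together with $\sigma(\lambda) = \lambda$, yields $\wt_J L(\lambda) = \wt_{J'} L(\lambda)$ and hence $F_J = F_{J'}$.

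The main obstacle is the uniqueness step, where one must resolve the redundancy whereby different $J \subset I$ can give the same standard parabolic face---precisely the phenomenon that the main theorems of this paper (notably Theorem \ref{T5}) classify. The essential inputs are: the $W_J$-invariance of $\wt_J L(\lambda)$ from Theorem \ref{T1}; the identification $\mathrm{Stab}_W(\lambda) = W_{I_\lambda}$; and the orbit-intersection identity $W(\lambda) \cap (\lambda + \Z \Delta_J) = W_J(\lambda)$ valid for dominant $\lambda$.
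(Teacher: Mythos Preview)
Your reduction of the first three claims (polytope, $W$-invariance, vertex set, and existence of a $W$-conjugate standard parabolic face) to Theorems \ref{T1} and \ref{T23} is correct and matches how the paper views Theorem \ref{Tvin}: it is cited as a known result and later recovered as the special case $J(\vla)=I$ of the main theorems.

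The gap is in your uniqueness argument. Your reduction to $F_J = \sigma F_{J'}$ with $\sigma \in W_{I_\lambda} = \mathrm{Stab}_W(\lambda)$ is fine, and the orbit-intersection identity you invoke does follow from the vertex computation in Proposition \ref{P11}/Theorem \ref{Tkr3}. But the ``final combinatorial step'' does not go through as stated. Knowing $\sigma(\lambda)=\lambda$ and $\wt_J L(\lambda) \subset \lambda - \Z_+\Delta_J$, $\wt_{J'} L(\lambda) \subset \lambda - \Z_+\Delta_{J'}$ does not by itself force $\sigma(\wt_{J'} L(\lambda)) = \wt_{J'} L(\lambda)$: for $\sigma \in W_{I_\lambda}$ the set $\sigma(\Z_+\Delta_{J'})$ need not lie in any $\Z_+\Delta_K$, and one can check in small examples (e.g.\ $\lie{sl}_4$, $\lambda=\omega_2$, $\sigma=s_3$, $J'=\{1,2\}$) that $\sigma$ can carry a standard parabolic set to a non-standard one. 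So the containments you list carry no obvious information about $\sigma$ acting trivially.

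What actually closes this gap is precisely the implication $(1)\Rightarrow(2)$ of Theorem \ref{T5}, which the paper singles out as ``the meat of this implication'' and proves via a barycenter/dominance argument: one passes to the finite $W_{J(\vla)}$-stable slices $T^\mu$, observes that $\sigma$ must send $\rho_{T^\mu_{J'}}$ to $\rho_{T^\mu_J}$, and uses that both barycenters are dominant (Proposition \ref{Pstable}) together with uniqueness of the dominant representative in a Weyl orbit to conclude they are equal, whence the maximizer subsets coincide. You correctly flag in your final paragraph that Theorem \ref{T5} is the right tool here; the issue is only that the preceding paragraph presents the step as an elementary comparison when it is not.
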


The property of interest here is the ``uniqueness'' of the standard
parabolic face (since the remainder of the result is already known for
general modules $\vla$ by Theorem \ref{T23}).

In an interesting recent paper \cite{CM}, Cellini and Marietti studied in
great detail, the \textit{root polytope} $\conv_\R \wt \lie{g}$ of a
simple Lie algebra $\lie{g}$ and the set of integral weights contained in
it. The authors showed several hitherto unexplored combinatorial
properties of $\wt \lie{g} = \Phi \sqcup \{ 0 \}$. In particular, they
classified redundancies between the faces of the root polytope $\conv_\R
\wt \lie{g}$:

\begin{theorem}[Cellini--Marietti, {\cite[Theorem 1.2]{CM}}]\label{Tcm1}
Suppose $\lie{g}$ is simple, with highest root $\theta \in \Phi^+$.
Given $\theta = \sum_{i \in I} m_i \alpha_i$ and $J \subset I$, define
$F_J \subset \R \Delta$ to be the convex hull of the roots $\mu \in \Phi$
such that $(\mu, \omega_j) = m_j$ for all $j \in J$. Then there exist
subsets $\partial J, \overline{J} \subset I$ such that for $J' \subset
I$,
\[ F_{J'} = F_J \quad \Longleftrightarrow \quad \partial J \subset J'
\subset \overline{J}. \]
\end{theorem}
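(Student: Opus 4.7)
The plan is to reduce Cellini--Marietti's statement to an assertion about standard parabolic subsets of weights of the adjoint representation $L(\theta)=\lie{g}$, and then establish the interval property via the submodule structure of $\lie{g}$.

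\textbf{Reduction.} Using $(\omega_j,\alpha_i)=\delta_{ij}(\alpha_i,\alpha_i)/2$, the condition $(\mu,\omega_j)=m_j=(\theta,\omega_j)$ on a root $\mu$ is equivalent to the $\alpha_j$-coefficient of $\theta-\mu$ vanishing, i.e., $j\notin\supp(\theta-\mu)$. Combined with \eqref{Eface}, this identifies $F_J=\conv_\R\wt_{I\setminus J}L(\theta)$ (the $J=\emptyset$ case is handled by the fact that $0\in\conv_\R\Phi$). Since $\wt_KL(\theta)$ is exactly $\wt L(\theta)$ intersected with the supporting hyperplane of the corresponding face, $F_{J'}=F_J$ if and only if $\wt_{I\setminus J'}L(\theta)=\wt_{I\setminus J}L(\theta)$. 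Setting $K=I\setminus J$, the theorem reduces to showing that the fibers of $K\mapsto\wt_KL(\theta)$ are intervals in $2^I$---equivalently, closed under pairwise intersection and union.

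\textbf{Intersection closure.} Directly from $\wt_KL(\theta)=\{\mu\in\wt L(\theta):\supp(\theta-\mu)\subset K\}$, one has $\wt_{K_1\cap K_2}L(\theta)=\wt_{K_1}L(\theta)\cap\wt_{K_2}L(\theta)$. Hence $\wt_{K_1}=\wt_{K_2}=X$ forces $\wt_{K_1\cap K_2}=X$, yielding the minimum $\partial J$.

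\textbf{Union closure (the main obstacle).} The idea is to recover $\wt_KL(\theta)$ from an intrinsic subspace of $\lie{g}$. Let $L_K\subset\lie{g}$ be the $\lie{g}_K$-submodule generated by $\lie{g}_\theta$. Since $[\lie{n}^+,\lie{g}_\theta]=0$, in fact $L_K=U(\lie{g}_K^-)\cdot\lie{g}_\theta$, and a chain argument in the root system (any $\mu\in\wt_KL(\theta)\cap\Phi^+$ is connected to $\theta$ by a sequence of positive roots via simple-root subtractions in $\Delta_K$) shows $\wt L_K=\wt_KL(\theta)$. Crucially, for $K\subsetneq I$ the weight $0$ lies outside $\wt_KL(\theta)$ (because $\supp(\theta)=I$ in a simple Lie algebra), so $L_K=\bigoplus_{\mu\in\wt_KL(\theta)}\lie{g}_\mu$; combined with $\dim\lie{g}_\mu=1$ for $\mu\in\Phi$, this means \emph{$L_K$ is completely determined as a subspace of $\lie{g}$ by its weight set}. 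Consequently, $\wt_{K_1}L(\theta)=\wt_{K_2}L(\theta)$ (with $K_1,K_2\subsetneq I$) forces $L_{K_1}=L_{K_2}$ as subspaces of $\lie{g}$. This common subspace is stable under both $\lie{g}_{K_1}$ and $\lie{g}_{K_2}$, hence under the Lie algebra $\lie{g}_{K_1\cup K_2}$ they generate; as it contains $\lie{g}_\theta$, it contains $L_{K_1\cup K_2}$. The reverse inclusion is monotonicity, so $L_{K_1}=L_{K_1\cup K_2}$ and $\wt_{K_1}L(\theta)=\wt_{K_1\cup K_2}L(\theta)$. The boundary case $K_i=I$ is trivial: $\wt_IL(\theta)=\Phi\cup\{0\}$ is attained only at $K=I$, since any smaller $K$ misses the weight $0$.

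\textbf{Where the difficulty concentrates.} The decisive step is the subspace-recovery argument, which relies on two features specific to the adjoint representation---one-dimensionality of nonzero root spaces and the absence of the zero weight from $\wt_KL(\theta)$ for $K\subsetneq I$. With both closures in hand, $\partial J$ and $\overline{J}$ exist formally as the intersection (respectively union) of all elements of the fiber. Producing the explicit combinatorial descriptions implicit in \cite{CM} requires further analysis of which simple roots are redundant for generating $L_K$ from $\lie{g}_\theta$---this is precisely the Dynkin-diagrammatic content that the paper's Theorem~\ref{T5} axiomatizes for arbitrary $\vla$, where the above weight-space-dimension trick is no longer available and a more refined argument is needed.
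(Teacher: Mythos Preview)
Your argument is correct, and it takes a genuinely different route from the paper's. The paper obtains Theorem~\ref{Tcm1} as the special case $\vla=L(\theta)$ of Theorem~\ref{T5}: it first constructs explicit candidates $J_{\min}=J_3(\vla)\sqcup J_4(\vla)$ and $J_{\max}$ via Dynkin-diagram combinatorics (Definition~\ref{Dminmax}, equations~\eqref{Emin}--\eqref{Emax}), proves directly that $\wt_{J_{\min}\sqcup J_2}\vla=\wt_{J_{\max}\sqcup J_2}\vla$ via Lemma~\ref{Lweyl}, and then shows any $J'$ in the fiber is sandwiched between them by a case analysis on simple roots. The interval property emerges from the explicit formulas; closure under union and intersection are never isolated as the organizing principle. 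By contrast, you argue abstractly: intersection-closure is immediate from $\wt_{K_1}\vla\cap\wt_{K_2}\vla=\wt_{K_1\cap K_2}\vla$, and union-closure comes from the observation that (for $K\subsetneq I$) the submodule $L_K=U(\lie{g}_K)\lie{g}_\theta$ is literally recoverable from its weight set because nonzero root spaces are one-dimensional and $0\notin\wt_KL(\theta)$. This is elegant and short, and it gives the existence of $\partial J,\overline{J}$ without ever writing down what they are; the trade-off is that it is adjoint-specific (as you note, the weight-space-dimension trick collapses for general $\vla$, where multiplicities exceed one and $U(\lie{g}_K)v_\lambda$ need not be determined by its weights alone). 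The paper's approach pays the cost of a longer proof to obtain closed-form expressions for $J_{\min},J_{\max}$ valid for every highest weight module, which is precisely what your final paragraph anticipates.
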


In other words, the set of possible $J' \subset I$ such that $\wt_{J'}
\lie{g} = \wt_J \lie{g}$ forms an interval in the poset of subsets of $I$
under containment.
Note also that $F_J = \conv_\R \wt_{I \setminus J} \lie{g}$ for all $J
\subset I$. Thus Theorem \ref{Tcm1} says that for all $J$, there exist
smallest and largest sets $J_{\min}$ and $J_{\max}$, such that $\{ J' :
\conv_\R \wt_{J'} \lie{g} = \conv_\R \wt_J \lie{g} \} = [J_{\min},
J_{\max}]$.

The present discussion, including Theorems \ref{Tvin} and \ref{Tcm1}, can
be reformulated as follows. Using Theorem \ref{T1}, define the
\textit{face map} to be
\begin{equation}\label{Efacemap}
\scrf_{\vla} : W_{J(\vla)} \times 2^I \to 2^{\wt \vla}, \qquad
\scrf_{\vla}(w,J) := w(\wt_J \vla),
\end{equation}

\noindent where $2^S$ denotes the power set of a set $S$. We term
$\scrf_{\vla}$ the face map because by \eqref{Eface}, $\scrf_{\vla}(w,J)$
equals the set of weights on a face of $\conv_\R \wt \vla$ for all $w \in
W_{J(\vla)}$ and $J \subset I$. Next, define
\begin{equation}\label{Efacemap1}
\scrf^{(1)}_{\vla} = \scrf_{\vla}(1,-) : 2^I \to 2^{\wt \vla}, \qquad
\scrf^{(1)}_{\vla}(J) := \wt_J \vla.
\end{equation}

\noindent (Note that both $\scrf_{\vla}$ and $\scrf^{(1)}_{\vla}$ have
finite sets as domains.) 
Now Theorem \ref{Tvin} shows a connection between the fibers of
$\scrf_{L(\lambda)}$ and of $\scrf^{(1)}_{L(\lambda)}$: namely, if
$\lambda \in P^+$ and $\scrf_{L(\lambda)}(w,J) =
\scrf_{L(\lambda)}(w',J')$, then $\scrf^{(1)}_{L(\lambda)}(J) =
\scrf^{(1)}_{L(\lambda)}(J')$. On the other hand, Theorem \ref{Tcm1}
classifies the fibers of $\scrf^{(1)}_{\lie{g}}$ and shows that they are
intervals of the form $[J_{\min}, J_{\max}]$.
Note that a similar result holds trivially for arbitrary Verma modules
$\vla = M(\lambda)$, since the assignment $J \mapsto \wt_J M(\lambda)$ is
one-to-one. In previous work \cite{Khwf} we also showed a similar result
for all highest weight modules with ``generic'' highest weight:

\begin{theorem}[Khare, {\cite[Corollary 3.16]{Khwf}}]\label{Tsimplyreg}
Suppose $\lie{g}$ is semisimple, $\lambda(h_i) \neq 0\ \forall i \in I$,
and $M(\lambda) \twoheadrightarrow \vla$ is arbitrary. Then $J \mapsto
\wt_J \vla$ is an injective map from subsets of $I$ to subsets of $\wt
\vla$.
\end{theorem}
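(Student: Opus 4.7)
My plan is to recover $J$ directly from $\wt_J \vla$ by identifying which simple root predecessors of $\lambda$ are present. Specifically, I would prove the identity
\[
J \;=\; \{\, j \in I : \lambda - \alpha_j \in \wt_J \vla \,\},
\]
from which injectivity of $J \mapsto \wt_J \vla$ follows immediately: two subsets $J, J' \subset I$ producing the same standard parabolic set would be recovered as the same set.

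The inclusion ``$\supseteq$'' is formal, since by definition $\wt_J \vla \subset \lambda - \Z_+ \Delta_J$, so $\lambda - \alpha_j \in \wt_J \vla$ forces $\alpha_j \in \Z_+ \Delta_J$ and hence $j \in J$. The substance of the proof is the opposite inclusion, which reduces to the claim that under the hypothesis $\lambda(h_i) \neq 0$ for all $i \in I$, every weight of the form $\lambda - \alpha_j$ with $j \in I$ actually occurs in $\wt \vla$.

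The key step is a single Chevalley commutator computation. Since $\vla$ is a highest weight module with $\vla_\lambda = \C v_\lambda$, the weight space at $\lambda - \alpha_j$ has dimension at most one and is spanned by $x^-_{\alpha_j} v_\lambda$. Thus it suffices to show that $x^-_{\alpha_j} v_\lambda$ does not vanish in $\vla$ for each $j \in I$. If it did vanish, then applying $x^+_{\alpha_j}$ inside $\vla$ and using $[x^+_{\alpha_j}, x^-_{\alpha_j}] = h_j$ together with $x^+_{\alpha_j} v_\lambda = 0$ would yield
\[
0 \;=\; x^+_{\alpha_j} x^-_{\alpha_j} v_\lambda \;=\; h_j v_\lambda \;=\; \lambda(h_j)\, v_\lambda,
\]
contradicting the hypothesis $\lambda(h_j) \neq 0$. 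Consequently $\lambda - \alpha_j \in \wt \vla$ for every $j \in I$, and for $j \in J$ this gives $\lambda - \alpha_j \in \wt \vla \cap (\lambda - \Z_+ \Delta_J) = \wt_J \vla$, completing the identification.

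No Weyl group, integrability, or convexity input is needed, so there is no serious obstacle. The only point worth noting is that the commutator computation must be performed inside the quotient $\vla$ rather than in $M(\lambda)$, but this is automatic since the Chevalley relations descend to every quotient. The entire argument rests on the observation that the nonvanishing of each $\lambda(h_i)$ prevents any ``first descent'' vector $x^-_{\alpha_j} v_\lambda$ from being killed in $\vla$, and this single fact encodes enough information to read $J$ off from $\wt_J \vla$.
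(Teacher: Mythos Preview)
Your proof is correct. The identity $J = \{\, j \in I : \lambda - \alpha_j \in \wt_J \vla \,\}$ does hold under the hypothesis $\lambda(h_i) \neq 0$ for all $i$, and the commutator computation $x^+_{\alpha_j} x^-_{\alpha_j} v_\lambda = \lambda(h_j) v_\lambda$ is exactly the right tool.

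The paper takes a different route. It does not prove Theorem~\ref{Tsimplyreg} directly here (the result is cited from prior work), but rather recovers it as a special case of the general classification Theorem~\ref{T5}: when $\lambda(h_i) \neq 0$ for all $i$, one has $\{\lambda\}^\perp = \emptyset$, so $J_4(\vla) = J_5(\vla) = \emptyset$ and hence $J_{\min} = J_{\max} = J \cap J(\vla)$; the equivalence $(2) \Leftrightarrow (5)$ then forces $J \setminus J(\vla) = J' \setminus J(\vla)$ and $J \cap J(\vla) = J' \cap J(\vla)$, i.e.\ $J = J'$. That said, your key step is not foreign to the paper: the same Chevalley computation appears as the implication $(4) \Rightarrow (5)$ in Lemma~\ref{Lsimple}. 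Your argument is more direct and self-contained, avoiding the full machinery of $J_{\min}$ and $J_{\max}$; the paper's derivation, by contrast, situates the result as the degenerate case of a structural theorem valid for all $\lambda$.
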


In other words, if $\lambda$ is not on any simple root hyperplane, then
$\scrf^{(1)}_{\vla}$ is injective on $2^I$.
(Thus it has fiber $[J,J]$ at each subset $J$.)

The following questions naturally arise now:
(1) In what level of generality do the above results hold? In other
words, for which highest weight modules $\vla$ can the fibers of
$\scrf_{\vla}$ and $\scrf^{(1)}_{\vla}$ be related and/or classified?
(2) If the analogue of Theorem \ref{Tcm1} holds for a highest weight
module $\vla$ (i.e., the fibers of $\scrf^{(1)}_{\vla}$ are all
intervals), then is there a closed-form expression for the sets
$J_{\min}, J_{\max}$?
(3) Can the redundancies among the set of faces $\{ w(\wt_J \vla) : w \in
W_{J(\vla)}, J \subset I \}$ -- i.e., the fibers of the map
$\scrf_{\vla}$ -- be classified for all highest weight modules $\vla$?
Answers to the second question are known only for the adjoint
representation for simple $\lie{g}$ (and now when $\lambda$ avoids the
simple root hyperplanes, by our result, Theorem \ref{Tsimplyreg}).
Neither the existence of $J_{\min}, J_{\max}$, nor formulas for them, are
known for other finite-dimensional modules $M(\lambda,I) = L(\lambda)$.
Formulas in the case of general modules $\vla$ are even harder, as are
the first and third questions.

Thus, one of the main accomplishments of this paper is to provide
positive answers to all of the above questions in complete generality,
for all highest weight modules $\vla$ and all semisimple $\lie{g}$.
Remarkably, the formulas for $J_{\min}$ and $J_{\max}$ can be read off
from the Dynkin diagram of $\lie{g}$, using only the data of the sets $J$
and $J(\vla)$ as well as the support set of $\lambda$. See Theorem
\ref{T5} for a precise formulation.
We also completely classify the redundancies between faces of arbitrary
highest weight modules $\vla$; i.e., we compute the fibers of the face
map $\scrf_{\vla}$. See Proposition \ref{Pvinberg}.

Our second motivation comes from further combinatorial results shown
recently by Cellini and Marietti \cite{CM} for the root system of a
simple Lie algebra $\lie{g}$. In \cite{CM} the authors compute the
dimension and stabilizer subgroup of the faces of the root polytope of
$\lie{g}$.

\begin{theorem}[Cellini--Marietti, {\cite[Section 1]{CM}}]\label{Tcm2}
Suppose $\lie{g}$ is simple, with highest root $\theta$. Given $\lambda =
\theta$ and $J \subset I$, define $\partial J, \overline{J}, F_J$ as in
Theorem \ref{Tcm1}. Then $F_J$ has dimension $|I| - |\overline{J}|$ and
stabilizer subgroup $W_{I \setminus \partial J}$ in $W$, and its
barycenter lies in $\R_+ \Omega_{\partial J}$. Moreover, $F_J$ has $[W_{I
\setminus J} : W_{(I \setminus J) \cap \{ \theta \}^\perp}]$ vertices,
where $\{ \theta \}^\perp$ denotes the set of simple roots orthogonal to
$\theta$.
\end{theorem}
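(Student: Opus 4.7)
The plan is to deduce Theorem \ref{Tcm2} as a special case of the general framework developed later in this paper, applied to the adjoint representation $\vla = L(\theta)$.

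First I would set up the dictionary between Cellini--Marietti's notation and ours. With $K := I \setminus J$, a root $\mu \in \Phi$ satisfies the condition $(\mu, \omega_j) = m_j$ for all $j \in J$ precisely when $\theta - \mu \in \Z_+ \Delta_K$; hence $F_J = \conv_\R \wt_K L(\theta)$ is a standard parabolic face of the Weyl polytope $\calp(\theta)$. Since $\theta \in P^+$, Theorem \ref{T1} gives $J(L(\theta)) = I$, so Theorem \ref{T23} identifies $F_J$ as a genuine face of $\conv_\R \wt L(\theta)$, whose vertex set is the $W_K$-orbit of $\theta$.

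Next I would apply Theorem \ref{T5} to compute the fiber of $\scrf^{(1)}_{L(\theta)}$ over $K$, which is an interval $[K_{\min}, K_{\max}] \subset 2^I$. Comparing with the interval $[\partial J, \overline{J}]$ of Theorem \ref{Tcm1} via the involution $J' \leftrightarrow I \setminus J'$ on $2^I$ yields the identifications $\partial J = I \setminus K_{\max}$ and $\overline{J} = I \setminus K_{\min}$.

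With these identifications in hand, the remaining claims would be read off from Theorem \ref{Tfinite}: (i) the affine span of $F_J$ is $\theta + \R \Delta_{K_{\min}}$ by minimality of $K_{\min}$, so $\dim F_J = |K_{\min}| = |I| - |\overline{J}|$; (ii) the setwise stabilizer of $F_J$ in $W$ is the parabolic subgroup $W_{K_{\max}} = W_{I \setminus \partial J}$; (iii) the barycenter of $F_J$ is fixed by this stabilizer, hence lies in the fixed subspace $\R \Omega_{\partial J}$, and membership in the open cone $\R_+ \Omega_{\partial J}$ follows from the fact that $(\mu, \omega_j) = m_j > 0$ is the common extremal value for each $\mu \in F_J$ and $j \in \partial J$, together with the positive definiteness of the Gram matrix of $\{\omega_j : j \in \partial J\}$; (iv) the vertex count is $[W_K : \mathrm{Stab}_{W_K}(\theta)] = [W_{I \setminus J} : W_{(I \setminus J) \cap \{\theta\}^\perp}]$ by orbit--stabilizer, since $s_i \theta = \theta$ iff $(\theta, \alpha_i) = 0$ iff $i \in \{\theta\}^\perp$.

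The main obstacle in this proposal is not the deduction itself but the substantial preliminary work of proving Theorems \ref{T5} and \ref{Tfinite}: identifying closed-form, Dynkin-diagram-based descriptions of $K_{\min}, K_{\max}$ using only the integrability data $J(\vla)$ and $\supp(\lambda)$, and carrying out a uniform, type-free analysis of affine spans, stabilizers, and vertex orbits for standard parabolic faces of \emph{arbitrary} highest weight modules, of which the adjoint representation is a very special case.
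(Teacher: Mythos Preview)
Your overall strategy is exactly the one the paper takes: state Theorem~\ref{Tcm2} as a special case of the general Theorems~\ref{T5} and~\ref{Tfinite} (together with Lemma~\ref{Lfinite}) applied to $\vla = L(\theta)$, using the dictionary $\partial J = I \setminus K_{\max}$, $\overline{J} = I \setminus K_{\min}$ with $K = I \setminus J$ (this is precisely Proposition~\ref{Pcellini}). Parts (i), (ii), and (iv) of your deduction are correct and match the paper's.

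There is, however, a genuine gap in your argument for (iii). You correctly observe that the barycenter $b$ is fixed by $W_{K_{\max}}$, hence lies in $\R\Omega_{\partial J}$. But from $(b,\omega_j) = m_j > 0$ for $j \in \partial J$ and positive definiteness of the Gram matrix $G = ((\omega_i,\omega_j))_{i,j \in \partial J}$ you cannot conclude $b \in \R_+\Omega_{\partial J}$: writing $b = \sum c_j \omega_j$ gives $Gc = m$, and $G^{-1}$ need not preserve the positive orthant. What you actually need is $(b,\alpha_j) \geq 0$ for $j \in \partial J$, which is a different condition. The paper obtains this instead from Lemma~\ref{Lfinite}, whose proof invokes Proposition~\ref{Pstable}(1): the sum $\rho_{\wt_K L(\theta)}$ lies in $P^+_{I \setminus K}$, a representation-theoretic fact established in \cite{Khwf}. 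Replacing $K$ by $K_{\max}$ via Theorem~\ref{T5} then refines this to $\Q_+\Omega_{\partial J}$. So the positivity is not a linear-algebra consequence of the face data, but comes from the dominance property of barycenters of $W_K$-stable weight sets.
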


It is natural to ask if similar results hold for Weyl polytopes, or more
generally for all modules $\vla$. The present paper provides positive
answers for general highest weight modules $\vla$ over all semisimple
$\lie{g}$; see Theorem \ref{Tfinite} as well as Lemma \ref{Lfinite}.

\section{Main results}\label{S3}

In this section we present the main results of the paper. The following 
notation is required to state and prove these results.

\begin{defn}\label{Dminmax}
Suppose $\lambda \in \lie{h}^*$, $M(\lambda) \twoheadrightarrow \vla$,
and $J \subset I$.
\begin{enumerate}
\item Define $\pi_J : \lie{h}^* = \C \Omega_I \twoheadrightarrow \C
\Omega_J$ to be the projection map with kernel $\C \Omega_{I \setminus
J}$.

\item Denote the support of $\lambda \in \lie{h}^*$ by $\supp(\lambda) :=
\{ i \in I : (\lambda, \alpha_i) \neq 0 \} = \{ i \in I : \pi_{\{ i
\}}(\lambda) \neq 0 \}$. Thus $\lambda \in \C \Omega_{\supp(\lambda)}\
\forall \lambda \in \lie{h}^*$.

\item If $J \subset J(\vla)$, define $J_{\min}$ to be the union of the
connected components $C$ in the Dynkin diagram of $J$ such that
$\pi_C(\lambda) \neq 0$, i.e., such that $C \cap \supp(\lambda)$ is
nonempty.

\item Given $J \subset I$ and $\vla$, partition $I$ as
$I = \bigsqcup_{i=1}^6 J_i(\vla)$, where
\[ J_1(\vla) := I \setminus (J \cup J(\vla)), \qquad
J_2(\vla) := J \setminus J(\vla), \qquad
J_3(\vla) := (J \cap J(\vla))_{\min}, \]
\[ J_5(\vla) := (J \cap J(\vla)) \setminus
(J_3(\vla) \sqcup J_4(\vla)),
\qquad J_6(\vla) := J(\vla) \setminus J, \]

\noindent and $J_4(\vla)$ is the union of those connected components
$C$ of the Dynkin diagram of $J \cap J(\vla)$, for which $\pi_C(\lambda)
= 0$ but $\Delta_C \not\perp \Delta_{J_2(\vla)}$.

\item Define for any $X \subset \lie{h}^*$, the set $X^\perp := \{ i \in I
: (\alpha_i, x) = 0\ \forall x \in X \}$. Now define $J^\perp :=
\Delta_J^\perp$.
\end{enumerate}
\end{defn}

We provide some elaboration on the sets $J_i(\vla)$. These explanations
are helpful in building some intuition about the sets $J_i(\vla)$; all
reasoning is provided below, in the proof of Theorem \ref{T5}.
\begin{itemize}
\item $J_1(\vla)$ does not play any role in the set $\wt_J \vla$ for a
highest weight module $\vla$.
\item $J_2(\vla)$ consists of the indices $j \in J$ such that
$\lambda - \Z_+ \alpha_j \subset \wt_J \vla$.
\item For $l = 3,4,5$, $J_l(\vla)$ consists of the graph components $C$
of the Dynkin diagram of $J \cap J(\vla)$, for which:
(a) $\pi_C(\lambda) \neq 0$, if $l=3$;
(b) $\pi_C(\lambda) = 0$ but $(\Delta_{J \setminus J(\vla)}, \Delta_C)
\neq 0$, if $l=4$; and
(c) $\pi_C(\lambda) = 0 = (\Delta_{J \setminus J(\vla)}, \Delta_C)$, if
$l=5$.
\end{itemize}

We now state the main results of this paper. Our first result achieves
three goals: first, it provides a complete characterization of when two
standard parabolic sets of weights coincide, for all highest weight
modules $\vla$. (As explained in Remark \ref{Rcellini}, this allows for
the complete classification of inclusion relations between standard
parabolic subsets/faces.)
Next, the result also demonstrates the existence of the sets $J_{\min},
J_{\max}$ for all $\vla$. Finally, it provides the first closed-form
expressions for the sets $J_{\min}, J_{\max}$ for any highest weight
module over semisimple $\lie{g}$ (with the sole exception of $\vla =
\lie{g}$ for simple $\lie{g}$, for which a different pair of formulae
were proved very recently in \cite{CM}).

\begin{utheorem}\label{T5}
Suppose $\lambda \in \lie{h}^*$, $M(\lambda) \twoheadrightarrow \vla$,
$\vla_\lambda$ is spanned by $v_\lambda$, and $J \subset I$.
Define subsets $J_{\min}, J_{\max} \subset J(\vla)$ as follows (notation
as in Definition \ref{Dminmax}):
\begin{align}
J_{\min} = &\ J_3(\vla) \sqcup J_4(\vla)
= \bigcup_{\mu \in \{ 0 \} \sqcup \Delta_{J \setminus J(\vla)}} (J \cap
J(\vla))_3(L_{J(\vla)}(\pi_{J(\vla)}(\lambda - \mu))),\label{Emin}\\
J_{\max} = &\ (J \cap J(\vla)) \sqcup [ J_6(\vla) \cap \{ \lambda
\}^\perp \cap J_{\min}^\perp \cap J_2(\vla)^\perp]\label{Emax}\\
= &\ J_{\min} \sqcup (J(\vla) \cap \{ \lambda \}^\perp \cap
J_{\min}^\perp \cap J_2(\vla)^\perp).\notag
\end{align}

\noindent Then all equalities in \eqref{Emin} and \eqref{Emax} are valid.
Moreover, the following are equivalent for $J' \subset I$:
\begin{enumerate}
\item There exist $w,w' \in W_{J(\vla)}$ such that $w(\wt_J \vla) =
w'(\wt_{J'} \vla)$.

\item $\wt_J \vla = \wt_{J'} \vla$.

\item $\conv_\R \wt_J \vla = \conv_\R \wt_{J'} \vla$.

\item $U(\lie{g}_J) v_\lambda = U(\lie{g}_{J'}) v_\lambda$.

\item $J \setminus J(\vla) = J' \setminus J(\vla)$ and $J_{\min} \subset
J' \cap J(\vla) \subset J_{\max}$.
\end{enumerate}
\end{utheorem}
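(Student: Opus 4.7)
The plan is to take equation~\eqref{Eface}, i.e., $\wt_J \vla = \wt U(\lie{g}_J) v_\lambda$, as the fulcrum, so that the set of weights $\wt_J \vla$ is controlled entirely by the $\lie{g}_J$-submodule of $\vla$ generated by $v_\lambda$. My central goal is the equivalence $(4) \Leftrightarrow (5)$, from which $(4) \Rightarrow (2)$ is immediate and $(2) \Rightarrow (5)$ is handled separately. The equivalences involving (1) and (3) are shorter: $(2) \Leftrightarrow (3)$ uses the identity $\wt_J \vla = \conv_\R \wt_J \vla \cap \wt \vla$, which itself follows from $\Z_+ \Delta \cap \R_+ \Delta_J = \Z_+ \Delta_J$ by linear independence of simple roots; $(2) \Rightarrow (1)$ is trivial with $w = w' = 1$; and $(1) \Rightarrow (3)$ follows from Theorem~\ref{T23}, which identifies both $w(\conv_\R \wt_J \vla)$ and $w'(\conv_\R \wt_{J'} \vla)$ as the same face of $\conv_\R \wt \vla$, combined with a vertex argument using that both polyhedra contain $\lambda$ and have vertex sets of the form $W_{(\cdot) \cap J(\vla)}(\lambda)$.

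For $(5) \Rightarrow (4)$, I would interpret the partition $I = J_1(\vla) \sqcup \cdots \sqcup J_6(\vla)$ as classifying the qualitative role of each simple root in $U(\lie{g}_J) v_\lambda$, via two Lie-algebra-direct-sum reductions. First, for any graph component $C \subset J_5(\vla)$ of the Dynkin subgraph of $J \cap J(\vla)$: the hypothesis $\pi_C(\lambda) = 0$ with $C \subset J(\vla)$ forces $x^-_{\alpha_j} v_\lambda = 0$ for each $j \in C$ (by Theorem~\ref{T1}), and the orthogonality $\Delta_C \perp \Delta_{J \setminus C}$ (since $C$ is a component of $J \cap J(\vla)$ and $\Delta_C \perp \Delta_{J_2(\vla)}$ by definition of $J_5(\vla)$) realizes $\lie{g}_C$ as a Lie algebra direct summand of $\lie{g}_J$ acting trivially on $v_\lambda$. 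Hence $U(\lie{g}_J) v_\lambda = U(\lie{g}_{J \setminus C}) v_\lambda$. Second, any $i \in J_6(\vla) \cap \{\lambda\}^\perp \cap J_{\min}^\perp \cap J_2(\vla)^\perp$ satisfies $x^-_{\alpha_i} v_\lambda = 0$ and is Dynkin-disconnected from the ``active'' set $J_2(\vla) \sqcup J_{\min}$, so adjoining $i$ to $J$ preserves the submodule by the same direct summand argument. Iterating these two reductions, any $J'$ satisfying (5) yields $U(\lie{g}_{J'}) v_\lambda = U(\lie{g}_J) v_\lambda$.

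For the converse $(2) \Rightarrow (5)$, I would exhibit specific weights forcing each containment. The equality $J \setminus J(\vla) = J' \setminus J(\vla)$ comes from the fact that non-integrability at $j \in J \setminus J(\vla)$ makes $\lambda - n\alpha_j \in \wt_J \vla$ for all $n \geq 0$. The containment $J_{\min} \subset J' \cap J(\vla)$ is shown as follows: for $j$ in a component $C$ of $J_3(\vla)$ (resp.\ $J_4(\vla)$), the $\lie{g}_C$-submodule of $\vla$ generated by $v_\lambda$ (resp.\ by $x^-_{\alpha_i} v_\lambda$ for $i \in J_2(\vla)$ adjacent to $C$) is integrable, and hence the finite-dimensional simple $\lie{g}_C$-module of nonzero dominant integral highest weight $\pi_C(\lambda)$ (resp.\ $-\pi_C(\alpha_i)$); by connectedness of $C$, its $W_C$-orbit of weights affinely spans $\spa_\R \Delta_C$, producing a weight in $\wt_J \vla$ with strictly positive $\alpha_j$-coefficient and forcing $j \in J'$. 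The upper bound $J' \cap J(\vla) \subset J_{\max}$ uses a reflection argument: for $k \in J' \cap J_6(\vla)$, since $s_k \in W_{J(\vla)}$ preserves $\wt \vla$ and $k \notin J$, the condition $s_k(\mu) \in \wt_J \vla \subset \lambda - \Z_+ \Delta_J$ forces the $\alpha_k$-coefficient of $\lambda - s_k(\mu)$ to vanish, i.e., $\mu(h_k) = 0$ for every $\mu \in \wt_J \vla$. Specializing $\mu$ to $\lambda$, to $\lambda - \alpha_i$ for $i \in J_2(\vla)$, and to the spanning weights just constructed for the components of $J_{\min}$ yields the three orthogonality conditions $k \in \{\lambda\}^\perp \cap J_2(\vla)^\perp \cap J_{\min}^\perp$.

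The alternate formulae in \eqref{Emin} and \eqref{Emax} are verified by set-theoretic unpacking of the $J_i(\vla)$. For $J_{\min}$, the index $\mu = 0$ picks out the $J_3(\vla)$ components (those with $\pi_C(\lambda) \neq 0$), while $\mu = \alpha_i$ ranging over $\Delta_{J_2(\vla)}$ picks out the $J_4(\vla)$ components via $(\lambda - \alpha_i)(h_k) = -a_{ki} \neq 0$ at some $k \in C$ adjacent to $i$. For $J_{\max}$, the second expression follows from the first via $J(\vla) = J_{\min} \sqcup J_5(\vla) \sqcup J_6(\vla)$, together with the observation that $J_5(\vla)$ is automatically contained in $\{\lambda\}^\perp \cap J_{\min}^\perp \cap J_2(\vla)^\perp$. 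The principal obstacle is the $(2) \Rightarrow (5)$ step: for $J_4(\vla)$ one must identify the correct shifted $\lie{g}_C$-highest weight $-\pi_C(\alpha_i)$ and verify that the associated simple module sits inside $U(\lie{g}_J) v_\lambda$ with full-rank weight support, and the reflection argument for $J_{\max}$ requires a family of weights rich enough to span $\Delta_{J_{\min}}$ in the Cartan dual.
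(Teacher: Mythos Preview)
Your treatment of the equivalences among $(2)$, $(3)$, $(4)$, and $(5)$ is broadly sound and close in spirit to the paper's argument. The direct Lie-algebra-summand route to $(5) \Rightarrow (4)$ is a mild reorganization of the paper's approach (which instead proves $(5) \Rightarrow (2)$ via Lemma~\ref{Lweyl} and then deduces $(4)$ from $(2)$ by a weight-support argument), but the underlying mechanism---that $J_5(\vla)$ and the ``extra'' nodes in $J_{\max} \setminus (J \cap J(\vla))$ are orthogonal to $J_2(\vla) \sqcup J_{\min}$ and generate the trivial $\lie{g}_C$-module on $v_\lambda$---is the same. Your reflection argument for the upper bound in $(2) \Rightarrow (5)$ is also essentially the paper's, though you should note that deducing $k \in J_{\min}^\perp$ from $(\mu,\alpha_k)=0$ for all $\mu \in \wt_J \vla$ requires Lemma~\ref{Lweights} to break $\lambda - \mu$ into single-root steps.

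The genuine gap is your implication $(1) \Rightarrow (3)$. You invoke Theorem~\ref{T23}, but that result carries hypotheses on $\vla$ (conditions (a)--(d) there), whereas Theorem~\ref{T5} is stated for \emph{arbitrary} highest weight modules. For a general $\vla$ one does not know that $\conv_\R \wt \vla$ is a polyhedron, nor that its faces are exactly the $W_{J(\vla)}$-translates of standard parabolic faces, nor that the vertex set of $\conv_\R \wt_J \vla$ is $W_{J \cap J(\vla)}(\lambda)$; indeed the latter statement in Theorem~\ref{Tfinite} is proved only under the assumption $\conv_\R \wt \vla = \conv_\R \wt M(\lambda,J(\vla))$. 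Even granting all of that, your ``vertex argument'' is not complete: from $w^{-1}w'(W_{J'\cap J(\vla)}(\lambda)) = W_{J\cap J(\vla)}(\lambda)$ one cannot directly conclude $\wt_J \vla = \wt_{J'} \vla$ without further work.

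The paper's proof of $(1) \Rightarrow (2)$ is considerably more involved and does not use Theorem~\ref{T23} at all. First it extracts $J \setminus J(\vla) = J' \setminus J(\vla)$ by a root-system argument showing $w^{-1}w'(\alpha_j) \in \Phi_{J \cup J(\vla)}$ for $j \in J' \setminus J(\vla)$. The heart of the matter is then the introduction of the finite $W_{J(\vla)}$-stable subsets $T^\mu := \wt L_{J(\vla)}(\lambda - \mu) \subset \wt \vla$ for $\mu \in Q^+_{I \setminus J(\vla)}$: intersecting the hypothesis $w^{-1}w'(\wt_{J'}\vla) = \wt_J \vla$ with each $T^\mu$ and transferring via $\varpi_{J(\vla)}$ to the finite-dimensional $\lie{g}_{J(\vla)}$-module $L_{J(\vla)}(\pi_{J(\vla)}(\lambda-\mu))$, one gets that $w^{-1}w'$ sends one dominant barycenter $\rho_{\wt_{J'\cap J(\vla)}M^\mu}$ to another, forcing them equal (uniqueness of the dominant element in a Weyl orbit), hence $\wt_{J\cap J(\vla)} M^\mu = \wt_{J'\cap J(\vla)} M^\mu$ by Proposition~\ref{Pstable}. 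Running this over $\mu \in \{0\} \sqcup \Delta_{J_2(\vla)}$ recovers $J_{\min} \subset J' \cap J(\vla)$, and then the already-established $(5) \Rightarrow (2)$ closes the loop. This $T^\mu$ machinery is the missing idea in your proposal.
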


In particular, there are three formulas for $J_{\min}$ when $J \subset
J(\vla)$ (from Definition \ref{Dminmax} and equation \eqref{Emin}), and
we show in this paper that these formulas agree, as do the two proposed
formulas for $J_{\max}$.
Moreover, Theorems \ref{Tvin}, \ref{Tcm1}, \ref{Tsimplyreg} respectively
by Vinberg (and others), Cellini--Marietti, and us, are the special cases
of Theorem \ref{T5} that were previously known. The second of these
assertions holds because $F_J = \conv_\R \wt_{I \setminus J} \lie{g}$ for
simple $\lie{g}$, as explained in Section \ref{Scellini} below. The third
assertion holds because if $\lambda(h_i) \neq 0$ for all $i \in I$, then
$J_4(\vla) = J_5(\vla) = \{ \lambda \}^\perp = \emptyset$ for all $\vla$,
whence $J_{\min} = J_{\max} = J_3(\vla) = J \cap J(\vla)$.

Further note from Theorem \ref{T5} that the last expression in equation
\eqref{Emin} reduces the situation to the finite-dimensional setting of
the ``integrable top'' $L_{J(\vla)}(\lambda - \mu)$. In fact the set
corresponding to $\mu = 0$ in \eqref{Emin} is contained in the set
corresponding to $\mu = \alpha_{j_2}$ for every $j_2 \in J_2(\vla)$, but
it is nevertheless included in \eqref{Emin} (and also in the results and
proofs in the paper below) because $J_2(\vla)$ may be empty, e.g.~in the
case of finite-dimensional modules $\vla$.

\begin{remark}
As an aside, Theorem \ref{T5} answers the following branching-type
question: given a finite-dimensional simple $\lie{g}$-module
$L(\lambda)$, for which subsets $J \subset I$ is its restriction to
$\lie{g}_J$ still an irreducible $\lie{g}_J$-module? By Theorem \ref{T5},
the answer is: all $J$ containing $I_{\min}$. More generally for all
$\lambda \in \lie{h}^*$, the answer to the same question (for
$L(\lambda)$) is: all subsets $J$ containing $I_{\min} \sqcup (I
\setminus J_\lambda)$.
\end{remark}

\begin{remark}
It is preferable to work with arbitrary semisimple $\lie{g}$ rather than
simple Lie algebras. One reason is that when $J \subset I$ is allowed to
be arbitrary, disjoint connected graph components of $J \cap J(\vla)$
automatically force us to work with semisimple subalgebras of $\lie{g}$.
\end{remark}

Our next main result establishes several combinatorial facts about
standard parabolic sets of weights $\wt_J \vla$ for highest weight
modules $\vla$.

\begin{utheorem}\label{Tfinite}
Suppose $\lambda \in \lie{h}^*$, $M(\lambda) \twoheadrightarrow \vla$,
and $J \subset I$. Then the standard parabolic face $\conv_\R (\wt_J
\vla)$ has:
\begin{enumerate}
\item dimension $|J_{\min}| + |J \setminus J(\vla)|$ as well as affine
hull $\lambda - \R \Delta_{J_{\min} \sqcup (J \setminus J(\vla))}$ and

\item stabilizer subgroup $W_{J_{\max}} = W_{J_{\min}} \times W_{J_{\max}
\setminus J_{\min}}$ in $W_{J(\vla)}$.
\end{enumerate}

\noindent Now assume further that $\conv_\R \wt \vla = \conv_\R \wt
M(\lambda, J(\vla))$. Then the face $\conv_\R \wt_J \vla$ has $[W_{J \cap
J(\vla)} : W_{J \cap J(\vla) \cap \{ \lambda \}^\perp}]$ vertices.
Moreover, the polyhedron $\conv_\R \wt \vla$ has $f$-polynomial
\[ {\bf f}_{\vla}(t) = \sum_J [W_{J(\vla)} : W_{J_{\max}}] t^{|J_{\min}|
+ |J \setminus J(\vla)|}, \]

\noindent where we sum over the distinct elements $J$ in the multiset $\{
J_{\max} : J \subset I \}$, or in the multiset $\{ J_{\min} : J \subset I
\}$.
\end{utheorem}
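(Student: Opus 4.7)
The plan is to use Theorem~\ref{T5} to replace $J$ by the maximum representative $J_{\max} \sqcup (J \setminus J(\vla))$ within its equivalence class, and then combine the integrability statements of Theorem~\ref{T1} with the polytope data of Theorem~\ref{T23} to prove each of the four assertions. The bulk of the work consists of dissecting $U(\lie{g}_J) v_\lambda \subseteq \vla$ along the partition $J = J_{\min} \sqcup J_5(\vla) \sqcup J_2(\vla)$, where $J_2(\vla) := J \setminus J(\vla)$.

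\textbf{Dimension, affine hull, and stabilizer.} For the containment $\conv_\R \wt_J \vla \subseteq \lambda - \R\Delta_{J_{\min} \sqcup J_2(\vla)}$, I would argue that each connected component $C$ of $J_5(\vla)$ contributes nothing. Since $\Delta_C \perp \Delta_{J_2(\vla)}$ and $C$ is a connected component of $J \cap J(\vla)$, no Dynkin edge joins $C$ with $J \setminus C$; hence $\lie{g}_C$ commutes with $\lie{g}_{J \setminus C}$, and the conditions $\pi_C(\lambda) = 0$ and $C \subseteq J(\vla)$ (which force $x^\pm_{\alpha_c} v_\lambda = 0$ by Theorem~\ref{T1}) yield $U(\lie{g}_J) v_\lambda = U(\lie{g}_{J \setminus J_5(\vla)}) v_\lambda$. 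For the reverse inclusion of affine hulls, I would realize each simple root direction explicitly: directions from $J_2(\vla)$ via the rays $\lambda - \Z_+ \alpha_j \subseteq \wt \vla$ guaranteed by non-integrability and Theorem~\ref{T1}; directions from a $J_3(\vla)$-component $C$ via iterated reflections inside the non-trivial finite-dimensional simple $\lie{g}_C$-module $U(\lie{g}_C) v_\lambda$, using connectivity of $C$; and directions from a $J_4(\vla)$-component $C$ by descending $\lambda$ along $\alpha_k$ (for some $k \in J_2(\vla)$ with $(\alpha_k,\alpha_c) \neq 0$, $c \in C$) far enough to produce an $\alpha_c$-dominant integer weight, then applying $x^-_{\alpha_c}$ and propagating through $C$. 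The stabilizer containment $W_{J_{\max}} \subseteq \mathrm{Stab}_{W_{J(\vla)}}(\conv_\R \wt_J \vla)$ follows because $\wt_{J_{\max}} \vla = \wt_J \vla$ by Theorem~\ref{T5} and is $W_{J_{\max}}$-stable by Theorem~\ref{T1}(c), since $J_{\max} \subseteq J(\vla)$; the product decomposition $W_{J_{\min}} \times W_{J_{\max} \setminus J_{\min}}$ is the orthogonality $\Delta_{J_{\min}} \perp \Delta_{J_{\max} \setminus J_{\min}}$ built into the second expression for $J_{\max}$ in Theorem~\ref{T5}. For the reverse containment, I would show by case analysis that for each $j \in J(\vla) \setminus J_{\max}$ the reflection $s_j$ moves some weight out of $\wt_J \vla$: if $\lambda(h_j) \neq 0$ then $s_j(\lambda) = \lambda - \lambda(h_j)\alpha_j$ carries a forbidden $\alpha_j$-component, as $j \notin J \cap J(\vla) \subseteq J_{\max}$; and if $\lambda(h_j) = 0$, the formula for $J_{\max}$ forces $(\alpha_j, \alpha_i) \neq 0$ for some $i \in J_{\min} \sqcup J_2(\vla)$, so applying $s_j$ to a weight with nontrivial $\alpha_i$-coefficient produces a forbidden $\alpha_j$ term.

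\textbf{Vertex count, $f$-polynomial, and main obstacle.} Under the hypothesis $\conv_\R \wt \vla = \conv_\R \wt M(\lambda, J(\vla))$, Theorem~\ref{T23} yields vertex set $W_{J(\vla)}(\lambda)$. A standard dominance argument (using $\lambda(h_j) \in \Z_+$ for $j \in J(\vla)$) together with a parabolic decomposition of $W_{J(\vla)}$ identifies the vertices lying on the face $\conv_\R \wt_J \vla = \conv_\R \wt_{J_{\max}}\vla$ with the orbit $W_{J \cap J(\vla)}(\lambda)$, of cardinality $[W_{J \cap J(\vla)} : W_{J \cap J(\vla) \cap \{\lambda\}^\perp}]$. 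For the $f$-polynomial, every face of $\conv_\R \wt \vla$ is $W_{J(\vla)}$-conjugate to a unique standard parabolic face by Theorem~\ref{T23}; distinct standard parabolic faces are in bijection with distinct canonical labels $J_{\max} \sqcup (J \setminus J(\vla))$ by Theorem~\ref{T5}; each such face has dimension $|J_{\min}| + |J \setminus J(\vla)|$ by the previous step and Weyl orbit of size $[W_{J(\vla)} : W_{J_{\max}}]$ by the stabilizer calculation. Summing the orbit sizes against $t^{|J_{\min}| + |J \setminus J(\vla)|}$ gives the claimed expression for $\mathbf{f}_{\vla}(t)$. The main obstacle I anticipate is the reverse affine-hull inclusion for $J_4(\vla)$-components: one must interleave non-integrable descending steps along $J_2(\vla)$ with integrable reflections in $J(\vla)$, and verify at each stage that the produced elements are non-zero in $U(\lie{g}_J) v_\lambda$ and satisfy the $\Z_+ \Delta_J$-support constraint defining $\wt_J \vla$.
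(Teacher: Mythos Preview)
Your treatment of part~(1) and of the vertex count and $f$-polynomial is close to the paper's, with cosmetic differences (e.g., you realize the $J_4$-directions by descending along $\alpha_k$ and then reflecting, while the paper packages this as $\wt L_{J(\vla)}(\lambda - \alpha_{j_2}) \subset \wt \vla$ and invokes the structure of that finite-dimensional module). These are equivalent.

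The genuine gap is in part~(2), the reverse stabilizer containment. You propose to show that for each $j \in J(\vla) \setminus J_{\max}$ the simple reflection $s_j$ moves some weight out of $\wt_J \vla$. That case analysis is correct as far as it goes, but it only shows that no such $s_j$ lies in the stabilizer $W' := \mathrm{Stab}_{W_{J(\vla)}}(\wt_J \vla)$. It does \emph{not} show $W' \subset W_{J_{\max}}$: an arbitrary stabilizer of a subset of $\lie{h}^*$ need not be a standard parabolic subgroup, so knowing which simple reflections lie in $W'$ does not determine $W'$. You are implicitly assuming $W'$ is generated by the simple reflections it contains, and that requires proof.

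The paper supplies exactly this missing ingredient, and it is the heart of the argument. First, for $J \subset J(\vla)$ (Proposition~\ref{Pfinite}(2)) one passes to the barycenter $\avg(\wt_J \vla)$, shows via Proposition~\ref{Pstable} and Lemma~\ref{Lfinite} that its $\pi_{J(\vla)}$-projection is dominant for $\lie{g}_{J(\vla)}$, and then invokes the standard Bourbaki fact that the stabilizer of a dominant point is the parabolic subgroup generated by the simple reflections fixing it. For general $J$, the paper intersects $\wt_J \vla$ with the finite $W_{J(\vla)}$-stable sets $T^\mu = \wt L_{J(\vla)}(\lambda - \mu)$ for $\mu \in \{0\} \sqcup \Delta_{J_2(\vla)}$, transfers via $\varpi_{J(\vla)}$ to finite-dimensional $\lie{g}_{J(\vla)}$-modules, applies the barycenter argument there to get parabolic stabilizers $W_{\widetilde{J}^\mu_{\max}}$, and finally shows $\bigcap_\mu W_{\widetilde{J}^\mu_{\max}} \subset W_{J_{\max}}$ by a case analysis similar in spirit to yours. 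Without some such reduction to a dominant-point stabilizer, your argument for part~(2) is incomplete.
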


\begin{remark}
Note from Theorem \ref{Tfinite} that much of the convexity-theoretic data
of a highest weight module $\vla$ is completely determined by the sets
$\{ J_{\min}, J_{\max} : J \subset I \}$. (This includes $I_{\max} =
J(\vla)$.)
\end{remark}

The assertions in Theorem \ref{Tfinite} were established in the special
case $\lambda = \theta$ and $\vla = M(\theta,I) = L(\theta) = \lie{g}$
for a simple Lie algebra $\lie{g}$, by Cellini and Marietti in their
recent paper \cite{CM} (see also \cite{ABHPS} for explicit formulas for
$f$-polynomials for all simply laced root polytopes).
The assertions appear for $\vla = \lie{g}$ in \cite{CM} as their ``main
results'' Theorem 1.2(1), Theorem 1.2(2), Theorem 1.1(2), and Theorem 1.3
respectively.
Theorem \ref{Tfinite} now shows that these results hold for a very large
family of highest weight modules $\vla$ (some of them hold for all
$\vla$) and for all semisimple Lie algebras; see Remark \ref{R23}.
Additionally, in this paper we show several other related statements,
which specialize to many of the results in \cite{CM} in the special case
$\lambda = \theta, \vla = L(\theta) = \lie{g}$. See Section
\ref{Scellini} for more details. Note that these results are not known
for any other highest weight module.

\begin{remark}\label{R23}
We briefly remark on the assumption in the last two parts of Theorem
\ref{Tfinite}:
\begin{equation}\label{E23}
\conv_\R \wt \vla = \conv_\R \wt M(\lambda,J(\vla)).
\end{equation}

\noindent We show in Proposition \ref{P23} that equation \eqref{E23}
holds under any of the four hypotheses on $\vla$ that are listed in
Theorem \ref{T23}. Thus Theorem \ref{Tfinite} holds for a very large
class of highest weight modules.
\end{remark}

The third result in this section involves obtaining a minimal half-space
representation for the convex polyhedron $\conv_\R \wt \vla$. It is known
for the adjoint representation (see \cite{CM}) and is not hard to prove
for Verma modules, but it is not known for any nonzero module $\vla \neq
\lie{g}$.

\begin{utheorem}\label{Thalfspace}
Suppose $\lambda \in \lie{h}^*$ and $M(\lambda) \twoheadrightarrow \vla$,
and assume $\conv_\R \wt \vla = \conv_\R \wt M(\lambda, J(\vla))$.
Then the convex polyhedron $\conv_\R \wt \vla$ is represented as the
intersection of the half-spaces
\[ H_{i,w} := \{ \mu \in \lambda - \R \Delta : (w\lambda - \mu,
w\omega_i) \geq 0 \}, \qquad \forall w \in W^i,\ i \in I, \]

\noindent where $W^i$ is any set of coset representatives of $W_{J(\vla)}
/ W_{(I \setminus \{ i \})_{\max}}$. Moreover, the representation is
minimal if one runs over only the simple roots $i \in I_{\min} \sqcup (I
\setminus J(\vla))$ such that $(I \setminus \{ i \})_{\min} = I_{\min}
\setminus \{ i \}$.
\end{utheorem}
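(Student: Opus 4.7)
The plan is to reduce to the parabolic Verma case $\vla = M(\lambda, J(\vla))$ via the hypothesis $\conv_\R \wt \vla = \conv_\R \wt M(\lambda, J(\vla))$, establish the half-space identity there by $W_{J(\vla)}$-averaging the elementary Verma polyhedron $\lambda - \R_+ \Delta = \conv_\R \wt M(\lambda)$, and then extract the minimality claim from Theorem \ref{Tfinite}, which already pins down the dimensions and $W_{J(\vla)}$-stabilizers of standard parabolic faces.

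First I would establish the containment $\conv_\R \wt \vla \subseteq \bigcap_{i,w} H_{i,w}$. Given $\mu \in \wt \vla$ and $w \in W_{J(\vla)}$, Theorem \ref{T1}(d) ensures $w^{-1}\mu \in \wt \vla$; since $\wt \vla \subseteq \lambda - \Z_+ \Delta$, this forces $\lambda - w^{-1}\mu \in \R_+ \Delta$, and pairing with $\omega_i$ together with $W$-invariance of $(\,,\,)$ produces the inequality that defines $H_{i,w}$. For the reverse containment, I would prove the intermediate identity
\[ \conv_\R \wt M(\lambda, J(\vla)) = \bigcap_{w \in W_{J(\vla)}} w\bigl( \lambda - \R_+ \Delta \bigr). \]
The inclusion $\subseteq$ follows from the $W_{J(\vla)}$-invariance of $\wt M(\lambda, J(\vla))$ (Theorem \ref{T23}) together with $\conv_\R \wt M(\lambda) = \lambda - \R_+ \Delta$. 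The reverse inclusion $\supseteq$ is a Vinberg-style argument: combining the classical polytope description of $\conv_\R W_{J(\vla)}(\lambda)$ for the finite-dimensional Levi module $L_{J(\vla)}(\lambda)$ with the recession cone $-\R_+(\Phi^+ \setminus \Phi^+_{J(\vla)})$ of the parabolic Verma polyhedron, one decomposes any point of the intersection as a convex combination of vertices of $W_{J(\vla)}(\lambda)$ plus a recession-cone vector.

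Next I would interpret each supporting half-space geometrically. Equation \eqref{Eface} shows that the functional $\omega_i$ is maximized on $\wt \vla$ precisely on the standard parabolic subset $\wt_{I \setminus \{i\}} \vla$; applying $w \in W_{J(\vla)}$ shows that the face of $\conv_\R \wt \vla$ cut by $H_{i,w}$ is $w \cdot \conv_\R \wt_{I \setminus \{i\}} \vla$. Theorem \ref{Tfinite}(2) identifies the $W_{J(\vla)}$-stabilizer of this face as $W_{(I \setminus \{i\})_{\max}}$, so $H_{i,w} = H_{i,w'}$ iff $w^{-1}w' \in W_{(I \setminus \{i\})_{\max}}$. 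Consequently, the coset representatives $W^i$ enumerate the distinct supporting half-spaces for fixed $i$ exactly once.

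For the minimality statement, I would test when $H_{i,w}$ genuinely defines a facet of $\conv_\R \wt \vla$. By Theorem \ref{Tfinite}(1) this is the dimensional condition
\[ |(I \setminus \{i\})_{\min}| + |(I \setminus \{i\}) \setminus J(\vla)| = |I| - 1. \]
A case split on whether $i \in J(\vla)$ or $i \in I \setminus J(\vla)$, combined with the explicit recipe for $J_{\min}$ in Definition \ref{Dminmax}(3), reduces this equation to the stated combinatorial condition: $i$ lies in $I_{\min} \sqcup (I \setminus J(\vla))$, and $(I \setminus \{i\})_{\min} = I_{\min} \setminus \{i\}$. The main obstacle will be precisely this combinatorial bookkeeping: one must carefully track which connected components of $J(\vla) \setminus \{i\}$ or of $(I \setminus J(\vla)) \setminus \{i\}$ pick up or lose support (from $\lambda$, or from adjacency to $J_2(\vla)$-nodes) upon deleting the node $\alpha_i$. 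The second condition $(I \setminus \{i\})_{\min} = I_{\min} \setminus \{i\}$ is exactly what prevents the would-be facet from collapsing to a lower-dimensional standard parabolic face once $\alpha_i$ is removed.
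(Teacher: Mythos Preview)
Your proposal is broadly sound, but it takes a genuinely different route from the paper's proof, and one step is sketched rather than justified.

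The paper does \emph{not} establish the reverse containment $\bigcap_{i,w} H_{i,w} \subseteq \conv_\R \wt \vla$ directly. Instead, having noted the easy forward containment, it invokes the general convex-geometry principle that a polyhedron equals the intersection of the half-spaces attached to its codimension-one facets. The hypothesis $\conv_\R \wt \vla = \conv_\R \wt M(\lambda, J(\vla))$ guarantees (via Proposition~\ref{P23}) that we are dealing with a polyhedron. Then Theorem~\ref{T23} says every face is a $W_{J(\vla)}$-translate of some standard parabolic face, and Proposition~\ref{Pfacet} (proved independently) characterizes exactly which $i \in I$ give codimension-one faces $\conv_\R \wt_{I \setminus \{i\}} \vla$. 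This yields both the half-space identity and the minimality claim in one stroke, with no direct Vinberg-style decomposition needed.

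Your approach, by contrast, tries to prove the identity $\conv_\R \wt M(\lambda, J(\vla)) = \bigcap_{w \in W_{J(\vla)}} w(\lambda - \R_+\Delta)$ head-on, and then separately argue minimality via the dimension formula of Theorem~\ref{Tfinite}(1). This is a legitimate strategy, but the ``Vinberg-style argument'' you allude to for the reverse inclusion is not spelled out: you would need to show that any point of the intersection decomposes as (convex combination of $W_{J(\vla)}(\lambda)$) $+$ (element of the recession cone), and this in turn requires either the half-space description of the Levi Weyl polytope $\conv_\R W_{J(\vla)}(\lambda)$ (which is essentially the finite-dimensional case of the theorem) or the structure results from \cite{Khwf}. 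So your argument is not wrong, but it front-loads work that the paper sidesteps by appealing to Proposition~\ref{Pfacet}. Similarly, your combinatorial bookkeeping for minimality amounts to reproving the equivalence $(1) \Leftrightarrow (4)$ of Proposition~\ref{Pfacet}; the paper simply cites that proposition.

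In short: both approaches are valid. The paper's is shorter because it packages the facet characterization into a separate proposition and then uses the minimal-half-space-representation principle to get equality for free; yours is more self-contained but leaves the key geometric step (the reverse inclusion) as a sketch that would need to be filled in.
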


We prove additional results in this paper, for instance involving
\textit{longest weights}, which are an analogue of long roots in the
adjoint representation. We also show characterizations of when a weight
is minimal in a standard parabolic subset, or when a standard
parabolic face is a facet.

\begin{remark}\label{Rminmax}
Note that analyzing arbitrary modules $\vla$ and their weights is far
more involved than previous work in the literature on root/Weyl
polytopes. We now discuss some of the technical difficulties that arise
when studying infinite-dimensional highest weight modules $\vla$, but do
not arise for finite-dimensional modules and Weyl polytopes. One such
complication is that the integrability and $W$-invariance properties of
Weyl polytopes do not hold for $\conv_\R \wt \vla$ for
infinite-dimensional modules $\vla$.
Thus it is not readily apparent how to extend results that are known only
for the adjoint representation, or even for all finite-dimensional
modules, to all highest weight modules.

A second, more subtle distinction is that the set $J(\vla)$ equals all of
$I$ for finite-dimensional modules $\vla$ (in fact this is a
characterization of finite-dimensionality). In particular, it is easy to
compute $J(\vla)_{\min}$ for finite-dimensional modules $\vla$: it equals
precisely $I_{\min} = I$ if $\lie{g}$ is simple and $\lambda \neq 0$, as
in previous papers \cite{CM,CDR,CG2} by Cellini, Chari, and their
coauthors. (Clearly $J(\vla)_{\max} = I_{\max} = I$ for all semisimple
$\lie{g}$ and finite-dimensional modules $\vla$.)
In contrast, we work with all highest weight modules over an arbitrary
semisimple Lie algebra. The analysis now is more delicate as one has to
account for the ``non-integrable directions'' $I \setminus J(\vla)$.
\end{remark}

We end by generalizing and completing an analysis initiated by Satake,
Borel--Tits, Vinberg, and Casselman for Weyl polytopes. Recall by Theorem
\ref{T23} that every face of a Weyl polytope, or more generally of
$\conv_\R \wt \vla$ for a very large class of highest weight modules
$\vla$, is of the form $\conv_\R w(\wt_J \vla)$ for $w \in W_{J(\vla)}$
and $J \subset I$. We now completely enumerate the redundancies between
these faces, as well as all inclusions of standard parabolic faces, for
arbitrary highest weight modules $\vla$.

\begin{prop}\label{Pvinberg}
Fix $\lambda \in \lie{h}^*$, $M(\lambda) \twoheadrightarrow \vla$, and $J
\subset I$. Now given $w,w' \in W_{J(\vla)}$ and $J' \subset I$,
\begin{align}\label{Evinberg}
&\ \conv_\R w(\wt_J \vla) = \conv_\R w'(\wt_{J'} \vla) \
\Longleftrightarrow \ w(\wt_J \vla) = w'(\wt_{J'} \vla) \notag\\
\Longleftrightarrow \quad &\ J' \setminus J(\vla) = J \setminus
J(\vla), \quad J_{\min} \subset J' \cap J(\vla) \subset J_{\max}, \quad
w^{-1} w' \in W_{J_{\max}}.
\end{align}

\noindent If $\vla_\lambda = \C v_\lambda$, the following are also
equivalent:
(a) $\wt_J \vla \subset \wt_{J'} \vla$;\break
(b) $\conv_\R \wt_J \vla \subset \conv_\R \wt_{J'} \vla$;
(c) $U(\lie{g}_J) v_\lambda \subset U(\lie{g}_{J'}) v_\lambda$;
(d) $J \setminus J(\vla) \subset J' \setminus J(\vla)$
and $J_{\min} \subset J'_{\min}$.
\end{prop}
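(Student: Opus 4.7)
The plan is to first prove a structural lemma that lets me pass freely between weight sets and their convex hulls, then to deduce equation \eqref{Evinberg} directly from Theorems \ref{T5} and \ref{Tfinite}, and finally to handle the inclusion chain (a)--(d) by reducing it to Theorem \ref{T5} via an intersection trick and a Dynkin-diagram propagation argument.

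The structural lemma I would establish at the outset is that
\[
w(\wt_J \vla) \;=\; \conv_\R w(\wt_J \vla) \cap \wt \vla
\qquad \forall\, w \in W_{J(\vla)},\ J \subset I.
\]
By Theorem \ref{T1}, $w$ stabilizes $\wt \vla$, so it suffices to treat $w = 1$; any $\mu \in \conv_\R \wt_J \vla \cap \wt \vla$ admits two expressions, as $\lambda - \sum_{j \in J} c_j \alpha_j$ with $c_j \in \R_+$ and as $\lambda - \sum_{i \in I} n_i \alpha_i$ with $n_i \in \Z_+$, and linear independence of the simple roots forces $c_j = n_j \in \Z_+$ and $n_i = 0$ for $i \notin J$, so $\mu \in \wt_J \vla$. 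With this lemma, the first equivalence in \eqref{Evinberg} drops out by intersecting with $\wt \vla$. For the second, given $w(\wt_J \vla) = w'(\wt_{J'} \vla)$, I would invoke Theorem \ref{T5} (equivalence of (1), (2), (5)) to obtain $\wt_J \vla = \wt_{J'} \vla$ together with the stated conditions on $J \setminus J(\vla)$ and $J \cap J(\vla)$; then $w^{-1}w'$ stabilizes $\wt_J \vla$, which by Theorem \ref{Tfinite}(2) places it in $W_{J_{\max}}$. Conversely, these two theorems give the reverse direction.

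For the equivalences (a)--(d), I would proceed in the order (b) $\Leftrightarrow$ (a) $\Leftrightarrow$ (d), then (a) $\Leftrightarrow$ (c). The step (b) $\Leftrightarrow$ (a) is immediate from the lemma. For (a) $\Leftrightarrow$ (d), the guiding observation is that $\wt_J \vla \subset \wt_{J'} \vla$ is equivalent to $\wt_J \vla = \wt_{J \cap J'} \vla$: writing any $\mu \in \wt_J \vla$ as $\lambda$ minus a $\Z_+$-combination of both $\Delta_J$ and $\Delta_{J'}$ forces support in $J \cap J'$. Applying Theorem \ref{T5} to the pair $(J, J \cap J')$, and using the ever-true inclusion $J \cap J(\vla) \subset J_{\max}$ to dispose of the upper bound, reduces (a) to the two conditions $J \setminus J(\vla) \subset J' \setminus J(\vla)$ and $J_{\min} \subset J'$. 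Since $J'_{\min} \subset J'$, the equivalence with (d) reduces to showing $J_{\min} \subset J' \Rightarrow J_{\min} \subset J'_{\min}$ under these hypotheses, which I expect to be the main obstacle. This I would carry out on the Dynkin diagram: for $\beta \in J_{\min}$, its entire connected component $C$ in $J \cap J(\vla)$ lies in $J_{\min} \subset J' \cap J(\vla)$, so $C$ sits inside a single connected component $C^*$ of $J' \cap J(\vla)$. When $C \subset J_3(\vla)$, the support condition $C \cap \supp(\lambda) \neq \emptyset$ at once gives $C^* \cap \supp(\lambda) \neq \emptyset$, placing $C^*$ in $J'_3(\vla)$; when $C \subset J_4(\vla)$ and $\pi_{C^*}(\lambda) \neq 0$, the previous case applies; otherwise $\pi_{C^*}(\lambda) = 0$ and the non-orthogonality $\Delta_C \not\perp \Delta_{J \setminus J(\vla)}$ propagates to $\Delta_{C^*} \not\perp \Delta_{J' \setminus J(\vla)}$ using the first condition of (d), placing $C^*$ in $J'_4(\vla)$. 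In every case $C^* \subset J'_{\min}$. Finally, (a) $\Leftrightarrow$ (c) follows from (a) $\Leftrightarrow$ (d) together with Theorem \ref{T5}(4)$\Leftrightarrow$(5): the minimal fiber representatives $J_{\min} \sqcup (J \setminus J(\vla))$ and $J'_{\min} \sqcup (J' \setminus J(\vla))$ are $\subset$-comparable under (d), yielding the chain $U(\lie{g}_J)v_\lambda = U(\lie{g}_{J_{\min} \sqcup (J \setminus J(\vla))})v_\lambda \subset U(\lie{g}_{J'_{\min} \sqcup (J' \setminus J(\vla))})v_\lambda = U(\lie{g}_{J'})v_\lambda$, while (c) $\Rightarrow$ (a) is immediate from $\wt_J \vla = \wt U(\lie{g}_J)v_\lambda$ in \eqref{Eface}.
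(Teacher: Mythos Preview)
Your proof is correct. For equation \eqref{Evinberg} your argument matches the paper's: both establish the structural lemma $w(\wt_J \vla) = \conv_\R w(\wt_J \vla) \cap \wt \vla$, then invoke Theorem \ref{T5} for the conditions on $J,J'$ and Theorem \ref{Tfinite}(2) for the stabilizer membership $w^{-1}w' \in W_{J_{\max}}$.

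For the equivalences (a)--(d) you take a genuinely different route from the paper. The paper's (very terse) argument uses Theorem \ref{Tfinite}(1): from $\conv_\R \wt_J \vla \subset \conv_\R \wt_{J'} \vla$ one passes to the affine hulls $\lambda - \R\Delta_{J_{\min} \sqcup (J \setminus J(\vla))} \subset \lambda - \R\Delta_{J'_{\min} \sqcup (J' \setminus J(\vla))}$, and since $J_{\min}, J'_{\min} \subset J(\vla)$ while the complements are disjoint from $J(\vla)$, condition (d) drops out immediately; the remaining links to (c) come, as in your argument, from the claim $U(\lie{g}_J)v_\lambda = U(\lie{g}_{J_{\min} \sqcup (J \setminus J(\vla))})v_\lambda$ inside the proof of Theorem \ref{T5}. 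Your approach instead avoids Theorem \ref{Tfinite} entirely for this part: the intersection trick $\wt_J \vla \subset \wt_{J'} \vla \Leftrightarrow \wt_J \vla = \wt_{J \cap J'} \vla$ reduces (a) directly to Theorem \ref{T5}(5), yielding $J_{\min} \subset J'$, and then your Dynkin-diagram propagation upgrades this to $J_{\min} \subset J'_{\min}$. The paper's route is shorter once Theorem \ref{Tfinite}(1) is available; yours is more self-contained and gives an independent combinatorial reason why $J_{\min} \subset J'$ forces $J_{\min} \subset J'_{\min}$ under the hypothesis $J \setminus J(\vla) \subset J' \setminus J(\vla)$, which is of some interest in its own right.
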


In particular, equation \eqref{Evinberg} completely classifies the fibers
of the maps $\scrf_{\vla}$ and $\scrf^{(1)}_{\vla}$, defined in equations
\eqref{Efacemap}, \eqref{Efacemap1} respectively. Special cases of this
result were partially known for finite-dimensional modules $L(\lambda)$,
and specifically, the adjoint representation. See Theorems \ref{Tvin} and
\ref{Tcm1}.

\begin{proof}
The first equivalences in and following \eqref{Evinberg} both follow from
Theorem \ref{T5} and the fact that $\wt \vla \cap \conv_\R w(\wt_J \vla)
= w(\wt_J \vla)$ for all $w \in W_{J(\vla)}$ and $J \subset I$, since
$\wt \vla$ is $W_{J(\vla)}$-stable. Now $w(\wt_J \vla) = w'(\wt_{J'}
\vla)$ if and only if $\wt_J \vla = \wt_{J'} \vla$ and $w^{-1} w'$
stabilizes this set. Equation \eqref{Evinberg} now follows by Theorems
\ref{T5} and \ref{Tfinite}.
The last equivalence in the proposition now follows using Theorem
\ref{Tfinite} and by modifying the proof of $(2) \implies (4)$ in Theorem
\ref{T5}.
\end{proof}

\section{Inclusion relations among standard parabolic subsets}

In this section we prove Theorem \ref{T5}, which classifies when two
standard parabolic subsets of $\wt \vla$ are equal. (Note by Theorem
\ref{T23} that this is also equivalent to the problem of studying
inclusion relations among maximizer sets of weights, for a very large
family of highest weight modules.)
As the proof of Theorem \ref{T5} is quite involved, we begin by first
studying the case where the standard parabolic subsets in question are
finite. We then proceed to the general case.

\subsection{Inclusion relations among finite maximizer subsets}

Recall by Theorem \ref{T1} that the standard parabolic subsets of $\wt
\vla$ that are finite sets are all of the form $\wt_J \vla$ with $J
\subset J(\vla)$. We now characterize when two finite standard parabolic
subsets of $\wt \vla$ are equal.

\begin{prop}\label{P11}
Fix $\lambda \in \lie{h}^*$, $M(\lambda) \twoheadrightarrow \vla$, and
$J,J' \subset J(\vla)$. Then the vertices of $\conv_\R(\wt_J \vla)$ are
precisely $W_J(\lambda)$. Moreover, the following are equivalent
(notation in Definition \ref{Dsupp}):
\begin{enumerate}
\item $\wt_J \vla = \wt_{J'} \vla$.
\item $\rho_{\wt_J \vla} = \rho_{\wt_{J'} \vla}$.
\item $\rho_{\wt_J \vla} \in \Q_+ \rho_{\wt_{J'} \vla}$.
\item $\pi_{J(\vla)}(\rho_{\wt_J \vla}) = \pi_{J(\vla)}(\rho_{\wt_{J'}
\vla})$.
\item $\pi_{J(\vla)}(\rho_{\wt_J \vla}) \in \Q_+
\pi_{J(\vla)}(\rho_{\wt_{J'} \vla})$.
\item $W_J(\lambda) = W_{J'}(\lambda)$.
\item $\rho_{\wt_J \vla}, \rho_{\wt_{J'} \vla}$ are both fixed by $W_{J
\cup J'}$.
\end{enumerate}
\end{prop}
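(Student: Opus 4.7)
The plan is to first establish the vertex claim via Theorem~\ref{T1}, then prove the equivalences through the cycle $(1) \Rightarrow (2) \Rightarrow (3) \Rightarrow (5) \Rightarrow (7) \Rightarrow (1)$, together with the additional equivalence $(1) \Leftrightarrow (6)$ and the side implications $(2) \Rightarrow (4) \Rightarrow (5)$ and $(1) \Rightarrow (7)$. The main obstacle will be $(7) \Rightarrow (1)$.

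For the vertex claim: since $J \subset J(\vla)$, Theorem~\ref{T1} supplies the relations $(x^-_{\alpha_j})^{\lambda(h_j)+1} v_\lambda = 0$ for all $j \in J$, so the $\lie{g}_J$-submodule $U(\lie{g}_J) v_\lambda$ is a finite-dimensional simple $\lie{g}_J$-module with $\lie{h}$-highest weight $\lambda$; its weight set is $\wt_J \vla$, and the vertex set of the convex hull is $W_J(\lambda)$ by the classical Weyl polytope theorem applied to $\lie{g}_J$.

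The routine implications are $(1) \Rightarrow (2) \Rightarrow (3)$, $(2) \Rightarrow (4)$, $(3) \Rightarrow (5)$, and $(4) \Rightarrow (5)$ (the last three by applying $\pi_{J(\vla)}$), together with $(1) \Rightarrow (6)$ from the vertex claim and $(1) \Rightarrow (7)$ since if $\wt_J \vla = \wt_{J'} \vla$, this common set is simultaneously $W_J$- and $W_{J'}$-stable by Theorem~\ref{T1}, hence $W_{J \cup J'}$-stable, so $\rho_{\wt_J \vla}$ is $W_{J \cup J'}$-fixed. For $(6) \Rightarrow (1)$ I would first reduce to $J \subset J'$ by replacing $J'$ with $J \cup J'$: both $W_J$ and $W_{J'}$ preserve the common orbit $W_J(\lambda) = W_{J'}(\lambda)$, so $W_{J \cup J'}$ does too, forcing $W_{J \cup J'}(\lambda) = W_J(\lambda)$. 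Then $\wt_J \vla \subset \wt_{J'} \vla$ is automatic, while the classical saturation $\wt_{J'} \vla = (\lambda - Q^+_{J'}) \cap \conv_\R W_{J'}(\lambda)$ together with $\conv_\R W_{J'}(\lambda) = \conv_\R W_J(\lambda) \subset \lambda + \R \Delta_J$ forces $\lambda - \mu \in Q^+_{J'} \cap \R \Delta_J = Q^+_J$ by linear independence of $\Delta$. Finally, $(5) \Rightarrow (7)$ uses that $\ker \pi_{J(\vla)} = \C \Omega_{I \setminus J(\vla)}$ is pointwise $W_{J(\vla)}$-fixed, so for any $J'' \subset J(\vla)$ a vector in $\lie{h}^*$ is $W_{J''}$-fixed iff its $\pi_{J(\vla)}$-projection is; the $\Q_+$-proportionality in (5) then transfers the automatic $W_J$-fixity of $\pi(\rho_{\wt_J \vla})$ to $\pi(\rho_{\wt_{J'} \vla})$ and, symmetrically, the $W_{J'}$-fixity to $\pi(\rho_{\wt_J \vla})$, yielding $W_{J \cup J'}$-fixity of both $\rho$-sums.

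The main obstacle is $(7) \Rightarrow (1)$, which I would prove as follows. First reduce to $\wt_J \vla = \wt_{J_{\min}} \vla$: every connected component of $J$ disjoint from $\supp(\lambda)$ has $\lambda(h_j) = 0$ for each $j$ in it, so integrability forces $x^-_{\alpha_j} v_\lambda = 0$, and $\lie{g}_{J \setminus J_{\min}}$ commutes past $\lie{g}_{J_{\min}}$ on $v_\lambda$. Hence $\rho_{\wt_J \vla} = N\lambda - \sum_{j \in J_{\min}} a_j \alpha_j$ with $N = |\wt_J \vla|$ and every $a_j > 0$ (each connected component of $J_{\min}$ carries a nontrivial finite-dimensional simple $\lie{g}_C$-module whose lowest weight $w_\circ^C \lambda$ satisfies $\lambda - w_\circ^C \lambda \in Q^+_C$ with strictly positive $\alpha_j$-coefficient for every $j \in C$). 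The $\omega_i$-coefficient of $\rho_{\wt_J \vla}$ equals $N\lambda(h_i) - \sum_{j \in J_{\min}} a_j \alpha_j(h_i)$, and (7) forces it to vanish for every $i \in J \cup J'$. The new content at $i \in J' \setminus J$, combined with $\lambda(h_i) \in \Z_+$ (as $i \in J(\vla) \subset J_\lambda$) and $\alpha_j(h_i) \leq 0$ for $i \neq j$, yields $\lambda(h_i) = 0$ and $\alpha_j(h_i) = 0$ for every $j \in J_{\min}$: every $i \in J' \setminus J$ is Dynkin-disconnected from $J_{\min}$ and outside $\supp(\lambda)$. A short combinatorial check then gives $(J \cup J')_{\min} = J_{\min}$, so the preliminary reduction yields $\wt_{J \cup J'} \vla = \wt_{(J \cup J')_{\min}} \vla = \wt_J \vla$; by the symmetric argument applied to $\rho_{\wt_{J'} \vla}$ one obtains $\wt_{J \cup J'} \vla = \wt_{J'} \vla$, which gives (1).
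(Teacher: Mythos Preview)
Your overall strategy is sound and most of the argument is correct, but there is one genuine gap, and your route through $(7)\Rightarrow(1)$ differs substantively from the paper's.

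\textbf{The gap.} In your step $(5)\Rightarrow(7)$ the appeal to ``symmetry'' fails when the proportionality constant is~$0$. Condition~(5) is not symmetric in $J,J'$: it says $\pi_{J(\vla)}(\rho_{\wt_J\vla}) = c\,\pi_{J(\vla)}(\rho_{\wt_{J'}\vla})$ for some $c\in\Q_+$, and when $c=0$ you cannot invert to transfer $W_J$-fixity to $\pi_{J(\vla)}(\rho_{\wt_{J'}\vla})$. Concretely, take $\lie{g}=\lie{sl}_2$, $\lambda=\omega_1$, $\vla=L(\omega_1)$, $J=\{1\}$, $J'=\emptyset$: then $\rho_{\wt_J\vla}=0$ and $\rho_{\wt_{J'}\vla}=\omega_1$, so $0\in\Q_+\omega_1$ and (5) holds, yet $\omega_1$ is not $s_1$-fixed, so (7) fails (as does (1)). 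The paper's own argument for $(5)\Rightarrow(1)$ via the maximizer identity of Proposition~\ref{Pstable} has precisely the same defect at $c=0$ (the maximizer of $0$ is the whole set $\wt_{J(\vla)}\vla$, not necessarily $\wt_{J'}\vla$), so this appears to be a flaw in the statement itself: conditions (3) and (5) should use strictly positive rationals.

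\textbf{The different route for $(7)\Rightarrow(1)$.} The paper proves this via Lemma~\ref{Lweyl}, a monomial-word argument: assuming $\mu\in\wt_{J\cup J'}\vla\setminus\wt_{J'}\vla$, one locates the first letter $x^-_{\alpha_{i_k}}$ with $i_k\in J\setminus J'$ in a monomial reaching~$\mu$ and derives $(\rho_{\wt_{J'}\vla},\alpha_{i_k})>0$, contradicting $W_{J\cup J'}$-invariance. Your approach instead establishes $\wt_J\vla=\wt_{J_{\min}}\vla$ directly, writes $\rho_{\wt_J\vla}=N\lambda-\sum_{j\in J_{\min}}a_j\alpha_j$ with all $a_j>0$, and uses vanishing of the $h_i$-eigenvalue at each $i\in J'\setminus J$ to force $i\notin\supp(\lambda)$ and $i\perp J_{\min}$, whence $(J\cup J')_{\min}=J_{\min}$. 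This is correct and more self-contained; it effectively anticipates the $J_{\min}$-reduction of Theorem~\ref{T5}, whereas the paper defers that machinery and keeps Proposition~\ref{P11} lighter by invoking Lemma~\ref{Lweyl}. Your saturation argument for $(6)\Rightarrow(1)$ is likewise a valid and more explicit alternative to the paper's appeal to Theorem~\ref{Tkr3}.
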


This is an ``intermediate'' result since $J,J' \subset J(\vla)$. The case
of general $J,J' \subset I$ is Theorem \ref{T5}.

The proof of Proposition \ref{P11} requires the following notation and
results from \cite{KR,Khwf}.

\begin{defn}\label{Dsupp}\hfill
\begin{enumerate}
\item Given $J \subset I$, define $\varpi_J : \lambda + \C \Delta_J \to
\pi_J(\lambda) + \C \Delta_J$ (where the codomain comes from $\lie{g}_J$)
as follows: $\varpi_J(\lambda + \mu) := \pi_J(\lambda) + \mu$.
\item Given a nonempty finite subset $X \subset \lie{h}^*$, define its
average value, or barycenter, to be: $\avg(X) := \frac{1}{|X|} \sum_{x
\in X} x$. Also define $\rho_X := \sum_{x \in X} x$.

\item Given $X \subset \lie{h}^*$ and $\mu \in \lie{h}^*$, define the
corresponding \textit{maximizer subset} $X(\mu) := \{ x \in X : (\mu,
x-x') \in \R_+\ \forall x' \in X \}$.
\end{enumerate}
\end{defn}

We now state various results that are repeatedly used in the present and
subsequent sections, to prove the main theorems in this paper. First
recall that the following special case of Proposition \ref{P11} has been
shown in the literature, for finite-dimensional modules.

\begin{theorem}[Khare and Ridenour, {\cite[Theorem 4]{KR}}]\label{Tkr3}
Suppose $\lambda \in P^+$ and $J,J' \subset I = J(L(\lambda))$. The
vertices of $\conv_\R \wt_J L(\lambda)$ are precisely $W_J(\lambda)$.
Moreover, $\wt_J L(\lambda) = \wt_{J'} L(\lambda)$ if and only if
$\rho_{\wt_J L(\lambda)} = \rho_{\wt_{J'} L(\lambda)}$, if and only if
$W_J(\lambda) = W_{J'}(\lambda)$.
\end{theorem}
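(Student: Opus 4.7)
The plan is to reduce to the classical theory of Weyl polytopes by identifying $\wt_J L(\lambda)$ with the weight set of a finite-dimensional simple module for the semisimple Levi subalgebra $\lie{g}_J$. Equation \eqref{Eface} gives $\wt_J L(\lambda) = \wt\, U(\lie{g}_J) v_\lambda$. Since $J \subset I = J(L(\lambda))$ and $\lambda \in P^+$, the weight $\lambda$ is dominant integral for $\lie{g}_J$ and $v_\lambda$ is annihilated by $\lie{n}^+ \cap \lie{g}_J$. Hence $U(\lie{g}_J) v_\lambda$ is a finite-dimensional simple $\lie{g}_J$-module, and $\conv_\R \wt_J L(\lambda)$ is a translated Weyl polytope for $\lie{g}_J$ sitting inside $\lambda + \R\Delta_J$. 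The classical vertex theorem, applied factor by factor to the simple summands of $\lie{g}_J$, then identifies the vertex set as $W_J(\lambda)$.

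The implication $(1) \Rightarrow (3)$ is now immediate. For $(3) \Rightarrow (1)$: if $W_J(\lambda) = W_{J'}(\lambda)$, then $\conv_\R \wt_J L(\lambda) = \conv_\R W_J(\lambda) = \conv_\R W_{J'}(\lambda) = \conv_\R \wt_{J'} L(\lambda)$. Now \eqref{Eface} writes $\wt_J L(\lambda) = \arg\max_{\wt L(\lambda)} (\rho_{I \setminus J}, -)$, which is exactly $\wt L(\lambda) \cap \conv_\R \wt_J L(\lambda)$, since the weights attaining the max lie precisely on the supporting face. Intersecting both convex hulls with $\wt L(\lambda)$ then yields $\wt_J L(\lambda) = \wt_{J'} L(\lambda)$. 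The implication $(1) \Rightarrow (2)$ is trivial.

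For $(2) \Rightarrow (1)$: let $r := \rho_{\wt_J L(\lambda)} = \rho_{\wt_{J'} L(\lambda)}$. Being a sum over the $W_J$-stable set $\wt_J L(\lambda)$ (Theorem \ref{T1}(c)) and also over the $W_{J'}$-stable set, $r$ is fixed by both $W_J$ and $W_{J'}$, hence by the parabolic subgroup $W_{J \cup J'}$ they generate. The barycenter $c_J := r/|\wt_J L(\lambda)|$ is therefore the $W_J$-fixed point in $\lambda + \R\Delta_J$; by invertibility of the Cartan matrix of $\lie{g}_J$, this is the unique such point in the affine space. But $c_J$ is also $W_{J \cup J'}$-fixed and lies in $\lambda + \R\Delta_{J \cup J'}$, so by the analogous uniqueness for $W_{J \cup J'}$, $c_J = c_{J \cup J'}$, the barycenter of $\wt_{J \cup J'} L(\lambda)$. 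Symmetrically $c_{J'} = c_{J \cup J'}$. The remaining task is to upgrade these barycenter equalities to the Weyl-orbit equality $W_J(\lambda) = W_{J'}(\lambda)$, which together with the $(3) \Rightarrow (1)$ step completes the proof.

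The main obstacle is this upgrade. Since two distinct Weyl orbits can in principle share a common barycenter, one cannot conclude from $c_J = c_{J \cup J'}$ alone that $W_J(\lambda) = W_{J \cup J'}(\lambda)$. The extra leverage comes from the explicit form $c_J = \lambda - \sum_{j \in J} b_j \alpha_j$ together with the $W_{J \cup J'}$-fixedness condition $c_J(h_{j'}) = 0$ for $j' \in J' \setminus J$. These force the linear relations $\sum_{j \in J} b_j \alpha_j(h_{j'}) = \lambda(h_{j'})$, which translate (via a direct reflection-by-$s_{j'}$ computation on a generic $w(\lambda) \in W_J(\lambda)$) into the statement that $s_{j'}$ preserves $W_J(\lambda)$ for every $j' \in J' \setminus J$. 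Hence $W_{J \cup J'}(\lambda) \subset W_J(\lambda)$, and by the reverse inclusion $W_J(\lambda) = W_{J \cup J'}(\lambda)$, and symmetrically $W_{J'}(\lambda) = W_{J \cup J'}(\lambda)$.
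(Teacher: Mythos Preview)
The paper does not prove Theorem~\ref{Tkr3}; it is quoted from \cite[Theorem~4]{KR} and used as a black box in the proof of Proposition~\ref{P11}. So there is no ``paper's own proof'' to compare against, and your attempt must stand on its own.

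Your vertex identification and the equivalences $(1)\Leftrightarrow(3)$ and $(1)\Rightarrow(2)$ are correct. The genuine gap is in the final paragraph, where you pass from the barycenter relation $c_J(h_{j'})=0$ to ``$s_{j'}$ preserves $W_J(\lambda)$''. The phrase ``via a direct reflection-by-$s_{j'}$ computation on a generic $w(\lambda)$'' is not an argument: for $s_{j'}(w(\lambda))=w(\lambda)-w(\lambda)(h_{j'})\alpha_{j'}$ to lie in $W_J(\lambda)\subset\lambda+\R\Delta_J$ (with $\alpha_{j'}\notin\R\Delta_J$) one needs $w(\lambda)(h_{j'})=0$ \emph{individually} for each $w\in W_J$, and your single linear relation on the averaged coefficients $b_j$ does not deliver that. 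You yourself flag that two orbits can share a barycenter, and then do not actually supply the extra leverage.

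The missing ingredient is a sign argument. Write $w(\lambda)=\lambda-\sum_{j\in J}a_j^{(w)}\alpha_j$ with $a_j^{(w)}\ge 0$. For $j'\in J'\setminus J$ the off-diagonal Cartan entries give $\alpha_j(h_{j'})\le 0$, and $\lambda(h_{j'})\ge 0$ since $\lambda\in P^+$; hence $w(\lambda)(h_{j'})\ge 0$ for every $w\in W_J$. Now $\avg(W_J(\lambda))$ is $W_J$-fixed and lies in $\lambda+\R\Delta_J$, so by your own uniqueness step it equals $c_J$; thus the average of the nonnegative quantities $w(\lambda)(h_{j'})$ over the orbit is $c_J(h_{j'})=0$, forcing each one to vanish. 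Hence $s_{j'}$ fixes $W_J(\lambda)$ pointwise for every $j'\in J'\setminus J$, and combined with the $s_j$ for $j\in J$ this gives $W_{J\cup J'}(\lambda)=W_J(\lambda)$. With this addition your proof is complete.
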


The following result discusses how to go from the highest weight down to
any other weight in $\wt \vla$.

\begin{lemma}[{\cite[Lemma 3.12]{Khwf}}]\label{Lweights}
Suppose $M(\lambda) \twoheadrightarrow \vla$ (with highest weight space
$\C v_\lambda$) and $\mu \in \wt_J \vla $, for some $\lambda \in
\lie{h}^*$ and $J \subset I$. Then there exist $\mu_j \in \wt_J \vla$
such that
\[ \lambda = \mu_0 > \mu_1 > \dots > \mu_N = \mu, \qquad \mu_j -
\mu_{j+1} \in \Delta_J\ \forall j, \qquad N \geq 0. \]

\noindent Moreover, if $\vla = L(\lambda)$ is simple, then so is the
$\lie{g}_J$-submodule $\vla_J := U(\lie{g}_J) v_\lambda$.
\end{lemma}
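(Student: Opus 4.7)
The plan is to handle the two assertions separately. For the existence of a chain from $\lambda$ down to $\mu$ inside $\wt_J \vla$, the key observation is that the weight space $\vla_\mu$ is spanned by images of monomials $F v_\lambda$ where $F$ is a product involving only the Chevalley generators $\{x^-_{\alpha_j} : j \in J\}$. Indeed, $\vla$ is a quotient of $M(\lambda) = U(\lie{n}^-) v_\lambda$, and $U(\lie{n}^-)$ is generated as an algebra by $\{x^-_{\alpha_i} : i \in I\}$, so $\vla_\mu$ is spanned by monomials in all the $x^-_{\alpha_i}$ applied to $v_\lambda$; but a monomial $x^-_{\alpha_{i_1}} \cdots x^-_{\alpha_{i_N}}$ has weight $-\sum_\ell \alpha_{i_\ell}$, and since $\lambda - \mu = \sum_{j \in J} n_j \alpha_j$ and the simple roots are linearly independent, every index $i_\ell$ is forced to lie in $J$. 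I would pick such a monomial $F = x^-_{\alpha_{j_N}} \cdots x^-_{\alpha_{j_1}}$ with $F v_\lambda \neq 0$ in $\vla$, and set $\mu_k := \lambda - \sum_{\ell=1}^k \alpha_{j_\ell}$. Each successive difference $\mu_k - \mu_{k+1} = \alpha_{j_{k+1}}$ lies in $\Delta_J$, and the partial product $x^-_{\alpha_{j_k}} \cdots x^-_{\alpha_{j_1}} v_\lambda$ is a vector in $\vla_{\mu_k}$; it must be nonzero, for otherwise applying further lowering operators would annihilate it and $F v_\lambda$ would vanish. Hence each $\mu_k \in \wt_J \vla$, yielding the chain (with $N = 0$ handled trivially if $\mu = \lambda$).

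For the simplicity assertion, assume $\vla = L(\lambda)$ and let $N$ be a nonzero $\lie{g}_J$-submodule of $\vla_J$. Using the $\lie{h}$-weight decomposition inherited from $\vla$, I would pick a nonzero $w \in N$ whose weight $\lambda - \nu$ (with $\nu \in Q^+_J$) is minimal in the $Q^+_J$-order, which forces $w$ to be $\lie{n}^+_J$-singular. The key step is to upgrade $w$ to a $\lie{g}$-singular vector: for $i \notin J$, the vector $x^+_{\alpha_i} w$ would have $\lie{h}$-weight $\lambda - \nu + \alpha_i$, but since $\nu \in Q^+_J$ has zero coefficient on $\alpha_i$, this weight lies outside $\lambda - Q^+$ and a fortiori outside $\wt \vla$, forcing $x^+_{\alpha_i} w = 0$. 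Thus $w$ generates a highest weight $\lie{g}$-submodule of $L(\lambda)$ of highest weight $\lambda - \nu$; simplicity of $L(\lambda)$ then forces $\nu = 0$, so $w \in \C v_\lambda$, giving $v_\lambda \in N$ and $N = \vla_J$.

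The main obstacle lies in the first part: the content is not that one can connect $\lambda$ to $\mu$ inside the lattice $\lambda - \Z_+ \Delta_J$ (which is purely combinatorial and trivial), but that one can do so while remaining inside $\wt_J \vla$ at every intermediate step. The resolution combines linear independence of the simple roots (which restricts the monomials contributing to $\vla_\mu$ to those involving only $\alpha_j$ with $j \in J$) with the downward propagation of nonvanishing in a quotient of a Verma module (once a vector is zero in $\vla$, further applications of $\lie{n}^-$ keep it zero). I would make both ingredients explicit, since they are exactly what convert a trivial statement about the weight lattice into a representation-theoretic chain living in $\wt \vla$.
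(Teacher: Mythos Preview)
Your argument is correct, and since the paper does not prove this lemma itself (it is quoted from \cite[Lemma 3.12]{Khwf}), there is nothing to compare against; your proof is the standard one.

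One wording slip in the second part: you write ``pick a nonzero $w \in N$ whose weight $\lambda - \nu$ \dots\ is minimal in the $Q^+_J$-order, which forces $w$ to be $\lie{n}^+_J$-singular.'' You want $\lambda - \nu$ \emph{maximal} among weights of $N$ (equivalently $\nu$ of minimal height), so that raising operators $x^+_{\alpha_j}$ for $j \in J$ kill $w$; a minimal weight would instead be $\lie{n}^-_J$-singular. The remainder of the argument---upgrading to full $\lie{n}^+$-singularity via the support constraint on $\nu$, then invoking simplicity of $L(\lambda)$---is correct as written.
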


The next result is a ``transfer principle'', sending an arbitrary highest
weight module $\vla$ to its ``integrable top'' $\vla_{J(\vla)}$. In other
words, $\varpi_J : \wt_J \vla \to$\break
$\wt L_J(\pi_J(\lambda))$ is a bijection if $J \subset J(\vla)$.

\begin{lemma}[{\cite[Lemma 4.3]{Khwf}}]\label{Lfacts}
Fix $\lambda \in \lie{h}^*, M(\lambda) \twoheadrightarrow \vla$ generated
by $0 \neq v_\lambda \in \vla_\lambda$, and $J \subset I$.
\begin{enumerate}
\item $J \subset J_\lambda$ if and only if $\pi_J(\lambda) \in P^+$ (in
fact, in $P_J^+$).

\item Let $\vla_J := U(\lie{g}_J) v_\lambda$. Then for all $J,J' \subset
I$, $\wt_{J'} \vla_J = \wt_{J \cap J'} \vla$.

\item $\vla_J$ is a highest weight $\lie{g}_J$-module with highest weight
$\pi_J(\lambda)$. In other words, $M_J(\pi_J(\lambda)) \twoheadrightarrow
U(\lie{g}_J) v_\lambda$, where $M_J$ denotes the corresponding Verma
$\lie{g}_J$-module.

\item For all $w \in W_J$ and $\mu \in \C \Delta_J$,
$w(\varpi_J(\lambda + \mu)) = \varpi_J(w(\lambda + \mu))$.
\end{enumerate}
\end{lemma}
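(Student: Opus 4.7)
The plan is to prove the four parts in order. Each follows from the dual pairing $\omega_i(h_j) = \delta_{ij}$, the PBW theorem for $U(\lie{g}_J)$, and the linear independence of $\Delta$. For part (1), expanding $\lambda = \sum_{i \in I} \lambda(h_i) \omega_i$ immediately yields $\pi_J(\lambda) = \sum_{j \in J} \lambda(h_j) \omega_j$, which lies in $P^+_J = \Z_+ \Omega_J$ (and \emph{a fortiori} in $P^+$) precisely when $\lambda(h_j) \in \Z_+$ for every $j \in J$, i.e., when $J \subset J_\lambda$.

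For part (2), I first observe that since $v_\lambda$ is annihilated by $\lie{n}^+_J \subset \lie{n}^+$ and is a weight vector for $\lie{h}$, the PBW factorization $U(\lie{g}_J) = U(\lie{n}^-_J) U(\lie{h}_J) U(\lie{n}^+_J)$ gives $\vla_J = U(\lie{n}^-_J) v_\lambda$. Consequently $\wt \vla_J \subset \lambda - \Z_+ \Delta_J$, and combined with $\wt \vla_J \subset \wt \vla$ this yields $\wt_{J'} \vla_J \subset \wt_{J \cap J'} \vla$ via linear independence of $\Delta$. The reverse containment is the main obstacle. Given $\mu \in \wt_{J \cap J'} \vla$, every vector in the weight space $\vla_\mu$ can be written as a sum of monomials $x^-_{\alpha_{i_1}} \cdots x^-_{\alpha_{i_k}} v_\lambda$ with $\sum_l \alpha_{i_l} = \lambda - \mu \in \Z_+ \Delta_{J \cap J'}$; linear independence of $\Delta$ then forces every $i_l \in J \cap J' \subset J$. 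Hence $\vla_\mu \subset U(\lie{n}^-_J) v_\lambda = \vla_J$, and so $\mu \in \wt_{J'} \vla_J$. This monomial-decomposition argument is the one nontrivial step in the whole lemma.

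Part (3) is then essentially free: $v_\lambda$ is killed by $\lie{n}^+_J$ and is an eigenvector for $\lie{h}_J$ with eigenvalues $\lambda(h_j) = \pi_J(\lambda)(h_j)$ for $j \in J$ (using part (1)), so the universal property of the Verma module over $\lie{g}_J$ produces a surjection $M_J(\pi_J(\lambda)) \twoheadrightarrow U(\lie{g}_J) v_\lambda = \vla_J$. For part (4), I compute both sides directly. The left-hand side equals $w(\pi_J(\lambda) + \mu) = w(\pi_J(\lambda)) + w(\mu)$ by the definition of $\varpi_J$ together with the linearity of $w$. For the right-hand side, $\lambda - \pi_J(\lambda) \in \C \Omega_{I \setminus J}$ is annihilated by every $h_j$ with $j \in J$, hence is fixed by every simple reflection $s_j$ ($j \in J$) and therefore by all of $W_J$. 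Thus $w(\lambda + \mu) - \lambda = w(\pi_J(\lambda)) - \pi_J(\lambda) + w(\mu) \in \C \Delta_J$, so applying $\varpi_J$ replaces the $\lambda$-summand by $\pi_J(\lambda)$ and produces $w(\pi_J(\lambda)) + w(\mu)$, matching the left-hand side.
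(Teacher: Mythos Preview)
Your proof is correct. Note, however, that the paper does not contain its own proof of this lemma: it is quoted verbatim from \cite[Lemma 4.3]{Khwf} and used as a tool, so there is no argument in the present paper to compare against. Your approach is the standard one and is essentially what one would expect to find in the cited source: part (1) is immediate from the dual pairing, part (2) is the only step with content and follows exactly as you say from PBW and linear independence of $\Delta$, part (3) is the universal property of Verma modules, and part (4) is the observation that $W_J$ fixes $\C\Omega_{I\setminus J}$ pointwise.
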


Also recall previous results on the supports of barycenters of finite
standard parabolic faces, as well as on maximizing linear functionals
corresponding to standard parabolic faces.

\begin{prop}[{\cite[Proposition 4.10]{Khwf}}]\label{Pstable}
Fix $\lambda \in \lie{h}^*,\ M(\lambda) \twoheadrightarrow \vla$, and $J
\subset J(\vla)$.
\begin{enumerate}
\item Then $\rho_{\wt_J \vla}$ is $W_J$-invariant, and in $P^+_{J_\lambda
\setminus J} \times \C \Omega_{I \setminus J_\lambda}$.

\item Define $\rho_{I \setminus J} := \sum_{i \notin J} \omega_i$. Then
for all $J' \subset J_\lambda$, one has an inclusion of maximizer
subsets:
\begin{equation}\label{Efinsub}
\wt_J \vla = (\wt \vla)(\rho_{I \setminus J}) = (\wt_{J(\vla)}
\vla)(\pi_{J(\vla)} \rho_{\wt_J \vla}) \subset (\wt \vla)(\pi_{J'}
\rho_{\wt_J \vla})
\end{equation}

\noindent and $0 \leq (\pi_{J'} \rho_{\wt_J \vla})(\wt_J \vla) \in \Z_+$.
\end{enumerate}
\end{prop}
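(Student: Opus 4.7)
My plan is to prove the two parts in sequence, starting from the $W_J$-stability of $\wt_J \vla$ (Theorem \ref{T1}) and using direct pairing computations against Cartan generators.

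For part~(1), $W_J$-invariance of $\rho_{\wt_J \vla} = \sum_{\mu \in \wt_J \vla} \mu$ is immediate because $W_J$ permutes the finite set $\wt_J \vla$. Applying $s_j$ to this invariance, for each $j \in J$, forces $\rho_{\wt_J \vla}(h_j) = 0$, which places $\rho_{\wt_J \vla}$ in $\C \Omega_{I \setminus J}$. To refine this to $P^+_{J_\lambda \setminus J} \times \C \Omega_{I \setminus J_\lambda}$, I would fix $i \in J_\lambda \setminus J$ and write any $\mu = \lambda - \sum_{j \in J} m_j \alpha_j \in \wt_J \vla$, so that $\mu(h_i) = \lambda(h_i) - \sum_{j \in J} m_j a_{ji}$. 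Here $\lambda(h_i) \in \Z_+$ (since $i \in J_\lambda$) and $-a_{ji} \in \Z_+$ (off-diagonal Cartan entries with $j \in J$, $i \notin J$), so each $\mu(h_i) \in \Z_+$; summing over $\mu$ yields $\rho_{\wt_J \vla}(h_i) \in \Z_+$.

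For part~(2), the equality $\wt_J \vla = (\wt \vla)(\rho_{I \setminus J})$ is direct: for $\mu = \lambda - \sum_i n_i \alpha_i \in \wt \vla$, the pairing $(\rho_{I \setminus J}, \lambda - \mu) = \sum_{i \notin J} n_i (\alpha_i, \alpha_i)/2$ attains its maximum $0$ precisely when $n_i = 0$ for every $i \notin J$, i.e., when $\mu \in \wt_J \vla$. For the middle equality, I use part~(1) to write $\pi_{J(\vla)} \rho_{\wt_J \vla} = \sum_{k \in J(\vla) \setminus J} c_k \omega_k$ with $c_k \in \Z_+$ (using $J(\vla) \subset J_\lambda$). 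Pairing against $\mu = \lambda - \sum_{k \in J(\vla)} m_k \alpha_k \in \wt_{J(\vla)} \vla$ gives $(\pi_{J(\vla)} \rho_{\wt_J \vla}, \lambda - \mu) = \sum_{k \in J(\vla) \setminus J} c_k m_k (\alpha_k, \alpha_k)/2$, again maximized at $0$ on $\wt_J \vla$; the forward inclusion is then clear, while the reverse requires that any $\mu$ attaining the maximum lies in $\wt_J \vla$.

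The main obstacle is this reverse inclusion, because some $c_k$ may vanish, a priori allowing $m_k \neq 0$ in the pairing identity. I would handle this by analyzing when $c_k = 0$: since each summand $\mu(h_k) \geq 0$ for $\mu \in \wt_J \vla$ (by the calculation in part~(1)), vanishing of $c_k$ forces both $\lambda(h_k) = 0$ and $m_j = 0$ for every $j \in J$ adjacent to $k$ in the Dynkin diagram, across all $\mu \in \wt_J \vla$. Combined with Lemma \ref{Lfacts}, which identifies $U(\lie{g}_{J(\vla)}) v_\lambda$ as an integrable highest weight $\lie{g}_{J(\vla)}$-module with highest weight $\pi_{J(\vla)}(\lambda)$, and the decomposition of $\lie{g}_{J(\vla)}$ into Dynkin-connected components, this degeneracy propagates: the connected component $C$ of $k$ in $J(\vla)$ must satisfy $\pi_C(\lambda) = 0$, whence $\alpha_k$ cannot appear in the support of any weight of $\wt_{J(\vla)} \vla$, forcing $m_k = 0$. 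Finally, the inclusion $\wt_J \vla \subset (\wt \vla)(\pi_{J'} \rho_{\wt_J \vla})$ for $J' \subset J_\lambda$ and the integrality of $(\pi_{J'} \rho_{\wt_J \vla})(\wt_J \vla)$ follow from part~(1): the functional $\pi_{J'} \rho_{\wt_J \vla}$ has nonneg integer coefficients supported in $I \setminus J$, so it is constant (and integer-valued) on $\wt_J \vla$, and an analogue of the first pairing computation shows this value is the maximum on all of $\wt \vla$.
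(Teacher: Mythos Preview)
The paper does not prove Proposition~\ref{Pstable}; it is recalled verbatim from \cite[Proposition~4.10]{Khwf} as a tool, so there is no in-paper argument to compare against. I can therefore only assess your proof on its own terms.

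Your arguments for part~(1) and for the first equality $\wt_J \vla = (\wt \vla)(\rho_{I\setminus J})$ in part~(2) are clean and correct, as is the observation that $\pi_{J'}\rho_{\wt_J\vla}$, being supported on $\Omega_{J'\setminus J}$, is constant on $\wt_J\vla$ and hence $\wt_J\vla$ lies in the maximizer set for this functional. The one place where your sketch is not yet a proof is the reverse inclusion in the middle equality $(\wt_{J(\vla)}\vla)(\pi_{J(\vla)}\rho_{\wt_J\vla}) \subset \wt_J \vla$. From $c_k = 0$ for some $k \in J(\vla)\setminus J$ you correctly deduce $\lambda(h_k)=0$ and that no $\mu \in \wt_J\vla$ involves any $\alpha_j$ with $j\in J$ adjacent to $k$. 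But your next claim, that ``the connected component $C$ of $k$ in $J(\vla)$ must satisfy $\pi_C(\lambda)=0$'', does not follow from what you have: you have only controlled $\lambda(h_k)$ at the single node $k$, not along the whole component $C$ of $k$ in $J(\vla)$. What your information actually gives is that the nodes $j\in J$ adjacent to $k$ are ``dead'' in $\wt_J\vla$; to propagate and conclude $m_k=0$ for every maximizer you need a more careful induction (for instance, via Lemma~\ref{Lweights} applied inside $\wt_{J(\vla)}\vla$, showing that any weight with $m_k>0$ must be reached through a chain that passes through a node $k'$ with $c_{k'}>0$), or alternatively invoke the transfer to $L_{J(\vla)}(\pi_{J(\vla)}(\lambda))$ via $\varpi_{J(\vla)}$ and use known facts about maximizers in finite-dimensional modules. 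As written, the propagation step is asserted rather than argued.
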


Equipped with the above results, it is now possible to prove Proposition
\ref{P11}.

\begin{proof}[Proof of Proposition \ref{P11}]
The assertion about the vertices follows from Theorem \ref{Tkr3} (for
$\lie{g}_{J(\vla)}$) and Lemma \ref{Lfacts}, via the bijection
$\varpi_{J(\vla)}$. In the course of this reasoning, we use that
$W_{J(\vla)}(\lambda) \subset \lambda - \Z_+ \Delta$, and similarly for
$W_{J(\vla)}(\pi_{J(\vla)}(\lambda))$. Next, $\wt_J \vla$ and $\wt_{J'}
\vla$ are both finite sets by Theorem \ref{T1}. The following
implications are now obvious:
\[ (1) \implies (2) \implies (3) \implies (5); \qquad (2) \implies (4)
\implies (5). \]

\noindent Now if (5) holds, then the two (equal) weights have the same
maximizer by Proposition \ref{Pstable}:
\[ \wt_J \vla = (\wt_{J(\vla)} \vla)(\pi_{J(\vla)} \rho_{\wt_J \vla}) =
(\wt_{J(\vla)} \vla)(\pi_{J(\vla)} \rho_{\wt_{J'} \vla}) = \wt_{J'} \vla.
\]

\noindent This proves (1) again. Now if $\wt_J \vla = \wt_{J'} \vla$,
then their convex hulls (which are polytopes) are equal. Via
$\varpi_{J(\vla)}$,  this also means that the convex hulls of certain
subsets of weights of $M := L_{J(\vla)}(\pi_{J(\vla)}(\lambda))$, a
finite-dimensional $\lie{g}_{J(\vla)}$-module, are equal. Hence the sets
of vertices are the same, so by Theorem \ref{Tkr3},
$W_J(\pi_{J(\vla)}(\lambda)) = W_{J'}(\pi_{J(\vla)}(\lambda))$ in $\wt
M$. But then the same holds in $\wt \vla$ via $\varpi_{J(\vla)}$ (using
Lemma \ref{Lfacts}).

Conversely, assume (6); again use Lemma \ref{Lfacts} and work inside $M
=$\break
$L_{J(\vla)}(\pi_{J(\vla)}(\lambda))$ (via $\varpi_{J(\vla)}$). Theorem
\ref{Tkr3} for $\lie{g}_{J(\vla)}$ shows that $\wt_J M = \wt_{J'} M$, so
$\wt_J \vla = \wt_{J'} \vla$ via $\varpi_{J(\vla)}$. Finally, $(7)
\implies (1)$ using Lemma \ref{Lweyl} (below), and conversely, $X :=
\wt_J \vla = \wt_{J'} \vla$ is stable under both $W_J$ and $W_{J'}$ by
Theorem \ref{T1}. Hence so is $\rho_X$, which shows (7).
\end{proof}

The previous proof and the proof of Theorem \ref{T5} use the following
two preliminary results.

\begin{lemma}\label{Lsimple}
Fix $\lambda \in \lie{h}^*$, $M(\lambda) \twoheadrightarrow \vla$, and
$I_0 \subset I$ such that $\vla_{I_0} := U(\lie{g}_{I_0}) v_\lambda$ is a
simple $\lie{g}_{I_0}$-module.
Then the following are equivalent for $J \subset I_0$:
\begin{enumerate}
\item $\wt_J \vla_{I_0} = \wt_{\emptyset} \vla_{I_0} = \{ \lambda \}$.

\item $\lambda - \alpha_j \notin \wt \vla_{I_0}\ \forall j \in J$.

\item $x_j^- v_\lambda = 0\ \forall j \in J$.

\item $x_j^- v_\lambda \in \ker \lie{n}^+\ \forall j \in J$.

\item $J \subset I \setminus \supp(\lambda)$, i.e., $(\lambda,\alpha_j) =
0\ \forall j \in J$.
\end{enumerate}

\noindent Moreover, if $J \cap \supp(\lambda) \neq J' \cap
\supp(\lambda)$ (for $J,J' \subset I_0$), then $\wt_J \vla \neq \wt_{J'}
\vla$. In particular, the assignment $: J \mapsto \wt_J \vla$ is
one-to-one on the power set of $I_0 \cap \supp(\lambda)$.
\end{lemma}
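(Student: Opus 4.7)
The plan is to prove the equivalences $(1) \Leftrightarrow (2) \Leftrightarrow (3) \Leftrightarrow (4) \Leftrightarrow (5)$ in pairs, and then deduce the ``moreover'' clause directly by applying $(5) \Leftrightarrow (2)$ to singletons. The roles played by each hypothesis will be: Lemma \ref{Lweights} bridges (1) and (2); Lemma \ref{Lfacts}(3) together with a Verma module weight-space count bridges (2) and (3); the simplicity of $\vla_{I_0}$ bridges (3) and (4), and is invoked a second time for the nontrivial half of $(3) \Leftrightarrow (5)$.

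For $(1) \Leftrightarrow (2)$: the forward direction is immediate since $\lambda - \alpha_j \in \lambda - \Z_+ \Delta_J$ for each $j \in J$, so any such weight lying in $\wt \vla_{I_0}$ would also lie in $\wt_J \vla_{I_0}$. Conversely, I would apply Lemma \ref{Lweights} inside $\vla_{I_0}$ (noting $\wt_J \vla_{I_0} = \wt_J \vla$ by Lemma \ref{Lfacts}(2)): any $\mu \in \wt_J \vla_{I_0}$ with $\mu < \lambda$ yields a descending chain whose first step has the form $\lambda - \alpha_j \in \wt \vla_{I_0}$ with $j \in J$, contradicting (2). For $(2) \Leftrightarrow (3)$, Lemma \ref{Lfacts}(3) realizes $\vla_{I_0}$ as a quotient of the Verma module $M_{I_0}(\pi_{I_0}(\lambda))$ for $\lie{g}_{I_0}$, whose weight space at $\lambda - \alpha_j$ is spanned by $x_j^- v_\lambda$; hence $(\vla_{I_0})_{\lambda-\alpha_j} = \C x_j^- v_\lambda$, so $\lambda - \alpha_j \in \wt \vla_{I_0}$ iff $x_j^- v_\lambda \neq 0$.

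The implication $(3) \Rightarrow (4)$ is trivial. For $(4) \Rightarrow (3)$, since $\lie{n}^+_{I_0} \subset \lie{n}^+$, the hypothesis gives $x_j^- v_\lambda \in \ker \lie{n}^+_{I_0}$; then $U(\lie{g}_{I_0}) x_j^- v_\lambda$ is a highest weight $\lie{g}_{I_0}$-submodule of $\vla_{I_0}$ of highest weight $\lambda - \alpha_j < \lambda$, and by simplicity of $\vla_{I_0}$ it must vanish. For $(3) \Leftrightarrow (5)$, the identity $x_j^+ x_j^- v_\lambda = h_j v_\lambda = \lambda(h_j) v_\lambda$ gives $(3) \Rightarrow (5)$ immediately. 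Conversely, if $(\lambda,\alpha_j) = 0$, then for every $k \in I_0 \setminus \{j\}$ one has $[x_k^+, x_j^-] = \delta_{kj} h_j = 0$, so $x_k^+ x_j^- v_\lambda = x_j^- x_k^+ v_\lambda = 0$; combined with $x_j^+ x_j^- v_\lambda = 0$, this shows $x_j^- v_\lambda \in \ker \lie{n}^+_{I_0}$, and the argument from $(4) \Rightarrow (3)$ again forces $x_j^- v_\lambda = 0$.

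For the ``moreover'' clause, by symmetry pick $j \in (J \cap \supp(\lambda)) \setminus J'$. Since $j \in \supp(\lambda)$, condition (5) fails for the singleton $\{j\}$, so $(5) \Leftrightarrow (2)$ gives $\lambda - \alpha_j \in \wt \vla_{I_0} \subset \wt \vla$, whence $\lambda - \alpha_j \in \wt_J \vla$. However $\lambda - \alpha_j \notin \wt_{J'} \vla$ because $\alpha_j \notin \Z_+ \Delta_{J'}$ by linear independence of the simple roots (using $j \notin J'$); the ``in particular'' assertion follows by restricting to $J, J' \subset I_0 \cap \supp(\lambda)$. The main obstacle throughout is the implication $(4) \Rightarrow (3)$, which is the sole place where simplicity of $\vla_{I_0}$ is essential; the subtle point is that (4) is framed with respect to the full nilradical $\lie{n}^+$ of $\lie{g}$ while simplicity of $\vla_{I_0}$ is an intrinsic statement about $\lie{g}_{I_0}$, and the containment $\lie{n}^+_{I_0} \subset \lie{n}^+$ is exactly what allows the two viewpoints to be linked.
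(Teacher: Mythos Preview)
Your proof is correct and follows essentially the same approach as the paper. The paper organizes the equivalences as a cycle $(1)\Rightarrow(2)\Rightarrow(3)\Rightarrow(4)\Rightarrow(5)$ followed by all contrapositives, while you argue pairwise, but the substantive ingredients are identical: Lemma~\ref{Lweights} for $(1)\Leftrightarrow(2)$, the one-dimensionality of the $(\lambda-\alpha_j)$-weight space for $(2)\Leftrightarrow(3)$, simplicity of $\vla_{I_0}$ for $(3)\Leftrightarrow(4)$, and the commutator identity $x_j^+ x_j^- v_\lambda=\lambda(h_j)v_\lambda$ together with the Serre relations for the link to $(5)$; the ``moreover'' clause is handled the same way in both.
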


A special case is $\vla_{I_0} = L(\lambda)$ (for any $\lambda \in
\lie{h}^*$), when $\vla = L(\lambda)$ and $I_0 = I$.

\begin{proof}
That $(1) \implies (2) \implies (3) \implies (4)$ is clear.
Next, given (4), $0 = x^+_{\alpha_j} x^-_{\alpha_j} v_\lambda =
\lambda(h_j) v_\lambda$, whence $\lambda(h_j) = 0$. Thus
$(\lambda,\alpha_j) = 0\ \forall j \in J$, whence $J \subset I \setminus
\supp(\lambda)$.

We now show all the contrapositives.
Suppose $\lambda > \mu = \lambda - \sum_{j \in J} a_j \alpha_j \in \wt_J
\vla_{I_0} = \wt_J \vla$ (by Lemma \ref{Lfacts}). By Lemma
\ref{Lweights}, there exists a sequence $\lambda = \mu_0 > \mu_1 > \dots
> \mu_N = \mu$ in $\wt_J \vla$, such that $\mu_j - \mu_{j+1} \in
\Delta_J\ \forall j$. Thus, $\mu_1 = \lambda - \alpha_j \in \wt \vla$ for
some $j \in J$, which contradicts (2). In turn, this implies:
$x^-_{\alpha_j} v_\lambda \neq 0$ (notation as in Lemma \ref{Lweights}),
which contradicts (3).
If (3) fails, then $x^-_{\alpha_j} v_\lambda$ is not a maximal vector
(i.e., not in $\ker \lie{n}^+$), since $\vla_{I_0}$ is simple.
If (4) is false, then by the Serre relations, $0 \neq x^+_{\alpha_j}
x^-_{\alpha_j} v_\lambda = \lambda(h_j) v_\lambda$. Hence $(\lambda,
\alpha_j) \neq 0$, i.e., $j \in \supp(\lambda)$. This contradicts (5).

Finally, given $J,J' \subset I_0$ as above, choose $j \in J \cap
\supp(\lambda) \setminus J'$. By the above equivalences (in which $J = \{
j \}$), $\lambda - \alpha_j \in \wt \vla_{I_0}$. Hence $\lambda -
\alpha_j \in \wt_J \vla_{I_0} \setminus \wt_{J'} \vla_{I_0}$, whence
$\wt_J \vla_{I_0} \neq \wt_{J'} \vla_{I_0}$. By Lemma \ref{Lfacts},
$\wt_J \vla \neq \wt_{J'} \vla$ (since $J,J' \subset I_0$).
\end{proof}

\begin{lemma}\label{Lweyl}
Suppose either that the setup of Proposition \ref{P11} holds and $W_{J
\cup J'}$ fixes $\rho_{\wt_{J'} \vla}$; or suppose $J' \subset I$, $J
\subset J(\vla)$, and $\Delta_J$ is orthogonal to $\lambda$ and to
$\Delta_{J'}$. Then $\wt_{J'} \vla = \wt_{J \cup J'} \vla$.
\end{lemma}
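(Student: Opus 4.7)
The plan is to prove the nontrivial inclusion $\wt_{J \cup J'} \vla \subseteq \wt_{J'} \vla$ uniformly for both hypotheses. Given $\mu \in \wt_{J \cup J'} \vla$, Lemma \ref{Lweights} supplies a descending chain $\lambda = \mu_0 > \mu_1 > \cdots > \mu_N = \mu$ in $\wt_{J \cup J'} \vla$ with consecutive differences in $\Delta_{J \cup J'}$. I argue by contradiction that no step subtracts an $\alpha_i$ with $i \in J \setminus J'$; this forces $\mu \in \lambda - \Z_+ \Delta_{J'}$ and completes the inclusion. So suppose such a step exists, and let $k$ be minimal with $\mu_{k-1} - \mu_k = \alpha_i$, $i \in J \setminus J'$. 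By minimality every earlier step subtracts a root in $\Delta_{J'}$, so $\mu_{k-1} \in \wt_{J'} \vla$ and $\lambda - \mu_{k-1} = \sum_{j \in J'} m_j \alpha_j$ with $m_j \in \Z_+$.

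The crux is the identity $\mu_{k-1}(h_i) = 0$. Under the second hypothesis, $\Delta_J \perp \Delta_{J'}$ first forces $J \cap J' = \emptyset$ (since each $(\alpha_l, \alpha_l) > 0$); then $(\alpha_j, \alpha_i) = 0$ for every $j \in J'$ and $(\lambda, \alpha_i) = 0$, giving $(\mu_{k-1}, \alpha_i) = 0$ term-by-term. Under the first hypothesis, $W_{J \cup J'}$-invariance of $\rho_{\wt_{J'} \vla}$ yields $(\rho_{\wt_{J'} \vla}, \alpha_i) = 0$, which I expand as
\[
0 \;=\; |\wt_{J'} \vla|\,(\lambda, \alpha_i) \;-\; \sum_{\nu \in \wt_{J'} \vla} (\lambda - \nu,\, \alpha_i),
\]
where the first summand is $\geq 0$ (because $i \in J \subset J(\vla) \subset J_\lambda$ gives $\lambda(h_i) \in \Z_+$) and the second summand is $\leq 0$ (because each $\lambda - \nu \in \Z_+ \Delta_{J'}$ with $i \notin J'$ makes $(\alpha_j, \alpha_i) \leq 0$ for all $j \in J'$). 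Both must therefore vanish individually, and specializing $\nu = \mu_{k-1}$ produces $(\lambda - \mu_{k-1}, \alpha_i) = 0 = (\lambda, \alpha_i)$, whence $\mu_{k-1}(h_i) = 0$.

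With the identity in hand, the contradiction is standard $\mathfrak{sl}_2$-theory at the integrable simple root $\alpha_i$: since $i \in J \subseteq J(\vla)$, the module $\vla$ is locally finite under the copy of $\mathfrak{sl}_2$ spanned by $x_{\alpha_i}^\pm, h_i$, so the $\alpha_i$-string through $\mu_{k-1}$ is a finite symmetric interval $[-p, q]$ with $p - q = \mu_{k-1}(h_i) = 0$. The existence of $\mu_k = \mu_{k-1} - \alpha_i \in \wt \vla$ gives $p \geq 1$, hence $q \geq 1$, so $\mu_{k-1} + \alpha_i \in \wt \vla$. But $\lambda - (\mu_{k-1} + \alpha_i) = \sum_{j \in J'} m_j \alpha_j - \alpha_i$ has coefficient $-1$ on $\alpha_i \notin \Delta_{J'}$, contradicting $\wt \vla \subset \lambda - \Z_+ \Delta$. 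The only real obstacle I anticipate is the sign-splitting under the first hypothesis, which hinges simultaneously on $i$ being integrable (so $(\lambda, \alpha_i) \geq 0$) and $i$ lying outside $J'$ (so the pairings with $\Delta_{J'}$ have the opposite sign); the rest is a direct $\mathfrak{sl}_2$-reflection calculation applied at a single step of the chain.
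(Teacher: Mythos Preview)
Your proof is correct and follows the same overall architecture as the paper's: both reduce to the inclusion $\wt_{J\cup J'}\vla\subset\wt_{J'}\vla$, take a chain from $\lambda$ down to a putative counterexample, and isolate the first step $\mu_{k-1}\to\mu_{k-1}-\alpha_i$ with $i\in J\setminus J'$. The contradiction mechanisms, however, are genuinely different. Under the first hypothesis the paper argues that $(\mu_{k-1},\alpha_{i_k})>0$ by a Serre-relation commutation (if the pairing were zero, one could slide $x^-_{\alpha_{i_k}}$ past the earlier $x^-$'s to obtain $x^-_{\alpha_{i_k}}v_\lambda\neq 0$, contradicting Lemma~\ref{Lsimple}), then sums over $\wt_{J'}\vla$ to force $(\rho_{\wt_{J'}\vla},\alpha_{i_k})>0$, violating the assumed $W_{J\cup J'}$-invariance. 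You instead start from the invariance, perform a sign-splitting of $(\rho_{\wt_{J'}\vla},\alpha_i)=0$ to conclude $\mu_{k-1}(h_i)=0$, and then invoke $\alpha_i$-integrability: $s_i(\mu_{k-1}-\alpha_i)=\mu_{k-1}+\alpha_i\in\wt\vla$ (this reflection, via Theorem~\ref{T1}, already suffices and avoids appealing to the full string property). Your route avoids the monomial-word manipulation and Lemma~\ref{Lsimple} entirely, and has the pleasant feature that the same $\mathfrak{sl}_2$-reflection endgame handles both hypotheses uniformly; the paper's route keeps the two hypotheses separate and leans on the enveloping-algebra relations.
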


\begin{proof}
First assume that the setup of Proposition \ref{P11} holds. Suppose the
conclusion fails, i.e.,
\begin{equation}\label{Eweights1}
\mu = \lambda - \sum_{j \in J'} a_j \alpha_j - \sum_{j \in J \setminus
J'} a_j \alpha_j \in \wt_{J \cup J'} \vla \setminus \wt_{J'} \vla.
\end{equation}

\noindent As in the proof of Lemma \ref{Lweights}, produce a monomial
word $0 \neq x_{\alpha_{i_N}}^- \cdots x_{\alpha_{i_1}}^- v_\lambda \in
\vla_\mu$.
Then all indices are in $J \cup J'$; choose the smallest $k$ such that
$i_k \in J \setminus J'$, and define $\mu_{k-1} := \lambda -
\sum_{l=1}^{k-1} \alpha_{i_l} \in \wt_{J'} \vla$.
Now, $(\mu_{k-1}, \alpha_{i_k}) = (\lambda, \alpha_{i_k}) -
\sum_{l=1}^{k-1} (\alpha_{i_l}, \alpha_{i_k})$, and each term in the sum
is nonpositive since $i_l \in J', i_k \in J \setminus J'$. Since
$\alpha_{i_k} \in \Delta_{J \setminus J'} \subset \Delta_{J(\vla)}$,
hence $(\mu_{k-1}, \alpha_{i_k}) \geq 0$.

We first claim that $(\mu_{k-1}, \alpha_{i_k}) > 0$. Suppose not. Then
$(\lambda, \alpha_{i_k}) = (\alpha_{i_l}, \alpha_{i_k}) = 0\ \forall l <
k$, whence by the Serre relations, $[x^-_{\alpha_{i_l}},
x^-_{\alpha_{i_k}}] = 0\ \forall l < k$. Hence by the previous paragraph, 
\begin{equation}\label{Eweights2}
0 \neq x_{\alpha_{i_k}}^- \cdots x_{\alpha_{i_1}}^- v_\lambda
= x_{\alpha_{i_{k-1}} }^- \cdots x_{\alpha_{i_1}}^- x_{\alpha_{i_k}}^-
v_\lambda.
\end{equation}

\noindent In particular, $x^-_{\alpha_{i_k}} v_\lambda \neq 0$. But this
contradicts Lemma \ref{Lsimple} (with $J = I_0 = \{ i_k \} \subset
J(\vla)$), since $(\lambda, \alpha_{i_k}) = 0$. This proves the claim.
Moreover, as shown above for $\mu_{k-1}$, $(\mu, \alpha_{i_k}) \geq 0\
\forall \mu \in \wt_{J'} \vla$. Hence $(\rho_{\wt_{J'} \vla},
\alpha_{i_k}) > 0$ from the above analysis. But this contradicts the
$W_{J \cup J'}$-invariance of $\rho_{\wt_{J'} \vla}$, since $\alpha_{i_k}
\in \Delta_{J \setminus J'} \subset \Delta_{J \cup J'}$. This shows the
first assertion.

The second assertion is shown by essentially repeating the above proof;
here is a quick sketch. Suppose again that $\mu \in \wt \vla$ satisfies
\eqref{Eweights1}. Produce a monomial word $0 \neq x_{\alpha_{i_N}}^-
\cdots x_{\alpha_{i_1}}^- v_\lambda \in \vla_\mu$.
Choose the smallest index $k$ such that $i_k \in J \setminus J'$; then
\eqref{Eweights2} holds as well, since $(\alpha_{i_k}, \alpha_{i_l}) = 0$
for all $0 < l < k$ by assumption. Now since $i_k \in J \subset J(\vla)$
and $(\lambda, \alpha_{i_k}) = 0$, it follows that $x_{\alpha_{i_k}}^-
v_\lambda = 0$ in $\vla$, which contradicts \eqref{Eweights2}. Thus no
weight $\mu$ of the form \eqref{Eweights1} exists.
\end{proof}

\subsection{Proof of Theorem \ref{T5}}

Having proved Proposition \ref{P11}, we can show our (first) main result.

\begin{proof}[Proof of Theorem \ref{T5}]
First note that the sets of simple roots used in the formulas in
equations \eqref{Emin} \and \eqref{Emax} indeed depend only on
$\supp(\lambda), J$, and $J \setminus J(\vla) = J_2(\vla)$. Now let
$J_{\min}, J_{\max}$ denote the (first) expressions on the right-hand
sides of equations \eqref{Emin}, \eqref{Emax} respectively. We now prove
the various implications in the result, and also show the minimality and
maximality of these expressions $J_{\min}, J_{\max}$ respectively. The
proof is divided into steps for ease of exposition.\medskip

\noindent $\boldsymbol{(2) \Longleftrightarrow (3)}.$
We first record the following fact, and use it without reference in the
rest of the paper. Given $k>0$ and $J'_r, J''_s \subset I$,
\begin{equation}\label{Ecomp2}
\bigcap_{r=1}^k \wt_{J'_r} \vla \cap \bigcap_s \conv_\R (\wt_{J''_s}
\vla) = \wt_{\cap_r J'_r \cap_s J''_s} \vla.
\end{equation}

\noindent Now clearly $(2) \implies (3)$; the converse follows because
$(\wt \vla) \cap \conv_\R (\wt_J \vla) = \wt_J \vla$. For the same
reason, the assertions in this theorem are also equivalent to the
following statement:\smallskip

\textit{$(6)$ There exist $w,w' \in W_{J(\vla)}$ such that $w(\conv_\R
\wt_J \vla) = w'(\conv_\R \wt_{J'} \vla)$.}\smallskip

\noindent $\boldsymbol{(5) \implies (2)}.$
Note by (5) that
\[
\wt_{J_{\min} \sqcup (J \setminus J(\vla))} \vla \subset \wt_{J'} \vla
\subset \wt_{J_{\max} \sqcup (J \setminus J(\vla))} \vla.
\]

\noindent We now claim that the first and third terms in this chain are
equal. Indeed, note by definition of the sets $J_i(\vla)$ that (with a
slight abuse of notation,)
$J_5(\vla)$ is orthogonal to $\{ \lambda \} \cup J_2(\vla) \cup J_3(\vla)
\cup J_4(\vla)$. It follows from equations \eqref{Emin}, \eqref{Emax}
that $\Delta_{J_{\max} \setminus J_{\min}}$ is contained in
$\Delta_{J(\vla)}$ and orthogonal to $\{ \lambda \} \sqcup
\Delta_{J_{\min} \sqcup J_2(\vla)}$. Applying the second part of Lemma
\ref{Lweyl} then yields the claim, since
\begin{equation}\label{Eformula}
\wt_{J_{\min} \sqcup (J \setminus J(\vla))} \vla = \wt_{J'}
\vla = \wt_{J_{\max} \sqcup (J \setminus J(\vla))} \vla.
\end{equation}

\noindent In particular, the previous equality holds for $J' = J$ (since
(5) does too), and this shows (2).\medskip

\noindent $\boldsymbol{(2) \implies (5)}.$
First note via Theorem \ref{T1} that $J \setminus J(\vla) = \{ i \in I :
\lambda - \Z_+ \alpha_i \subset \wt_J \vla \}$. Thus (2) implies that $J
\setminus J(\vla) = J' \setminus J(\vla)$.
Now suppose $C \subset J_3(\vla) \subset J$ is a connected component of
$J \cap J(\vla)$, such that $\pi_C(\lambda) \neq 0$. Then
\[ \wt_{C \cap J'} \vla = \wt_C \vla \cap \wt_{J'} \vla = \wt_C \vla \cap
\wt_J \vla = \wt_C \vla, \]

\noindent and by \cite[Proposition 5.1]{KR}, the affine hull of $\wt_C
\vla$ is $\lambda - \R \Delta_C$. Hence the same holds for $\wt_{C \cap
J'} \vla$, whence $C \subset J'$. It follows that $J_3(\vla) \subset J'$.

Similarly, suppose $C \subset J_4(\vla)$ is a connected component of $J
\cap J(\vla)$, such that $\pi_C(\lambda) = 0 \neq (\Delta_{J \setminus
J(\vla)}, \Delta_C)$. Assume $(\alpha_{j_2}, \alpha_c) \neq 0$ for some
$j_2 \notin J(\vla)$ and $c \in C$. Since $C \cap (J \setminus J(\vla))$
is empty, it follows that $(\lambda - \alpha_{j_2}, \alpha_c) > 0$. Now
consider the highest weight $\lie{g}_C$-module $M := U(\lie{g}_C)
x_{\alpha_{j_2}}^- v_\lambda$. By integrability, $M$ is
finite-dimensional, hence isomorphic to $L_C(\pi_C(\lambda -
\alpha_{j_2}))$ as $\lie{g}_C$-modules. Since $C$ is connected and
$\pi_C(\lambda - \alpha_{j_2}) \in P_C^+ \setminus \{ 0 \}$ by the above
calculation, once again use \cite[Proposition 5.1]{KR} to obtain that the
affine hull of $\wt M$ is $\lambda - \R \Delta_C$. Hence the same holds
for $\wt_C \vla \cap \wt_{J'} \vla = \wt_{C \cap J'} \vla$, whence $C
\subset J'$. It follows that $J_4(\vla) \subset J'$.
Putting together the above analysis shows that $J_{\min} \subset J'$;
i.e., the expression in equation \eqref{Emin} is indeed minimal as
claimed. It also follows that if $\pi_C(\lambda) \neq 0$ or if $\Delta_C$
is not orthogonal to $\alpha_{j_2}$ for $j_2 \in J_2(\vla)$, then
$\pi_C(\lambda - \alpha_{j_2}) \neq 0$. Therefore $C \subset
J_3(M(\lambda - \alpha_{j_2}, J(\vla)))$, which implies the second
equality in \eqref{Emin}.

Finally, we claim that $J(\vla) \setminus J_{\max}$ is disjoint from
$J'$. The claim would imply that $J' \cap J(\vla) \subset J_{\max}$,
which would complete the proof that $(2) \implies (5)$, and also prove
that the penultimate expression in equation \eqref{Emax} is maximal as
asserted. To show the claim, fix an element
\begin{align*}
j \in J(\vla) &\ \setminus J_{\max} = J_6(\vla) \setminus J_{\max}\\
= &\ (J_6(\vla) \setminus \{ \lambda \}^\perp) \cup
((J_6(\vla) \cap \{ \lambda \}^\perp) \setminus J_2(\vla)^\perp)\\
& \cup ((J_6(\vla) \cap \{ \lambda \}^\perp) \setminus J_{\min}^\perp).
\end{align*}

We show in each of these three cases that $j \notin J'$, which would
complete the proof. First if $j \in J_6(\vla) \setminus \{ \lambda
\}^\perp$, then $s_j(\lambda) \in \wt \vla \setminus \wt_J \vla = \wt
\vla \setminus \wt_{J'} \vla$. Thus $j \notin J'$. Similarly if $j \in
(J_6(\vla) \cap \{ \lambda \}^\perp) \setminus J_2(\vla)^\perp$, then
choose $j' \in J \setminus J(\vla) = J_2(\vla)$ such that $(\alpha_j,
\alpha_{j'}) \neq 0$. Now $\lambda - \alpha_{j'} \in \wt \vla$ and $j \in
J(\vla)$, so Theorem \ref{T1} yields
\[ s_j(\lambda - \alpha_{j'}) = \lambda - s_j(\alpha_{j'}) \in \wt \vla
\setminus \wt_J \vla = \wt \vla \setminus \wt_{J'} \vla. \]

\noindent Once again, it follows that $j \notin J'$. Finally, suppose $j
\in (J_6(\vla) \cap \{ \lambda \}^\perp) \setminus J_{\min}^\perp$.
Choose $j_0 \in J_{\min}$ such that $(\alpha_j, \alpha_{j_0}) \neq 0$.
By equation \eqref{Emin} proved above, there are now two cases:
\begin{itemize}
\item The first possibility is that $j_0 \in J_3(\vla)$, i.e., $j_0$ is
in a connected component $C$ of $J \cap J(\vla)$ such that
$\pi_C(\lambda) \neq 0$. In this case write $\lambda -
w_\circ^C(\lambda) = \sum_{c \in C} n_c \alpha_c$ for $n_c \in
\Z_+$. Then $n_c > 0$ for all $c$ by \cite[Proposition 5.1]{KR}; in
particular, $n_{j_0} > 0$. Now by Theorem \ref{T1},
$s_j(w_\circ^C(\lambda)) \in \wt \vla \setminus \wt_J \vla = \wt \vla
\setminus \wt_{J'} \vla$.

\item Otherwise $j_0 \in J_4(\vla)$, i.e., $j_0$ is in a connected
component $C$ of $J \cap J(\vla)$ for which $\pi_C(\lambda) = 0$ but
$\Delta_C \not\perp \Delta_{J_2(\vla)}$. Suppose $(\Delta_C,
\alpha_{j_2}) \neq 0$ for some $j_2 \in J_2(\vla) = J \setminus J(\vla)$.
Consider the highest weight $\lie{g}_C$-module $M := U(\lie{g}_C)
x_{\alpha_{j_2}}^- v_\lambda$. By integrability, $\dim M < \infty$ and $M
\cong L_C(\pi_C(\lambda - \alpha_{j_2}))$ as $\lie{g}_C$-modules. Since
$C$ is connected and $\pi_C(\lambda - \alpha_{j_2}) \in P_C^+ \setminus
\{ 0 \}$ by choice of $j_2$, it follows via \cite[Proposition 5.1]{KR}
that the affine hull of $\wt M$ is $\lambda - \R \Delta_C$. Now write the
difference of the extremal weights of $M$ as a sum of positive roots;
thus,
$(\lambda - \alpha_{j_2}) - w_\circ^C(\lambda - \alpha_{j_2}) = \sum_{c
\in C} n_c \alpha_c$,
with $n_c > 0$ for all $c \in C$. In particular, $n_{j_0} > 0$.
Recall that $j \in J_6(\vla) \subset J(\vla)$, so $\mu :=
s_j(w_\circ^C(\lambda - \alpha_{j_2}))$ lies in $W_{J(\vla)}(\lambda -
\Delta_{J_2}) \subset \wt \vla$ by Theorem \ref{T1}. On the other hand,
since $(\alpha_j, \alpha_{j_0}) \neq 0$, it follows that $\lambda - \mu =
\sum_{i \in J(\vla)} n_i \alpha_i$ with $n_j > 0$ for $j \in J_6(\vla) =
J(\vla) \setminus J$. Therefore $\mu \in \wt \vla \setminus \wt_J \vla =
\wt \vla \setminus \wt_{J'} \vla$.
\end{itemize}

In either case the above analysis shows that $j \notin J'$. This yields
$J' \cap J(\vla) \subset J_{\max}$, proving that $(2) \implies (5)$. The
equivalence $(2) \Longleftrightarrow (5)$ also shows that the expressions
in equation \eqref{Emin} and the first expression in equation
\eqref{Emax} are indeed the desired, extremal subsets of weights. It is
now easily verified that the last two expressions in equation
\eqref{Emax} are equal, since $J_5(\vla) \subset J(\vla) \cap \{ \lambda
\}^\perp \cap J_{\min}^\perp \cap J_2(\vla)^\perp$.\medskip

\noindent $\boldsymbol{(2) \Longleftrightarrow (4)}.$
Clearly $(4) \implies (2)$. Conversely, we first \textit{claim} that
$U(\lie{g}_J) v_\lambda = U(\lie{g}_{J_{\min} \sqcup (J \setminus
J(\vla))}) v_\lambda$ for all $J \subset I$. Note that the claim,
together with $(2) \Longleftrightarrow (5)$, immediately implies (4).

The claim is proved by showing that each side is contained in the other.
One inclusion is obvious; conversely, $U(\lie{g}_J) v_\lambda$ is spanned
by the set $\mathcal{F}_J$ of words in the alphabet $\{ f_j : j \in J
\}$, applied to $v_\lambda$. Consider a subset $\mathcal{B} \subset
\mathcal{F}_J$ that corresponds to a weight basis of $U(\lie{g}_J)
v_\lambda$. Then using (5),
\[
\wt b \in -\lambda + \wt_J \vla = -\lambda + \wt_{J_{\min} \sqcup (J
\setminus J(\vla))} \vla \subset -\Z_+ \Delta_{J_{\min} \sqcup (J
\setminus J(\vla))}, \quad \forall b \in \mathcal{B}.
\]

\noindent Consequently, $\mathcal{B} \subset \mathcal{F}_{J_{\min} \sqcup
(J \setminus J(\vla))}$. This shows that $U(\lie{g}_J) v_\lambda$ is
contained in $U(\lie{g}_{J_{\min} \sqcup (J \setminus J(\vla))})
v_\lambda$. This proves the claim, and hence that $(2) \implies
(4)$.\medskip

\noindent $\boldsymbol{(1) \Longleftrightarrow (2)}.$
Clearly $(2) \implies (1)$. Conversely, suppose (1) holds and $j \in J'
\setminus J(\vla)$. Then by Theorem \ref{T1} and \cite[Proposition
4.4]{Khwf},
\begin{align*}
w^{-1}w' (\lambda) - \Z_+ (w^{-1} w' \alpha_j) = &\ w^{-1} w'(\lambda -
\Z_+ \alpha_j) \subset w^{-1} w'(\wt_{J'} \vla)\\
= &\ \wt_J \vla \subset \wt_J M(\lambda, J(\vla)).
\end{align*}

\noindent By \cite[Proposition 2.3]{KR}, this implies that $w^{-1}
w'(\alpha_j) \in \Z_+ \Delta_J \cap \Z_+ (\Phi^+ \setminus
\Phi^+_{J(\vla)})$. Therefore,
\[
w^{-1} w'(\alpha_j) \in \Phi \cap (\Z_+ \Delta_J) \cap (\Phi^+ \setminus
\Phi^+_{J(\vla)}) = \Phi^+_J \setminus \Phi^+_{J \cap J(\vla)} \subset
\Phi_{J \cup J(\vla)}.
\]

\noindent This implies that $\alpha_j \in W_{J(\vla)}(\Phi_{J \cup
J(\vla)}) = \Phi_{J \cup J(\vla)}$ for all $j \in J' \setminus J(\vla)$.
In particular, $J' \setminus J(\vla) \subset J \setminus J(\vla)$, and by
symmetry, the reverse inclusion holds as well.

The meat of this implication is in the claim that $J_{\min} \subset J'
\cap J(\vla)$. To show the claim we first describe our approach, in order
to clarify the subsequent detailed exposition. Choose finite,
distinguished $W_{J(\vla)}$-stable subsets $T^\mu \subset \wt \vla$ (for
certain weights $\mu \in Q^+$) and define $T^\mu_J := T^\mu \cap \wt_J
\vla$ for $J \subset I$. Then $w(T^\mu_J) = w'(T^\mu_{J'})$, so that
$w^{-1} w'(\pi_{J(\vla)} (\rho_{T^\mu_{J'}})) =
\pi_{J(\vla)}(\rho_{T^\mu_J})$ (this is only true after transporting the
situation via $\varpi_{J(\vla)}$ to $L_{J(\vla)}(\pi_{J(\vla)}(\lambda -
\mu))$). But $\pi_{J(\vla)}(\rho_{T^\mu_{J'}})$ and
$\pi_{J(\vla)}(\rho_{T^\mu_J})$ are both in $P^+_{J(\vla)}$, so they are
equal, whence their maximizer sets in $T^\mu_J, T^\mu_{J'}$ are equal.
This will yield the result by Proposition \ref{Pstable} by studying
specific sets $T^\mu$ for various $\mu$.

We now explain the details in the preceding paragraph. Define
\begin{equation}\label{Evinberg0}
\begin{aligned}
\mathbb{T}(\vla) := &\ \{ \mu \in Q^+_{I \setminus J(\vla)} : \lambda -
\mu \in \wt \vla \},\\
\forall \mu \in \mathbb{T}(\vla), \ T^\mu := &\ \wt L_{J(\vla)}(\lambda -
\mu) \subset \wt \vla.
\end{aligned}
\end{equation}

\noindent There is a slight abuse of notation here; note that $T^\mu$ is
distinct from\break
$\wt L_{J(\vla)}(\pi_{J(\vla)}(\lambda - \mu))$, which is contained in
$\pi_{J(\vla)}(\lambda) - \R \Delta_{J(\vla)}$. Moreover, $T^\mu$ is a
finite, $W_{J(\vla)}$-stable subset of $\wt \vla$ for all $\mu \in
\mathbb{T}(\vla)$.

Given $J \subset I$, define $T^\mu_J := T^\mu \cap \wt_J \vla$. Now
suppose (1) holds, i.e.,\break
$w^{-1} w'(\wt_{J'} \vla) = \wt_J \vla$.
Intersecting both sides with $T^\mu$ yields:
\[
T^\mu_J = T^\mu \cap w^{-1} w'(\wt_{J'} \vla) = w^{-1} w'(T^\mu_{J'}),
\]

\noindent whence $\rho_{T^\mu_J} = w^{-1} w'(\rho_{T^\mu_{J'}})$. Now
applying Lemma \ref{Lfacts},
\[
w^{-1} w'(\varpi_{J(\vla)}(\rho_{T^\mu_{J'}})) =
\varpi_{J(\vla)}(w^{-1} w'(\rho_{T^\mu_{J'}})) =
\varpi_{J(\vla)}(\rho_{T^\mu_J}).
\]

\noindent Note that $\varpi_{J(\vla)}(\rho_{T^\mu_J}) = \rho_{\wt_{J \cap
J(\vla)} M^\mu}$, where $M^\mu :=
L_{J(\vla)}(\pi_{J(\vla)}(\lambda-\mu))$ is a finite-dimensional
$\lie{g}_{J(\vla)}$-module because $\mu \in I \setminus J(\vla)$.
Similarly for $J'$ in place of $J$. Hence $w^{-1} w'(\rho_{\wt_{J' \cap
J(\vla)} M^\mu}) = \rho_{\wt_{J \cap J(\vla)} M^\mu}$. Apply Proposition
\ref{Pstable} over $\lie{g}_{J(\vla)}$; then $\rho_{\wt_{J \cap J(\vla)}
M^\mu}, \rho_{\wt_{J' \cap J(\vla)} M^\mu} \in P_{J(\vla)}^+$. Since
every $W_{J(\vla)}$-orbit contains at most one dominant element,
$\rho_{\wt_{J \cap J(\vla)} M^\mu} = \rho_{\wt_{J' \cap J(\vla)} M^\mu}$.
Therefore by Proposition \ref{Pstable} over $\lie{g}_{J(\vla)}$, the
corresponding maximizer subsets in $\wt M^\mu$ are equal:
\begin{align*}
\wt_{J \cap J(\vla)} M^\mu = &\ (\wt M^\mu)(\rho_{\wt_{J \cap J(\vla)}
M^\mu})\\
= &\ (\wt M^\mu)(\rho_{\wt_{J' \cap J(\vla)} M^\mu}) = \wt_{J' \cap
J(\vla)} M^\mu.
\end{align*}

Thus the problem is now reduced to a finite-dimensional situation over
the semisimple Lie algebra $\lie{g}_{J(\vla)}$. Introduce the following
notation for convenience:
\begin{equation}\label{Evinberg1}
\widetilde{J} := J \cap J(\vla), \qquad
\widetilde{J'} := J' \cap J(\vla), \qquad
\eta_\mu := \pi_{J(\vla)}(\lambda - \mu).
\end{equation}

\noindent Using this notation, the above analysis in the present step
shows that
\begin{equation}\label{Evinberg2}
w(\wt_J \vla) = w'(\wt_{J'} \vla) \ \implies \ \forall \mu \in
\mathbb{T}(\vla),\ \wt_{\widetilde{J}} L_{J(\vla)}(\eta_\mu) =
\wt_{\widetilde{J'}} L_{J(\vla)}(\eta_\mu).
\end{equation}

\noindent Now apply the equivalence $(2) \Longleftrightarrow (5)$ of this
theorem to equation \eqref{Evinberg2}. This yields:
\begin{equation}\label{Eminmax1}
\widetilde{J}_3(L_{J(\vla)}(\eta_\mu)) = \widetilde{J}_{\min} =
\widetilde{J'}_{\min} = \widetilde{J'}_3(L_{J(\vla)}(\eta_\mu)) \subset
\widetilde{J'} = J' \cap J(\vla).
\end{equation}

\noindent The final step in proving the claim that $J_{\min} \subset J'
\cap J(\vla)$, is to study equation \eqref{Eminmax1} for various special
values of $\mu$, namely, $\mu \in \{ 0 \} \sqcup \Delta_{J_2(\vla)}$.
If $\mu = 0$, then $\widetilde{J}_3(L_{J(\vla)}(\eta_0)) = (J \cap
J(\vla))_{\min} = J_3(\vla)$, so $J_3(\vla) \subset J' \cap J(\vla)$.
Next, if $\mu = \alpha_j$ for $j \in J_2(\vla)$, then by the above
analysis, $\widetilde{J}_3(L_{J(\vla)}(\eta_j))$ is the union of the
connected components $C$ in the Dynkin diagram of $J \cap J(\vla)$, which
satisfy: $\pi_C(\lambda - \alpha_j) \neq 0$. It follows from the
definitions that $J_3(\vla) \sqcup J_4(\vla) \subset J' \cap J(\vla)$. By
equation \eqref{Emin}, it follows that $J_{\min} \subset J' \cap
J(\vla)$, which proves the above claim.

The last step is to note that $J_{\min} \sqcup (J \setminus
J(\vla)) \subset J'$ from the above analysis, so by applying equation
\eqref{Eformula}, $\wt_J \vla = \wt_{J_{\min} \sqcup (J \setminus
J(\vla))} \vla \subset \wt_{J'} \vla$. The reverse inclusion is proved by
symmetry. Therefore $(1) \implies (2)$ holds and the proof is complete.
\end{proof}

\subsection*{Concluding remarks: negative results}

We conclude this section by discussing a couple of related results that
are negative. Given Theorem \ref{T5}, it is natural to ask if the
condition $\wt_J \vla = \wt_{J'} \vla$ is equivalent to the following
``simpler'' conditions:
\begin{equation}\label{Enotminmax}
J \setminus J(\vla) = J' \setminus J(\vla), \qquad \wt_{J \cap J(\vla)}
\vla = \wt_{J' \cap J(\vla)} \vla.
\end{equation}

\noindent The answer to this question is: not always. Indeed $\wt_J \vla
= \wt_{J'} \vla$ implies \eqref{Enotminmax}; however, the converse is not
true because the sets $J_4(\vla)$ and $J'_4(\vla)$ may not coincide. For
a concrete example, let $\lie{g} = \lie{sl}_3$ and consider $\lambda = c
\omega_2$ for $c \in \C$, and $\vla = M(\lambda, \{ 1 \}) = U(\lie{g}) /
U(\lie{g}) (\ker \lambda + \lie{n}^+ + \C x_1^-)$. Thus $I = \{ 1, 2 \}$
and $J(\vla) = \{ 1 \}$. Now it is easily verified that $J = \{ 1, 2 \}$
and $J' = \{ 2 \}$ satisfy equation \eqref{Enotminmax}, since $\wt_{ \{ 1
\}} \vla = \{ \lambda \} = \wt_\emptyset \vla$. On the other hand,
$s_1(\lambda - \alpha_2) = \lambda - \alpha_1 - \alpha_2$, so $\wt_{J'}
\vla = \lambda - \Z_+ \alpha_2 \subsetneq \wt_J \vla$.

A related observation is that an approach based on equation
\eqref{Enotminmax} leads only up to $J_3(\vla)$, while $J_{\min} =
J_3(\vla) \sqcup J_4(\vla)$. However this is not an obstruction if one
recalls that by equation \eqref{Emin}, $J_{\min}$ can be expressed only
using ``$J_3$-type'' sets for various highest weight modules.

A second question arises upon observing that if $\wt_{J'} \vla = \wt_{J
\cup J'} \vla$, then obviously $\wt_J \vla \subset \wt_{J'} \vla$. Given
Theorem \ref{T5}, it is natural to ask if the converse always holds as
well. It turns out that this is not the case; for example, suppose $\lie
g = \lie{sl}_3$, $\lambda = (c+1) \omega_2 \in P^+ \setminus \{ 0 \}$
with $c \in \Z_+$, and $\vla = L(\lambda)$ is simple. Then,
\[ \wt_{\{ 1 \}} L(\lambda) = \{ \lambda \} \subsetneq \wt_{\{ 2 \}}
L(\lambda) \subsetneq \wt_{\{ 1, 2 \}} L(\lambda) = \wt L(\lambda). \]

\section{Faces of highest weight modules: combinatorial results} 

In this section we apply Theorem \ref{T5} in order to study highest
weight modules in greater detail. The goal of this section is to prove
Theorem \ref{Tfinite}. We begin by explaining Remark \ref{R23}, which
discussed how the notion of a Weyl polytope was extended in \cite{Khwf}
to apply to general highest weight modules.

\begin{prop}\label{P23}
Suppose $(\lambda, \vla)$ satisfy any of the four assumptions in Theorem
\ref{T23}:
(a) $\lambda(h_i) \neq 0\ \forall i \in I$ and $\vla$ is arbitrary;
(b) $|J_\lambda \setminus J(\vla)| \leq 1$ (e.g., if $\vla$ is simple for
any $\lambda \in \lie{h}^*$);
(c) $\vla = M(\lambda, J')$ for some $J' \subset J_\lambda$; or
(d) $\vla$ is pure (in the sense of \cite{Fe}).

Then equation \eqref{E23} holds: $\conv_\R \wt \vla = \conv_\R \wt
M(\lambda, J(\vla))$. In turn, equation \eqref{E23} implies all of the
conclusions in Theorem \ref{T23}.
\end{prop}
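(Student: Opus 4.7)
The plan is to prove the two asserted implications in succession. A unifying observation is that by Theorem~\ref{T1}, the parabolic Verma $M(\lambda, J(\vla))$ has integrability set exactly $J(\vla)$, so it itself satisfies hypothesis~(c) of Theorem~\ref{T23}; in particular all conclusions of Theorem~\ref{T23} are already available for $M(\lambda, J(\vla))$. Moreover, since $\vla$ automatically satisfies the parabolic integrability relations that cut out $M(\lambda, J(\vla))$, there is a canonical surjection $M(\lambda, J(\vla)) \twoheadrightarrow \vla$, giving $\wt \vla \subset \wt M(\lambda, J(\vla))$, and hence one inclusion in equation~\eqref{E23} for free.

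For the first half -- that each of (a)--(d) implies \eqref{E23} -- case~(c) is tautological since $\vla = M(\lambda, J')$ forces $J(\vla) = J'$ by Theorem~\ref{T1}. For the other cases, the reverse inclusion of convex hulls reduces to verifying two claims: (i)~$W_{J(\vla)}(\lambda) \subset \wt \vla$, and (ii)~$\lambda - \Z_+ \alpha_i \subset \wt \vla$ for every $i \in I \setminus J(\vla)$. Combined with the $W_{J(\vla)}$-stability of $\wt \vla$ (Theorem~\ref{T1}), (ii) spreads these rays out to cover all of $-\R_+ (\Phi^+ \setminus \Phi^+_{J(\vla)})$, the recession cone of $\conv_\R \wt M(\lambda, J(\vla))$; together with (i) this recovers the full polyhedron. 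Claim~(i) is immediate from Theorem~\ref{T1}(c). For~(ii): under~(a), $\lambda(h_i) \neq 0$ so Lemma~\ref{Lsimple} gives $x_i^- v_\lambda \neq 0$, and a short $\lie{sl}_2$-argument using $i \notin J(\vla)$ extends this to $(x_i^-)^n v_\lambda \neq 0$ for all $n \geq 0$. Under~(b), for the at-most-one index $i \in J_\lambda \setminus J(\vla)$ the singular-vector analysis at $(x_i^-)^{\lambda(h_i)+1} v_\lambda$ (from Theorem~\ref{T1}) produces an unbroken $\Z_+$-string along $\alpha_i$; for any remaining $i \notin J_\lambda$, $\lambda(h_i) \notin \Z_+$ forces the local $\lie{sl}_2$-module to be Verma-like, again yielding (ii). Under~(d), the purity of $\vla$ in the sense of Fernando ensures that each non-integrable simple direction contributes a full $\Z_+$-ray of weights.

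For the second half -- that \eqref{E23} implies the conclusions of Theorem~\ref{T23} -- the polyhedral structure, vertex set $W_{J(\vla)}(\lambda)$, and $W$-stabilizer of $\conv_\R \wt \vla$ transfer immediately from the parabolic Verma case via~\eqref{E23}. The stabilizer of $\wt \vla$ itself in $W$ contains $W_{J(\vla)}$ by Theorem~\ref{T1}(d) and is contained in the stabilizer of its convex hull, hence equals $W_{J(\vla)}$. For the maximizer characterization, every supporting hyperplane of $\conv_\R \wt \vla = \conv_\R \wt M(\lambda, J(\vla))$ cuts out a face which is $W_{J(\vla)}$-conjugate, by the parabolic Verma case, to a standard parabolic face $\conv_\R \wt_J M(\lambda, J(\vla))$, defined via~\eqref{Eface} by the vanishing of $(\rho_{I \setminus J}, \lambda - \cdot)$. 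Intersecting with $\wt \vla$ yields $\wt \vla \cap (\lambda - \R_+ \Delta_J) = \wt_J \vla$, and the $W_{J(\vla)}$-action then produces the general form $w(\wt_J \vla)$.

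The main obstacle I anticipate is step~(ii) under hypothesis~(d): purity in Fernando's sense is a subtler structural property than the explicit $\lie{sl}_2$-integrability arguments that dispatch (a)--(c), so extracting the ray $\lambda - \Z_+ \alpha_i \subset \wt \vla$ for $i \notin J(\vla)$ will likely require invoking Fernando's classification of pure modules in detail. All other steps reduce cleanly to the already-available parabolic Verma case together with Theorem~\ref{T1} and Lemma~\ref{Lsimple}.
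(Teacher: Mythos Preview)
The paper's own proof is a bare citation to \cite{Khwf}, so there is no detailed argument to compare against; your approach is genuinely different in that you attempt a self-contained polyhedral argument.

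Your second half (that \eqref{E23} implies the conclusions of Theorem~\ref{T23}) is fine. In the first half, however, there is a significant mismatch between what you verify and what actually carries the weight. Your claims (i) $W_{J(\vla)}(\lambda) \subset \wt \vla$ and (ii) $\lambda - \Z_+ \alpha_i \subset \wt \vla$ for $i \notin J(\vla)$ hold for \emph{every} highest weight module, with no hypotheses whatsoever: (i) is Theorem~\ref{T1}(c), and (ii) follows immediately from the definition of $J(\vla)$ together with the fact that a highest weight $\lie{sl}_2$-module in which $f^{m+1} v \neq 0$ (respectively, with highest weight $\notin \Z_+$) is the full Verma. So your entire case-by-case analysis of (ii) under (a), (b), (d) is unnecessary---and in cases (b) and (d) it is also vaguer than the uniform two-line $\lie{sl}_2$ argument.

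This means the real content of your first half lies entirely in the unproved assertion that (i), (ii), and $W_{J(\vla)}$-stability together force the reverse inclusion of convex hulls. Concretely, you need two facts you do not justify: first, that $\conv_\R \wt M(\lambda, J(\vla))$ decomposes as the Minkowski sum $\conv_\R W_{J(\vla)}(\lambda) + C$ with recession cone $C = -\R_+(\Phi^+ \setminus \Phi^+_{J(\vla)})$; second, that this cone is already generated by $\{ -w(\alpha_i) : w \in W_{J(\vla)},\ i \notin J(\vla) \}$, so that the $W_{J(\vla)}$-spread of the rays from (ii) recovers all of $C$. Both statements are true (the first from the PBW factorization of the parabolic Verma, the second by an induction on root height using that $W_J$ permutes $\Phi^+ \setminus \Phi^+_J$), but neither is immediate, and you gloss over precisely this step. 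Note also the consequence: once these are filled in, your argument establishes \eqref{E23} for \emph{all} $\vla$, not just under (a)--(d), which is strictly stronger than the proposition as stated.
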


\begin{proof}
It is not hard to show that both parts of this result follow from the
proofs of \cite[Theorems B and C]{Khwf}. In fact, the condition
\eqref{E23} implies all of the conclusions of \cite[Theorems B and
C]{Khwf}.
\end{proof}

In order to prove Theorem \ref{Tfinite}, two additional preliminary
results are required. The first result involves the barycenter of a
finite standard parabolic subset of weights of $\wt \vla$.

\begin{lemma}\label{Lfinite}
If $\lambda \in \lie{h}^*, M(\lambda) \twoheadrightarrow \vla$, and
$J \subset I$, then $\avg(\wt_{J \cap J(\vla)} \vla) = \avg(W_{J \cap
J(\vla)}(\lambda))$. In other words, the barycenter of the set $\wt_{J
\cap J(\vla)} \vla$ coincides with that of the vertices of its convex
hull; moreover, this vector lies in $\Q_+ \Omega_{J_\lambda \setminus (J
\cap J(\vla))_{\max}} \times \C \Omega_{I \setminus J_\lambda}$.
\end{lemma}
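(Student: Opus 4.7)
The plan is to set $J_0 := J \cap J(\vla) \subset J(\vla)$ and reduce everything to elementary Weyl-group symmetry together with the previously established Theorems \ref{T1}, \ref{T5} and Propositions \ref{P11}, \ref{Pstable}. By Theorem \ref{T1}, the set $\wt_{J_0} \vla$ is finite and $W_{J_0}$-stable, and by Proposition \ref{P11} the vertices of $\conv_\R \wt_{J_0} \vla$ are exactly $W_{J_0}(\lambda)$. Since $J_0 \subset J_\lambda$, the highest weight $\lambda$ is $J_0$-dominant integral, so $W_{J_0}(\lambda) \subset \lambda - \Z_+ \Delta_{J_0}$; the same containment is automatic for $\wt_{J_0} \vla$. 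Hence both barycenters $\avg(\wt_{J_0} \vla)$ and $\avg(W_{J_0}(\lambda))$ lie in $\lambda - \Q_+ \Delta_{J_0}$.

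For the first assertion, each barycenter is $W_{J_0}$-invariant because its defining set is $W_{J_0}$-stable, so the difference $\avg(\wt_{J_0} \vla) - \avg(W_{J_0}(\lambda))$ is a $W_{J_0}$-fixed vector in $\Q \Delta_{J_0}$. Via the orthogonal decomposition $\liehr^* = \R \Delta_{J_0} \oplus \R \Omega_{I \setminus J_0}$ for the Killing form, the $W_{J_0}$-fixed subspace is precisely $\R \Omega_{I \setminus J_0}$: the form restricts positive-definitely to $\R \Delta_{J_0}$, and standard Weyl-group theory applied to each connected component of the Dynkin diagram of $J_0$ forces the fixed subspace of $W_{J_0}$ on $\R \Delta_{J_0}$ to be trivial. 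Thus the difference vanishes, establishing the equality of the two barycenters.

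For the support assertion, my plan is to first invoke Proposition \ref{Pstable}(1), which places $\rho_{\wt_{J_0} \vla}$ in $P^+_{J_\lambda \setminus J_0} \times \C \Omega_{I \setminus J_\lambda}$; dividing by $|\wt_{J_0} \vla|$ shows $\avg(\wt_{J_0} \vla) \in \Q_+ \Omega_{J_\lambda \setminus J_0} \times \C \Omega_{I \setminus J_\lambda}$. To upgrade $J_0$ to $(J_0)_{\max}$, I apply Theorem \ref{T5} to $J_0 \subset J(\vla)$ (so that $(J_0)_{\min} \subset J_0 \subset (J_0)_{\max}$), obtaining the equality $\wt_{J_0} \vla = \wt_{(J_0)_{\max}} \vla$. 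Since $(J_0)_{\max} \subset J(\vla)$, Theorem \ref{T1} guarantees that this common set is $W_{(J_0)_{\max}}$-stable, so its barycenter is $W_{(J_0)_{\max}}$-fixed, which forces the $\omega_j$-coefficient to vanish for every $j \in (J_0)_{\max}$. This gives the refined location in $\Q_+ \Omega_{J_\lambda \setminus (J_0)_{\max}} \times \C \Omega_{I \setminus J_\lambda}$.

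I do not foresee a genuine obstacle: the argument ultimately reduces to $W_{(J_0)_{\max}}$-symmetry plus the support information already supplied by Proposition \ref{Pstable}. The only bit of care needed is confirming that Theorem \ref{T5} applies cleanly with $J = J_0 \subset J(\vla)$ — in which case $J_2(\vla)$ and $J_4(\vla)$ both vanish, and the formulas for $(J_0)_{\min}$ and $(J_0)_{\max}$ of Theorem \ref{T5} agree with the quantities obtained directly from Definition \ref{Dminmax}.
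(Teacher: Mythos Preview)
Your proof is correct. The second half of your argument (the support assertion) matches the paper's approach exactly: both invoke Proposition \ref{Pstable}(1) and then upgrade from $J_0$ to $(J_0)_{\max}$ via Theorem \ref{T5}.

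For the first equality, however, you take a genuinely different and more elementary route. The paper transfers the problem via $\varpi_{J(\vla)}$ and Lemma \ref{Lfacts} to the finite-dimensional $\lie{g}_{J(\vla)}$-module $L_{J(\vla)}(\pi_{J(\vla)}(\lambda))$, and then cites the external result \cite[Proposition 5.2]{KR} to conclude. You instead argue directly: both barycenters are $W_{J_0}$-fixed and lie in $\lambda + \Q\Delta_{J_0}$, so their difference is a $W_{J_0}$-fixed vector in $\Q\Delta_{J_0}$, which must vanish since the fixed subspace of $W_{J_0}$ on $\R\Delta_{J_0}$ is trivial. Your argument is self-contained and avoids both the transfer principle and the external citation; the paper's approach has the advantage of tying the result explicitly to the finite-dimensional theory already developed in \cite{KR}, which is thematically consistent with how several other proofs in the paper proceed.
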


Note that this result specializes to \cite[Theorem 1.2(3)]{CM} when
$\lie{g}$ is simple and $\vla$ is the adjoint representation $\lie{g} =
L(\theta)$ (via the dictionary mentioned in Section \ref{Scellini}). The
result also extends Proposition \ref{Pstable}(1). Further note that $(J
\cap J(\vla))_{\max}$ can be computed using Theorem \ref{T5}.

\begin{proof}
Consider the $\lie{g}_{J(\vla)}$-submodule $U(\lie{g}_{J(\vla)})
v_\lambda \cong L_{J(\vla)}(\lambda)$ of $\vla$.
Use Lemma \ref{Lfacts} and \cite[Proposition 5.2]{KR} to obtain that
\begin{equation}\label{Eaverage}
\avg(\wt_{J \cap J(\vla)} \vla) - (\lambda - \pi_{J(\vla)}(\lambda)) =
\avg (W_{J \cap J(\vla)}(\lambda)) - (\lambda - \pi_{J(\vla)}(\lambda)).
\end{equation}
This proves the first equality. The last assertion follows from
Proposition \ref{Pstable} and Theorem \ref{T5}, with $J$ replaced by $J
\cap J(\vla)$.
\end{proof}

The second result proves some of the assertions in Theorem \ref{Tfinite},
including a stabilizer subgroup computation in the finite-dimensional
setting.

\begin{prop}\label{Pfinite}
Suppose $\lambda \in \lie{h}^*$, $M(\lambda) \twoheadrightarrow \vla$,
and $J \subset I$.
\begin{enumerate}
\item In the Weyl group $W$, $W_{J_{\max}} = W_{J_{\min}} \times
W_{J_{\max} \setminus J_{\min}}$. Here $W_{J_{\max} \setminus J_{\min}}$
fixes the face $\conv_\R \wt_J \vla$ pointwise, while no element of
$W_{J_{\max}} \setminus W_{J_{\max} \setminus J_{\min}}$ does so.

\item Let $J \subset J(\vla)$. Then the stabilizer subgroups in
$W_{J(\vla)}$ of $\conv_\R (\wt_J \vla)$ and of (the average of) $\wt_J
\vla$ agree, and equal $W_{J_{\max}}$.
\end{enumerate}
\end{prop}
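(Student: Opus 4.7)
For part~(1), the direct product decomposition is immediate from equation~\eqref{Emax}: since $J_{\max} \setminus J_{\min} \subset J_{\min}^\perp$, the simple roots indexed by these two sets are mutually orthogonal, so $W_{J_{\min}}$ and $W_{J_{\max} \setminus J_{\min}}$ commute and intersect trivially. To see that $W_{J_{\max} \setminus J_{\min}}$ fixes $\conv_\R \wt_J \vla$ pointwise, the plan is to use Theorem~\ref{T5} to rewrite $\wt_J \vla = \wt_{J_{\min} \sqcup (J \setminus J(\vla))} \vla$, so that every weight $\nu \in \wt_J \vla$ has the form $\lambda - \sum_{k \in J_{\min} \sqcup J_2(\vla)} a_k \alpha_k$ with $a_k \in \Z_+$. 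For any $i \in J_{\max} \setminus J_{\min}$, formula~\eqref{Emax} forces $\alpha_i$ to be orthogonal to $\lambda$, to $\Delta_{J_{\min}}$, and to $\Delta_{J_2(\vla)}$; hence $\nu(h_i) = 0$ and $s_i \nu = \nu$ for every such $\nu$, proving pointwise fixing.

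For the converse in part~(1), factor $w = w_1 w_2 \in W_{J_{\max}}$ with $w_1 \in W_{J_{\min}}$ and $w_2 \in W_{J_{\max} \setminus J_{\min}}$, and suppose $w$ fixes $\conv_\R \wt_J \vla$ pointwise; by the previous paragraph, $w_1$ must also fix it pointwise. I then split $W_{J_{\min}} = W_{J_3(\vla)} \times W_{J_4(\vla)}$ and show that the factor $w_C$ of $w_1$ on each connected component $C$ of $J_{\min}$ is trivial. For $C \subset J_3(\vla)$, the $\lie{g}_C$-submodule $U(\lie{g}_C) v_\lambda \cong L_C(\pi_C(\lambda))$ (nontrivial since $\pi_C(\lambda) \neq 0$) has its weights in $\wt_J \vla$ with affine hull $\lambda + \R\Delta_C$ by \cite[Proposition 5.1]{KR}; for $C \subset J_4(\vla)$, the same result applied to $U(\lie{g}_C) x_{\alpha_{j_2}}^- v_\lambda \cong L_C(\pi_C(\lambda - \alpha_{j_2}))$ for a suitable $j_2 \in J_2(\vla)$ with $(\alpha_{j_2}, \Delta_C) \neq 0$ (exactly as in the proof of Theorem~\ref{T5}) gives affine hull $\lambda - \alpha_{j_2} + \R\Delta_C$. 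In both cases $w_C$ fixes a translate of $\R\Delta_C$ pointwise, hence acts trivially on $\R\Delta_C$, and the faithful action of $W_C$ on this subspace forces $w_C = 1$. Iterating over all components yields $w_1 = 1$ and $w \in W_{J_{\max} \setminus J_{\min}}$.

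For part~(2) (where $J \subset J(\vla)$), any $w \in W_{J(\vla)}$ stabilizing $\conv_\R \wt_J \vla$ automatically stabilizes $\wt \vla \cap \conv_\R \wt_J \vla = \wt_J \vla$ by equation~\eqref{Ecomp2} (and the $W_{J(\vla)}$-invariance of $\wt \vla$ from Theorem~\ref{T1}), hence fixes the barycenter $\mu := \avg(\wt_J \vla)$. Conversely, part~(1) together with the $W_J$-stability of $\wt_J \vla$ (Theorem~\ref{T1}) shows that $W_{J_{\max}}$ stabilizes $\conv_\R \wt_J \vla$, so the conclusion reduces to proving that the stabilizer of $\mu$ in $W_{J(\vla)}$ equals $W_{J_{\max}}$. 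The plan is this: the $W_J$-invariance of $\mu$ from Proposition~\ref{Pstable}, together with the $W_J$-stable decomposition $\liehr^* = \R\Delta_J \oplus \R\Omega_{I \setminus J}$ (whose first summand has no nonzero $W_J$-invariants, and whose second is fixed pointwise), places $\mu \in \R\Omega_{I \setminus J}$. Writing $\mu = \sum_{i \in I \setminus J} b_i \omega_i$, Lemma~\ref{Lfinite} forces $b_i \in \Q_+$ for $i \in J_\lambda \setminus J_{\max}$ and $b_i = 0$ for $i \in J_{\max}$. Since $J(\vla) \subset J_\lambda$, this makes $\mu$ dominant for $W_{J(\vla)}$, and its stabilizer is therefore the standard parabolic $W_K$ with $K = \{i \in J(\vla) : \mu(h_i) = 0\}$, which already contains $J_{\max}$.

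The main obstacle is the reverse inclusion $K \subset J_{\max}$, i.e., ruling out $b_i = 0$ for any $i \in J(\vla) \setminus J_{\max}$. Here, since $i \notin J$, the identity $\mu(h_i) = \frac{1}{|\wt_J \vla|} \sum_{\nu \in \wt_J \vla} \nu(h_i)$ with $\nu = \lambda - \sum_{j \in J} a_j \alpha_j$ gives $\nu(h_i) = \lambda(h_i) + \sum_{j \in J} a_j (-\alpha_j(h_i))$, a sum of nonnegative terms (using $a_j \in \Z_+$ and $\alpha_j(h_i) \leq 0$ for $j \neq i$). Vanishing of this average forces $\lambda(h_i) = 0$ and $a_j = 0$ for every such $\nu$ and every $j \in J$ adjacent to $i$, hence $\wt_J \vla = \wt_{J \setminus N_J(i)} \vla$, where $N_J(i)$ denotes the set of neighbors of $i$ in $J$. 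Applying the equivalence $(2) \Longleftrightarrow (5)$ of Theorem~\ref{T5} (with $J' = J \setminus N_J(i)$) yields $J_{\min} \cap N_J(i) = \emptyset$, i.e., $i \in J_{\min}^\perp$. Combined with $i \in (J(\vla) \setminus J) \cap \{\lambda\}^\perp$ and the vanishing of $J_2(\vla)$, equation~\eqref{Emax} now places $i \in J_{\max}$, the desired contradiction.
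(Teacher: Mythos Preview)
Your argument is correct in both parts, and in each part you take a route genuinely different from the paper's.

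For part~(1), the paper proceeds by showing directly that any $w \in W_{J_{\min}}$ fixing $\wt_J \vla$ must fix each simple root $\alpha_j$ with $j \in J_{\min}$: it first observes that $w$ fixes $\lambda$ and the $\lambda - \alpha_{j_2}$, hence $\Delta_{J_2(\vla)}$, and then uses Lemma~\ref{Lweights} to walk down to weights whose $\alpha_{j_3}$- (resp.\ $\alpha_{j_4}$-) coefficient is positive, forcing $w(\alpha_{j_3}) = \alpha_{j_3}$ one root at a time. Your approach instead invokes the affine-hull computation from \cite[Proposition~5.1]{KR} on each connected component $C$ of $J_{\min}$ and the faithfulness of $W_C$ on $\R\Delta_C$, which is slightly cleaner conceptually. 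One phrasing issue: it is $w_1$, not $w_C$, that fixes the translate of $\R\Delta_C$ pointwise; what you actually use is that $w_1$ fixes the \emph{differences} of these weights (which span $\R\Delta_C$), and since every $w_{C'}$ with $C' \neq C$ acts trivially on $\R\Delta_C$, so does $w_C$. This is an easy rewording and the conclusion $w_C = 1$ is unaffected.

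For part~(2), the paper shows the key inclusion $K \subset J_{\max}$ by appealing to Proposition~\ref{Pstable}: the functional $(\avg(\wt_J \vla), -)$ is maximized on $\wt \vla$ exactly at $\wt_{J_{\max}} \vla$, so any simple reflection $s_i$ fixing the barycenter must have $i \in J_{\max}$ by the maximality clause of Theorem~\ref{T5}. Your argument is more elementary and self-contained: you expand $\mu(h_i)$ as an average of visibly nonnegative terms, and read off from its vanishing that $\lambda(h_i) = 0$ and that no weight of $\wt_J \vla$ involves any $\alpha_j$ adjacent to $i$, whence $\wt_J \vla = \wt_{J \setminus N_J(i)} \vla$ and Theorem~\ref{T5} places $i$ in $J_{\min}^\perp$. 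This bypasses the maximizer machinery of Proposition~\ref{Pstable} entirely, at the cost of a short direct computation; both approaches ultimately land on the same appeal to equation~\eqref{Emax}. (A minor point: your decomposition $\liehr^* = \R\Delta_J \oplus \R\Omega_{I \setminus J}$ should be stated over $\C$ since $\lambda$ need not be real, but this step is in any case subsumed by Lemma~\ref{Lfinite}, which already places $\mu$ in $\Q_+ \Omega_{J_\lambda \setminus J_{\max}} \times \C\Omega_{I \setminus J_\lambda}$.)
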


\begin{proof}
(1) It follows from equations \eqref{Emin} and \eqref{Emax} and the
definitions that\break
$\Delta_{J_{\max} \setminus J_{\min}}$ is orthogonal to $\{ \lambda \}
\sqcup \Delta_{J_{\min} \sqcup J_2(\vla)}$. Therefore by Theorem
\ref{T5}, $W_{J_{\max} \setminus J_{\min}}$ fixes $\wt_J \vla$ (and hence
its convex hull) pointwise. It also follows that $W_{J_{\max} \setminus
J_{\min}}$ commutes with $W_{J_{\min}}$ in $W$.

It remains to prove no element of $W_{J_{\max}} \setminus
W_{J_{\max} \setminus J_{\min}}$ fixes all of $\conv_\R \wt_J \vla$.
Indeed, suppose $w \in W_{J_{\max}}$ fixes $\conv_\R \wt_J \vla$
pointwise. Write $w = w_1 w_2$, where $w_1 \in W_{J_{\min}}, w_2 \in
W_{J_{\max} \setminus J_{\min}}$. Then $w$ fixes $\conv_\R \wt_J \vla$
pointwise if and only if $w_1$ does so. We claim this happens if and only
if $w_1 = 1$. In fact, we show the stronger statement that no nontrivial
$w \in W_{J_{\min}}$ fixes $\wt_J \vla$.

To show this statement, first note that any $w \in W_{J_{\min}}$ which
fixes $\wt_J \vla$ must fix $\lambda$ and $\lambda - \alpha_{j_2}$ for
all $j_2 \in J_2(\vla)$, so it fixes $\Delta_{J \setminus J(\vla)}$.
Next, the minimality of $(J \cap J(\vla))_{\min} = J_3(\vla)$ implies
shows that for all $j_3 \in J_3(\vla)$, there exists a weight $\mu \in
\wt_{J_3(\vla)} \vla \subset \wt_J \vla$ such that $\lambda - \mu =
\sum_{j \in J_3(\vla)} c_j \alpha_j$ with $c_{j_3} > 0$. Using Lemma
\ref{Lweights}, it follows that $w$ fixes $\alpha_{j_3}$ for all $j_3 \in
J_3(\vla)$.
Next, if $j_4 \in J_4(\vla)$ then let $C$ be the connected component
of the Dynkin diagram of $J \cap J(\vla)$ such that $j_4 \in C \subset
J_4(\vla)$. Choose $j_2 \in J_2(\vla)$ such that $(\alpha_{j_2},
\alpha_{j_4}) \neq 0$; then $\pi_C(\lambda - \alpha_{j_2}) \neq 0$. Now
recall from the proof of $(1) \implies (2)$ in Theorem \ref{T5} that $\wt
L_{J(\vla)}(\lambda - \alpha_{j_2}) \subset \wt_J \vla$. Since $w$ fixes
$\lambda$ as well as $\Delta_{J_2(\vla) \sqcup J_3(\vla)}$,
an argument similar to that for $J_3(\vla)$ above shows that $w$ also
fixes $\alpha_{j_4}$, and hence all of $\Delta_{J_4(\vla)}$. It follows
by equation \eqref{Emin} that $w \in W_{J_{\min}}$ fixes
$\Delta_{J_{\min}}$, and hence sends no positive root in
$\Phi_{J_{\min}}$ to $\Phi^-$. Therefore $w$ has length zero in
$W_{J_{\min}}$, i.e.~$w=1$ as claimed.\medskip

(2) Define the sets $S_j \subset \lie{h}^*$ for $1 \leq j \leq 5$ as
follows:
\begin{alignat*}{7}
S_1 := &\ \wt_J \vla, \qquad &
S_2 := &\ \conv_\R \wt_{J_{\max}} \vla, \qquad &
S_3 := &\ W_{J_{\max}}(\lambda),\\
S_4 := &\ \{ \avg(S_1) \}, \qquad &
S_5 := &\ \{ \pi_{J(\vla)} (\avg(S_1)) \}. &
\end{alignat*}

\noindent Now define $W_j := {\rm stab}_{W_{J(\vla)}} S_j$ to be the
respective stabilizer subgroup in $W_{J(\vla)}$ of $S_j$, for $1 \leq j
\leq 5$. We then claim that $W_{J_{\max}} \subset W_1 \subset \dots
\subset W_5 \subset W_{J_{\max}}$, which shows that these subgroups are
all equal and proves this part.

To prove the claim, note by Theorem \ref{T5} that $S_1 = \wt_{J_{\max}}
\vla$ is stable under $W_{J_{\max}}$, so that $W_{J_{\max}} \subset W_1$.
Next, it is clear that $W_1 \subset W_2$, and Proposition \ref{P11} (with
$J$ replaced by $J_{\max}$) shows that $W_2 \subset W_3$. Moreover,
equation \eqref{Eaverage} shows that $\avg \wt_J \vla = \avg
\wt_{J_{\max}} \vla = \avg S_3$. Hence $W_3 \subset W_4$.
Next, write $\avg(S_1) = \pi_{J(\vla)}(\avg(S_1)) + \pi_{I \setminus
J(\vla)}(\avg(S_1))$. Note that $W_4 \subset W_{J(\vla)}$ fixes the first
and third vectors in this equation. Hence $W_4 \subset W_5$.

It remains to show $W_5 \subset W_{J_{\max}}$. Note by Lemma
\ref{Lfacts}, one can reduce the problem to the case where $I, W$, and
$\lie{g}$ are equal to $J(\vla), W_{J(\vla)}$, and $\lie{g}_{J(\vla)}$
respectively. Now Lemma \ref{Lfinite} implies that $|S_1| \avg(S_1) \in
\Z_+ \Omega_{I \setminus J_{\max}}$ lies in the dominant Weyl chamber. It
follows by assertion (I) in \cite[Chapter V.3.3]{Bou} and Theorem $2$ in
\cite[Chapter VI.1.5]{Bou} that $W_5$ is generated by the simple
reflections it contains. Denote the indices corresponding to these simple
reflections by $J_0$; thus, $J_0 := \{ i \in I : (\avg(S_1),\alpha_i) = 0
\}$. Now note by \cite[Proposition 5.2]{KR} as well as Proposition
\ref{Pstable} that $(\avg(S_1),-)$ is maximized precisely at
$\wt_{J_{\max}} \vla$. Since $J_{\max}$ is maximal in the sense of
Theorem \ref{T5}, it follows that $J_0 \subset J_{\max}$, whence $W_5 =
W_{J_0} \subset W_{J_{\max}}$.
\end{proof}

We now use Theorem \ref{T5} as well as the above analysis in the present
section, to show another of the main results in this paper.

\begin{proof}[Proof of Theorem \ref{Tfinite}]
(1) Note that the affine hull of $\conv_\R \wt_J \vla$ equals the
$\lambda$-translate of the real span of the set $S_{J,\lambda} := \lambda
- \wt_J \vla = \lambda - \wt_{J_{\min} \cup (J \setminus J(\vla))} \vla$
by Theorem \ref{T5}. This implies that $\spa_\R (S_{J,\lambda}) \subset
\R \Delta_{J_{\min} \sqcup (J \setminus J(\vla))}$. Now note that $\Z_+
\alpha_{j_2} \in S_{J,\lambda}$ for all $j_2 \in J \setminus J(\vla)$.
Moreover, the minimality of $(J \cap J(\vla))_{\min} = J_3(\vla)$ implies
that for all $j_3 \in J_3(\vla)$, there exists a weight $\mu_{j_3} \in
S_{J,\lambda}$ such that $\mu_{j_3} = \sum_{j \in J_{\min}} c_j \alpha_j$
with $c_{j_3} > 0$. Using Lemma \ref{Lweights}, it follows that
$\alpha_{j_3} \in \spa_\R (S_{J,\lambda})$ for each $j_3 \in J_{\min}$.

Finally, suppose $j_4 \in C \subset J_4(\vla)$, where $C$ is a connected
component of the Dynkin diagram of $J \cap J(\vla)$. Then there exists
$j_2 \in J_2(\vla)$ such that $(\alpha_{j_2}, \alpha_{j_4}) \neq 0$. Now
recall from the proof of $(1) \implies (2)$ in Theorem \ref{T5} that $\wt
L_{J(\vla)}(\lambda - \alpha_{j_2}) \subset \wt \vla$ by the
$W_{J(\vla)}$-integrability of $\vla$. Since $\pi_C(\lambda -
\alpha_{j_2}) \neq 0$, it follows similar to the above reasoning for
$J_3(\vla)$ that there exists $\mu_{j_4} \in S_{J,\lambda}$ of the form
$\mu_{j_4} = \sum_{j \in J_{\min}} c_j \alpha_j$ with $c_{j_4} > 0$.
Therefore $\Delta_{J_4(\vla)} \subset S_{J,\lambda}$, and hence, $\spa_\R
(S_{J,\lambda}) = \R \Delta_{J_{\min} \sqcup (J \setminus J(\vla))}$.
Taking dimensions of both sides completes the proof of this part.\medskip

(2) This part is the meat of the proof. We first claim that the
stabilizers in $W_{J(\vla)}$ of $\wt_J \vla$ and $\conv_\R \wt_J \vla$
agree. Clearly if $w \in W_{J(\vla)}$ stabilizes $\wt_J \vla$ then it
stabilizes its convex hull. Conversely, if $w \in W_{J(\vla)}$ stabilizes
$\conv_\R \wt_J \vla$, then it stabilizes $(\wt \vla) \cap \conv_\R \wt_J
\vla = \wt_J \vla$, which shows that the two stabilizer subgroups in
$W_{J(\vla)}$ are equal.

Denote this common stabilizer subgroup in $W_{J(\vla)}$ by $W'$. We now
claim that $W' = W_{J_{\max}}$. One inclusion is clear: $W_{J_{\max}
\setminus J_{\min}}$ fixes $\wt_J \vla$; moreover, $W_{J_{\min}}$
preserves $\wt_J \vla = \wt \vla \cap \lambda - \Z_+ \Delta_J$, since
$J_{\min} \subset J$. Therefore $W_{J_{\max}}$ preserves $\wt_J \vla$. To
show the converse inclusion, suppose $w \in W'$ preserves $\wt_J \vla$.
Then $w$ preserves each $W_{J(\vla)}$-stable subset of $\wt_J \vla$. Now
recall the notation in equation \eqref{Evinberg0}; thus $w$ preserves the
sets $T^\mu \cap \wt_J \vla$ for $\mu \in \mathbb{T}(\vla)$. Therefore
one can use Lemma \ref{Lfacts} to transfer the problem to $\wt_{J(\vla)}
L_{J(\vla)}(\pi_{J(\vla)}(\lambda - \mu))$. Then $w$ preserves the set
\[ \varpi_{J(\vla)}(T^\mu_J) = \wt_{J \cap J(\vla)}
L_{J(\vla)}(\pi_{J(\vla)}(\lambda - \mu)) = \wt_{\widetilde{J}_{\max}}
L_{J(\vla)}(\eta_\mu), \]

\noindent where $\widetilde{J}$ and $\eta_\mu$ were defined in equation
\eqref{Evinberg1}. Now denote the stabilizer in $W_{J(\vla)}$ of
$\varpi_{J(\vla)}(T^\mu \cap \wt_J \vla)$ by $W''_\mu$; then
$W_{J_{\max}} \subset W' \subset W''_\mu$.
Thus it suffices to show that
\begin{equation}\label{Evinberg4}
\bigcap_{\mu \in \mathbb{T}(\vla)} W''_\mu \subset W_{J_{\max}}.
\end{equation}

\noindent Note by Proposition \ref{Pfinite} that $W''_\mu =
W_{\widetilde{J}^\mu_{\max}}$, where $\widetilde{J}^\mu_{\max}$ is the
unique maximal set $J'$ of simple roots (by Theorem \ref{T5}) such that
$\wt_{J \cap J(\vla)} L_{J(\vla)}(\eta_\mu) = \wt_{J'}
L_{J(\vla)}(\eta_\mu)$. Moreover, we claim -- akin to the proof of
Theorem \ref{T5} -- that the inclusion in \eqref{Evinberg4} holds even if
the intersection is taken over the smaller set $\mu \in \{ 0 \} \sqcup
\Delta_{J_2(\vla)}$. In other words, the proof is complete if the
following inclusion is shown to hold:
\begin{equation}\label{Evinberg3}
\widetilde{J}^0_{\max} \cap \bigcap_{j \in J_2(\vla)}
\widetilde{J}^{\alpha_j}_{\max} \subset J_{\max}.
\end{equation}

\noindent To prove this inclusion, first note that
$\widetilde{J}^0_{\max} = (J \cap J(\vla))_{\max} \supset J_{\max}$ (the
inclusion follows from equation \eqref{Emax}). We now claim that for all
$j_6 \in (J \cap J(\vla))_{\max} \setminus J_{\max}$, there exists $j \in
J_2(\vla)$ that $j_6 \notin \widetilde{J}^{\alpha_j}_{\max}$. To show
this claim, first compute using equation \eqref{Emax}:
\[
(J \cap J(\vla))_{\max} \setminus J_{\max} = J_6(\vla) \cap \{ \lambda
\}^\perp \cap J_3(\vla)^\perp \setminus (J_2(\vla)^\perp \cap
J_4(\vla)^\perp).
\]

\noindent Thus it remains to consider two cases. The first is if
$j_6 \in J_6(\vla) \cap \{ \lambda \}^\perp \cap J_3(\vla)^\perp
\setminus J_2(\vla)^\perp$. Suppose $(\alpha_{j_6}, \alpha_{j_2}) \neq 0$
for some $j_2 \in J_2(\vla)$. Then
\[
s_{j_6}(\eta_{\alpha_{j_2}}) = s_{j_6}(\pi_{J(\vla)}(\lambda -
\alpha_{j_2})) \in \wt_{\{ j_6 \}} L_{J(\vla)}(\eta_{\alpha_{j_2}})
\setminus \wt_{J \cap J(\vla)} L_{J(\vla)}(\eta_{\alpha_{j_2}}).
\]

\noindent Therefore $j_6 \notin \widetilde{J}_{\max}^{\alpha_{j_2}}$ as
desired.

The second case is when $j_6 \in J_6(\vla) \cap \{ \lambda \}^\perp \cap
J_3(\vla)^\perp \cap J_2(\vla)^\perp \setminus J_4(\vla)^\perp$. Then
there exist $j_2 \in J_2(\vla)$ and a connected component $C \subset
J_4(\vla)$ of $J \cap J(\vla)$, such that neither $\alpha_{j_6}$ nor
$\alpha_{j_2}$ is orthogonal to all of $\Delta_C$. Say $(\alpha_{j_6},
\alpha_{j_4}) \neq 0$ for $j_4 \in C$. We assert that the affine hull of
the set $S' := \wt_{J \cap J(\vla)} L_{J(\vla)}(\eta_{\alpha_{j_2}})$
contains $\lambda - \R \Delta_C$; but this holds by definition of
$J_3(\vla)$ and \cite[Proposition 5.1]{KR}, since
$\pi_C(\eta_{\alpha_{j_2}}) = \pi_C(\lambda - \alpha_{j_2}) \neq 0$. In
particular, the difference $\eta_{\alpha_{j_2}} - w_\circ^{J \cap
J(\vla)}(\eta_{\alpha_{j_2}})$ of the extremal elements in $S'$ lies in
$Q^+ \setminus Q^+_{I \setminus \{ j_4 \}}$. Therefore,
\[
s_{j_6}(w_\circ^{J \cap J(\vla)}(\eta_{\alpha_{j_2}})) \in \wt_{(J \cap
J(\vla)) \sqcup \{ j_6 \}} L_{J(\vla)}(\eta_{\alpha_{j_2}}) \setminus
\wt_{J \cap J(\vla)} L_{J(\vla)}(\eta_{\alpha_{j_2}}),
\]

\noindent which again shows that $j_6 \notin
\widetilde{J}_{\max}^{\alpha_{j_2}}$, proving the claim made after
equation \eqref{Evinberg3}. Putting together the above analysis shows
that \eqref{Evinberg3} holds, whence $W_{J_{\max}} \subset W' \subset
\bigcap_{\mu \in \mathbb{T}(\vla)} W''_\mu \subset W_{J_{\max}}$ as
desired. Finally, that $W_{J_{\max}} = W_{J_{\min}} \times W_{J_{\max}
\setminus J_{\min}}$ was proved in Proposition \ref{Pfinite}.\medskip

Next, suppose $\conv_\R \wt \vla = \conv_\R \wt M(\lambda, J(\vla))$.
Then $\conv_\R \wt_J \vla$ is a face of the convex polyhedron $\conv_\R
\wt \vla$ for all $J \subset I$. We now compute (the size of) the vertex
set of this face. Using Theorems \ref{T23} and \ref{T5}, it follows that
\[
\conv_\R \wt_J \vla = \conv_\R \wt_J M(\lambda) \cap \conv_\R \wt
M(\lambda, J(\vla)) = \conv_\R \wt_J M(\lambda, J(\vla)).
\]

\noindent Thus, applying Theorem \ref{T23} for the $\lie{g}_J$-submodule
$\vla_J := U(\lie{g}_J) v_\lambda \subset M(\lambda, J(\vla))$ shows that
the face $\conv_\R \wt_J \vla$ has vertex set $W_{J \cap
J(\vla)}(\lambda)$. Now use Lemma \ref{Lfacts} to reduce the problem to
studying the vertex set inside the convex hull of weights of the
finite-dimensional $\lie{g}_{J(\vla)}$-module
$L_{J(\vla)}(\pi_{J(\vla)}(\lambda))$.
By Proposition \ref{P11}, the vertex set has size $|W_{J \cap
J(\vla)}(\lambda)| = [W_{J \cap J(\vla)} : W'']$, where $W''$ is the
stabilizer subgroup in $W_{J \cap J(\vla)}$ of $\pi_{J(\vla)}(\lambda)$.
Since $\pi_{J(\vla)}(\lambda) \in P^+$, it follows by assertion (I) in
\cite[Chapter V.3.3]{Bou} and Theorem $2$ in \cite[Chapter VI.1.5]{Bou}
that $W''$ is generated by the simple reflections $s_j$ in it. Now $s_j$
fixes $\pi_{J(\vla)}(\lambda)$ for $j \in J \cap J(\vla)$, if and only if
$j \in J \cap J(\vla) \cap \{ \pi_{J(\vla)}(\lambda) \}^\perp$.
It follows that $W'' = W_{J \cap J(\vla) \cap \{ \pi_{J(\vla)}(\lambda)
\}^\perp}$, and the proof is complete upon noting that $J(\vla) \cap \{
\pi_{J(\vla)}(\lambda) \}^\perp = J(\vla) \cap \{ \lambda \}^\perp$.

Finally, to compute the $f$-polynomial of the convex polyhedron $\conv_\R
\wt \vla$, apply Theorem \ref{T23} for $\vla = M(\lambda, J(\vla))$ to
obtain that every face of the convex polyhedron $\conv_\R \wt \vla$ is
$W_{J(\vla)}$-conjugate to a unique face of the form $\conv_\R
\wt_{J_{\max}} \vla$ (or $\conv_\R \wt_{J_{\min}} \vla$). The result now
follows from the first two parts of this theorem.
\end{proof}

\begin{remark}
Note by the analysis in the penultimate paragraph of the proof of Theorem
\ref{Tfinite} that $[W_J : W_{J \cap \{ \pi_{J(\vla)}(\lambda)
\}^\perp}]$ does not depend on the choice of the set $J$ from among
$[J_{\min}, J_{\max}]$. This can also be seen directly as follows: recall
that $W_{J_{\max} \setminus J_{\min}}$ fixes $\lambda$ and commutes with
$W_{J_{\min}}$, so that $J \cap J(\vla) \setminus J_{\min} \subset \{
\lambda \}^\perp$. Therefore,
\begin{align*}
[W_{J \cap J(\vla)} : &\ W_{J \cap J(\vla) \cap \{ \lambda \}^\perp}]\\
= &\ [W_{J_{\min} \sqcup (J \cap J(\vla) \setminus J_{\min})} :
W_{(J_{\min} \cap \{ \lambda \}^\perp) \sqcup (J \cap J(\vla) \setminus
J_{\min})}]\\
= &\ [W_{J_{\min}} \times W_{J \cap J(\vla) \setminus J_{\min}} :
W_{J_{\min} \cap \{ \lambda \}^\perp} \times W_{J \cap J(\vla) \setminus
J_{\min}}]\\
= &\ [W_{J_{\min}} : W_{J_{\min} \cap \{ \lambda \}^\perp}],
\end{align*}

\noindent and this is indeed independent of $J \in [J_{\min}, J_{\max}]$.
We also remark that Theorem \ref{Tfinite} yields multiple formulas for
the number of vertices of $\conv_\R \wt \vla$, and these formulas are
easily seen to agree in light of the preceding computation.
\end{remark}

We end this section by discussing how a result of Satake, Borel--Tits,
Vinberg, and Casselman for Weyl polytopes with generic dominant integral
highest weight $\lambda \in P^+$ follows from Theorems \ref{T5} and
\ref{Tfinite}. These authors showed that the distinct sets among $\{
\wt_J L(\lambda) : J \subset I \}$ are in bijection with
``$\lambda$-admissible'' subsets $J \subset I$, when $\lambda \in P^+$ is
``admissible''. In the notation of the present paper, $\vla =
M(\lambda,I) = L(\lambda)$ and $J(L(\lambda)) = I$. Moreover, it is not
hard to verify that $\lambda$ being admissible simply means that
$I_{\min} = I$, while $J \subset I$ being $\lambda$-admissible means that
$J = J_{\min} = J_3(L(\lambda))$. We now write down the aforementioned
result, as stated by Vinberg.

\begin{theorem}[{Vinberg, \cite[Proposition 3.2]{Vi}}]\label{Tvinberg}
Fix $\lambda \in P^+$ and $\vla = L(\lambda)$. Suppose $I_{\min} = I$.
Then every face of $\calp(\lambda) = \conv_\R \wt L(\lambda)$ is
$W$-conjugate to $\conv_\R \wt_J L(\lambda)$ for a unique subset $J
\subset I$ such that $J = J_3(L(\lambda)) = J_{\min}$. Moreover, this
face has dimension $|J_{\min}|$.
\end{theorem}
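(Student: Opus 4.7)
The plan is to derive Theorem \ref{Tvinberg} as a direct specialization of the more general results of this paper, namely Theorems \ref{T23}, \ref{T5}, \ref{Tfinite}, together with Proposition \ref{Pvinberg}. Since $\vla = L(\lambda)$ is simple and $\lambda \in P^+$, Theorem \ref{T1} gives $J(L(\lambda)) = I$, so $W_{J(L(\lambda))} = W$. Theorem \ref{T23} (case (b), simple modules) then applies and asserts that $\calp(\lambda) = \conv_\R \wt L(\lambda)$ is a $W$-invariant convex polytope whose faces are precisely the sets of the form $\conv_\R w(\wt_J L(\lambda))$ for some $w \in W$ and $J \subset I$. This immediately delivers the existence half of the statement: every face is $W$-conjugate to a standard parabolic face.

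For uniqueness, I would invoke Proposition \ref{Pvinberg}, which reads off Theorem \ref{T5}. It says that $\conv_\R w(\wt_J L(\lambda))$ and $\conv_\R w'(\wt_{J'} L(\lambda))$ coincide iff $J \setminus J(\vla) = J' \setminus J(\vla)$, $J_{\min} \subset J' \cap J(\vla) \subset J_{\max}$, and $w^{-1}w' \in W_{J_{\max}}$. Because $J(L(\lambda)) = I$, the first condition is automatic, and the fibers of $J \mapsto \left[W\text{-orbit of }\conv_\R \wt_J L(\lambda)\right]$ in $2^I$ are exactly the intervals $[J_{\min}, J_{\max}]$. Picking the minimum element of each such interval therefore gives a canonical representative $J$, characterized by $J = J_{\min}$, which is the uniqueness claim.

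Next I would simplify the formula for $J_{\min}$ in this finite-dimensional setting. Since $J(\vla) = I$, the set $J_2(\vla) = J \setminus J(\vla)$ is empty, and then $J_4(\vla)$ is vacuously empty too (its defining condition $\Delta_C \not\perp \Delta_{J_2(\vla)}$ cannot hold). Hence $J_{\min} = J_3(\vla) \sqcup J_4(\vla) = J_3(\vla) = (J \cap J(\vla))_{\min}$, the union of the connected components of the Dynkin diagram of $J$ that meet $\supp(\lambda)$. This is exactly the equality $J = J_3(L(\lambda)) = J_{\min}$ appearing in the statement. The dimension formula $\dim \conv_\R \wt_J L(\lambda) = |J_{\min}|$ then follows at once from Theorem \ref{Tfinite}(1), since $|J \setminus J(\vla)| = 0$.

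There is no genuine obstacle; the only point requiring care is the role of the hypothesis $I_{\min} = I$. Unwinding Definition \ref{Dminmax}, this is equivalent to every connected component of the Dynkin diagram of $\lie{g}$ meeting $\supp(\lambda)$, which is Vinberg's ``admissibility'' condition on $\lambda$. Its sole function in the proof is to guarantee that the top-dimensional face $\calp(\lambda)$ itself is represented in the canonical form by $J = I$ (so that $|J_{\min}| = |I|$ recovers $\dim \calp(\lambda)$); without it one would consistently have to replace $I$ by $I_{\min}$ throughout the statement. Thus the entire proof is essentially bookkeeping on top of Theorems \ref{T5} and \ref{Tfinite}.
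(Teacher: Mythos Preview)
Your proposal is correct and is essentially the same approach as the paper, which simply remarks that Theorem \ref{Tvinberg} follows from Theorems \ref{T5} and \ref{Tfinite} (and that the hypothesis $I_{\min} = I$ is not actually needed). Your derivation via Proposition \ref{Pvinberg} and the observation that $J_2(\vla) = J_4(\vla) = \emptyset$ when $J(\vla) = I$ spells out precisely the bookkeeping the paper leaves implicit.
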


Note that Theorem \ref{Tvinberg} follows from our main Theorems \ref{T5}
and \ref{Tfinite}. In fact these two Theorems show that the assumption
$I_{\min} = I$ is not required to prove Theorem \ref{Tvinberg}. (This
assumption was also used in \cite{Vi} to ensure that the Weyl polytope is
of ``full dimension'' $|I_{\min}| = |I|$.) As discussed in Remark
\ref{Rminmax}, the analysis in Theorems \ref{T5} and \ref{Tfinite} for
general modules $\vla$ is more involved because one has to account for
the simple roots in $J_2(\vla)$ and hence in $J_4(\vla)$.

\section{Half-space representation and facets}

We now study standard parabolic subsets of weights in $\wt \vla$, whose
convex or affine hull has codimension one in $\conv_R \wt \vla$. The goal
in this section is to prove Theorem \ref{Thalfspace}, using the fact that
every convex polyhedron is the intersection of a minimal family of
codimension one facets. In order to do so, it is natural to seek
characterizations of when a particular face $\conv_\R \wt_{I \setminus \{
i \}} \vla$ is a (codimension-one) facet of $\conv_\R \wt \vla$. The
following result provides several different such characterizations, for
all highest weight modules $\vla$.

\begin{prop}\label{Pfacet}
Suppose $\lambda \in \lie{h}^*$ and $M(\lambda) \twoheadrightarrow \vla$.
Given $i \in I$, define the ``coordinate face'' $F_i(\vla) := \conv_\R
\wt_{I \setminus \{ i \}} \vla$. Then the following are equivalent for $i
\in I$:
\begin{enumerate}
\item $F_i(\vla)$ has codimension one in the real affine space $\lambda -
\R \Delta_{I_{\min} \sqcup (I \setminus J(\vla))}$.

\item $i \notin J(\vla) \setminus I_{\min} = I_{\max} \setminus
I_{\min}$, and $F_i(\vla)$ is maximal among the coordinate faces $\{
F_{i'}(\vla) : i' \in I_{\min} \sqcup (I \setminus J(\vla)) \}$.

\item $i \notin J(\vla) \setminus I_{\min}$, and for all $i' \in I_{\min}
\setminus \{ i \}$, there exists $\mu \in \{ 0 \} \sqcup \Delta_{I
\setminus J(\vla)}$ such that the minimal weight $\mu_i$ of
$\wt_{I_{\min} \setminus \{ i \}} L_{J(\vla)}(\pi_{J(\vla)}(\lambda -
\mu))$ satisfies: $(\mu_i, \omega_{i'}) \neq (\lambda, \omega_{i'})$.

\item $(I \setminus \{ i \})_{\min} = I_{\min} \setminus \{ i \}$
and $(I \setminus \{ i \})_{\max} = I_{\max} \setminus \{ i \}$.

\item $i \notin J(\vla) \setminus I_{\min}$, and for all $i' \in I_{\min}
\setminus \{ i \}$, there exists $\mu \in \wt_{I \setminus \{ i \}} \vla$
such that $(\mu, \omega_{i'}) \neq (\lambda, \omega_{i'})$.

\item $i \notin J(\vla) \setminus I_{\min}$, and for all $i' \in I_{\min}
\setminus \{ i \}$, the set $\wt_{I \setminus \{ i \}} \vla$ contains a
nontrivial $\alpha_{i'}$-string.
\end{enumerate}
\end{prop}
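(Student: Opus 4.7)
The plan is to prove that all six conditions are equivalent by reducing each of them to the purely structural statement (4), which is the cleanest formulation. The principal tools are the dimension formula in Theorem \ref{Tfinite}(1), the explicit descriptions of $J_{\min}$ and $J_{\max}$ from Theorem \ref{T5}, and the inclusion/equality dictionary from Proposition \ref{Pvinberg}.

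I would begin with (1) $\Leftrightarrow$ (4). Applying Theorem \ref{Tfinite}(1) to $J = I \setminus \{ i \}$ expresses $\dim F_i(\vla) = |(I \setminus \{ i \})_{\min}| + |(I \setminus \{ i \}) \setminus J(\vla)|$, with ambient dimension $|I_{\min}| + |I \setminus J(\vla)|$. A case split on whether $i \in J(\vla)$ then yields codimension one precisely when $(I \setminus \{ i \})_{\min} = I_{\min} \setminus \{ i \}$ and, by a matching computation with \eqref{Emax}, also $(I \setminus \{ i \})_{\max} = I_{\max} \setminus \{ i \}$. The only delicate point is that if $i \in J(\vla) \setminus I_{\min}$, then by \eqref{Emin}, $i$ sits in a connected component $C$ of $J(\vla)$ with $\pi_C(\lambda) = 0$ and $\Delta_C \perp \Delta_{I \setminus J(\vla)}$; removing $i$ merely subdivides $C$ into subcomponents which inherit both failure conditions, whence $(I \setminus \{ i \})_{\min} = I_{\min}$ and $F_i(\vla)$ has codimension zero. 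This rules out (1), and forces the common hypothesis $i \notin J(\vla) \setminus I_{\min}$ present in (2), (3), (5), (6).

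Next I would dispatch (4) $\Leftrightarrow$ (2) via Proposition \ref{Pvinberg}: inclusions $F_i(\vla) \subset F_{i'}(\vla)$ with $i' \in I_{\min} \sqcup (I \setminus J(\vla))$ are controlled by containment of the corresponding extremal sets $(I \setminus \{ i \})_{\min} \subset (I \setminus \{ i' \})_{\min}$, and maximality of $F_i(\vla)$ among these coordinate faces translates directly into the maximality assertion on $(I \setminus \{ i \})_{\max}$. For the remaining equivalences, each of (5), (6), and (3) is precisely a witness for the assertion \emph{$i' \in (I \setminus \{ i \})_{\min}$ for all $i' \in I_{\min} \setminus \{ i \}$}, packaged differently. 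Condition (5) is, by Theorem \ref{Tfinite}(1), the assertion that $\omega_{i'}$ is not orthogonal to the affine hull of $\wt_{I \setminus \{ i \}} \vla$; since $i' \in I_{\min} \subset J(\vla)$, this is exactly $i' \in (I \setminus \{ i \})_{\min}$. Condition (6) is equivalent to (5) via Lemma \ref{Lweights}, which shows that any descending path from $\lambda$ to a weight not orthogonal to $\omega_{i'}$ must contain a nontrivial $\alpha_{i'}$-step. Finally, (3) is the refinement of this analysis to the integrable top via Lemma \ref{Lfacts}: by \eqref{Emin}, a component $C$ of $(I \setminus \{ i \}) \cap J(\vla)$ containing $i'$ lies in $(I \setminus \{ i \})_{\min}$ iff some $\mu \in \{ 0 \} \sqcup \Delta_{I \setminus J(\vla)}$ satisfies $\pi_C(\lambda - \mu) \neq 0$, which by \cite[Proposition 5.1]{KR} is exactly the stated condition that the minimal weight $\mu_i$ of $\wt_{I_{\min} \setminus \{ i \}} L_{J(\vla)}(\pi_{J(\vla)}(\lambda - \mu))$ fails the relation $(\mu_i, \omega_{i'}) = (\lambda, \omega_{i'})$.

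The principal obstacle is the careful bookkeeping in the case $i \in I_{\min} \cap J(\vla)$, where deleting $i$ can disconnect a component of $J(\vla) \cap I_{\min}$ into several subcomponents; each must be separately classified as $J_3$-, $J_4$-, or $J_5$-type for the subset $I \setminus \{ i \}$. In particular, a subcomponent that previously lay in $I_{\min}$ via $J_4$-type non-orthogonality with $\Delta_{I \setminus J(\vla)}$ may drop out of $(I \setminus \{ i \})_{\min}$ if the only simple root in $I \setminus J(\vla)$ witnessing that non-orthogonality was separated from it by $i$. It is precisely this Dynkin-diagrammatic subtlety that condition (3), with its quantifier over $\mu \in \{ 0 \} \sqcup \Delta_{I \setminus J(\vla)}$, is engineered to detect.
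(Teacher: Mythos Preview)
Your approach is correct and uses essentially the same ingredients as the paper, but the logical organization differs. The paper proves the cycle $(1) \Rightarrow (2) \Rightarrow (6) \Rightarrow (5) \Rightarrow (4) \Rightarrow (1)$ and then closes the loop through $(4) \Rightarrow (3) \Rightarrow (1)$; you instead place (4) at the center and argue each of (1), (2), (3), (5), (6) is equivalent to it directly. Your $(1) \Leftrightarrow (4)$ is the paper's $(4) \Rightarrow (1)$ together with a direct converse using the containment $(I \setminus \{i\})_{\min} \subset I_{\min} \setminus \{i\}$, which the paper instead obtains at the end of its $(5) \Rightarrow (4)$ step. Your treatment of (5) and (6) via the affine-hull description and Lemma~\ref{Lweights} matches the paper's $(6) \Rightarrow (5) \Rightarrow (4)$ content, just reorganized.

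One phrasing is misleading: in your $(2) \Leftrightarrow (4)$ sketch you say that maximality of $F_i(\vla)$ ``translates directly into the maximality assertion on $(I \setminus \{i\})_{\max}$.'' In fact the $J_{\max}$ half of (4) is, by Proposition~\ref{Phalfspace}, exactly the hypothesis $i \notin J(\vla) \setminus I_{\min}$ already present in (2); the \emph{maximality} of $F_i(\vla)$ among coordinate faces corresponds to the $J_{\min}$ half of (4). Concretely, if $(I \setminus \{i\})_{\min} \subsetneq I_{\min} \setminus \{i\}$, pick $i'$ in the difference; then $\alpha_{i'}$ does not occur in $\wt_{I \setminus \{i\}} \vla$, so $\wt_{I \setminus \{i\}} \vla = \wt_{I \setminus \{i,i'\}} \vla \subsetneq \wt_{I \setminus \{i'\}} \vla$ by Proposition~\ref{Phalfspace}. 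This is precisely the paper's $(2) \Rightarrow (6)$ argument, and you should make that step explicit rather than invoking Proposition~\ref{Pvinberg} alone.
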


We observe (via the dictionary in Section \ref{Scellini}) that
\cite[Theorem 4.5]{CM} is a special case of Proposition \ref{Pfacet},
where $\lie{g}$ is simple and $\vla$ is the adjoint representation. More
precisely, each part of Proposition \ref{Pfacet} corresponds to the same
numbered part in \textit{loc.~cit.}, with the exception of the second
part above, which corresponds to part (7) of \textit{loc.~cit.} (For part
(3), recall the second equality in equation \eqref{Emin}.) We will
address the missing part \cite[Theorem 4.5(2)]{CM} in Corollary
\ref{Ccellini} below.

Note by Theorem \ref{T5} that $I_{\max} = J(\vla)$ by maximality.
Therefore Proposition \ref{Pfacet}(4) reads:
\begin{equation}\label{Eminmax2}
(I \setminus \{ i \})_{\min} = I_{\min} \setminus \{ i \}, \qquad
J(\vla) \setminus \{ i \} = (I \setminus \{ i \})_{\max}.
\end{equation}

\noindent The second equation in \eqref{Eminmax2} is \textit{a priori}
different from what appears in four of the six assertions in Proposition
\ref{Pfacet} -- namely, the condition $i \notin J(\vla) \setminus
I_{\min}$. We now show a preliminary result which is required to prove
Proposition \ref{Pfacet}, and which shows that the aforementioned two
conditions on $i$ are equivalent for all highest weight modules $\vla$.

\begin{prop}\label{Phalfspace}
Suppose $\lambda \in \lie{h}^*$ and $M(\lambda) \twoheadrightarrow \vla$.
Then for all $i \in I$,
\[ (I \setminus \{ i \})_{\max} = \begin{cases}
J(\vla) \setminus \{ i \} & \qquad \text{if } i \in I_{\min} \sqcup (I
\setminus J(\vla)),\\
J(\vla) & \qquad \text{otherwise, i.e., if } i \in J(\vla) \setminus
I_{\min} = I_{\max} \setminus I_{\min}.
\end{cases} \]

\noindent Moreover, the ``coordinate faces'' $\{ \wt_{I \setminus \{ i
\}} \vla : i \in I_{\min} \sqcup (I \setminus J(\vla)) \}$
are distinct and proper subsets of $\wt \vla$, which equals $\wt_{I
\setminus \{ i \}} \vla$ for all $i \in J(\vla) \setminus I_{\min}$.
\end{prop}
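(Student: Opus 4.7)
The plan is to apply Theorem \ref{T5} with $J := I \setminus \{i\}$ and unpack the formula \eqref{Emax}. For this $J$, the ``extra'' bit $J_6(\vla) \cap \{\lambda\}^\perp \cap (I \setminus \{i\})_{\min}^\perp \cap J_2(\vla)^\perp$ that adjoins to $(I \setminus \{i\}) \cap J(\vla)$ is contained in $\{i\}$; and $J_6(\vla) = \emptyset$ unless $i \in J(\vla)$, in which case $J_6(\vla) = \{i\}$ and $J_2(\vla) = I \setminus J(\vla)$. So for $i \in I \setminus J(\vla)$, $(I \setminus \{i\})_{\max} = J(\vla) = J(\vla) \setminus \{i\}$ at once. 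For $i \in J(\vla)$, the first assertion reduces to deciding whether $i$ lies in $\{\lambda\}^\perp \cap (I \setminus \{i\})_{\min}^\perp \cap (I \setminus J(\vla))^\perp$; the goal is to verify this occurs exactly when $i \in J(\vla) \setminus I_{\min}$.

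If $i \in J(\vla) \setminus I_{\min}$, Definition \ref{Dminmax} (applied with $J = I$) shows that the Dynkin component $C$ of $J(\vla)$ containing $i$ satisfies $\pi_C(\lambda) = 0$ and $\Delta_C \perp \Delta_{I \setminus J(\vla)}$, which immediately gives $i \in \{\lambda\}^\perp \cap (I \setminus J(\vla))^\perp$. Moreover, every Dynkin neighbor of $i$ lies in $J(\vla)$ (else it would lie in $I \setminus J(\vla)$, contradicting $\alpha_i \perp \Delta_{I \setminus J(\vla)}$), hence in $C$; and any connected component $C'$ of $J(\vla) \setminus \{i\}$ inside $C$ inherits both vanishings from $C$, so $C' \not\subset (I \setminus \{i\})_{\min}$. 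Consequently no Dynkin neighbor of $i$ belongs to $(I \setminus \{i\})_{\min}$, giving $i \in (I \setminus \{i\})_{\min}^\perp$.

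The technical heart of the argument is the remaining case $i \in I_{\min}$, where one must show one of the three orthogonalities \emph{fails}. If $(\lambda, \alpha_i) \neq 0$ or $\alpha_i \not\perp \Delta_{I \setminus J(\vla)}$ there is nothing to prove, so assume both vanish. Since $i \in I_{\min} = I_3 \sqcup I_4$, the Dynkin component $C$ of $J(\vla)$ containing $i$ satisfies either $\pi_C(\lambda) \neq 0$ or $\Delta_C \not\perp \Delta_{I \setminus J(\vla)}$; the standing vanishings at $i$ force the witness $j^*$ of this nontriviality to lie in $C \setminus \{i\}$. The main obstacle, and the key combinatorial step, is a path argument: since $C$ is connected, pick a simple Dynkin path in $C$ from $j^*$ to $i$, and let $j_1$ be its penultimate vertex. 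Then $j_1$ is adjacent to $i$, and the initial segment $j^*, \dots, j_1$ of the path avoids $i$, so $j_1$ and $j^*$ belong to the same connected component $C_l$ of $J(\vla) \setminus \{i\}$. Hence $C_l$ inherits the nontriviality witnessed by $j^*$, placing it in $(I \setminus \{i\})_{\min}$ by Theorem \ref{T5}. This produces a Dynkin neighbor $j_1$ of $i$ inside $(I \setminus \{i\})_{\min}$, violating the third orthogonality as required.

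Finally, for the second assertion, the equivalence (2) $\Longleftrightarrow$ (5) of Theorem \ref{T5}, applied with $J = I$ and $J' = I \setminus \{i\}$, shows that $\wt \vla = \wt_{I \setminus \{i\}} \vla$ precisely when $i \in J(\vla) \setminus I_{\min}$; this handles both the equality for such $i$ and the proper containment $\wt_{I \setminus \{i\}} \vla \subsetneq \wt \vla$ for $i \in I_{\min} \sqcup (I \setminus J(\vla))$. For distinctness among distinct $i, i' \in I_{\min} \sqcup (I \setminus J(\vla))$, the same equivalence with $J = I \setminus \{i\}$ and $J' = I \setminus \{i'\}$ forces either a failure of $(I \setminus \{i\}) \setminus J(\vla) = (I \setminus \{i'\}) \setminus J(\vla)$ (when at least one of $i, i'$ lies outside $J(\vla)$), or, when both lie in $I_{\min}$, a failure of $J(\vla) \setminus \{i'\} \subset (I \setminus \{i\})_{\max} = J(\vla) \setminus \{i\}$ at the element $i$.
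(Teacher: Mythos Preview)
Your proof is correct, but it takes a more computational route than the paper's. The paper observes that by \eqref{Emax} one always has the sandwich $J(\vla)\setminus\{i\}\subset (I\setminus\{i\})_{\max}\subset J(\vla)$, so the only question is which of the two values occurs; it then decides this by checking whether $\wt_{I\setminus\{i\}}\vla=\wt\vla$, which is immediate from the equivalence $(2)\Leftrightarrow(5)$ of Theorem~\ref{T5} applied with $J=I$ and $J'=I\setminus\{i\}$ (the containment $I_{\min}\subset J(\vla)\setminus\{i\}$ fails exactly when $i\in I_{\min}$, and the condition $I\setminus J(\vla)=(I\setminus\{i\})\setminus J(\vla)$ fails exactly when $i\notin J(\vla)$). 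For the case $i\in J(\vla)\setminus I_{\min}$ the paper simply notes that $I_{\min}\sqcup(I\setminus J(\vla))\subset I\setminus\{i\}\subset I$ together with \eqref{Eformula} forces $\wt_{I\setminus\{i\}}\vla=\wt\vla$. The distinctness is then argued by contradiction: if two coordinate faces coincide, their common $J_{\max}$ contains both $J(\vla)\setminus\{i_1\}$ and $J(\vla)\setminus\{i_2\}$, hence equals $J(\vla)$, contradicting the first part.

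By contrast, you unpack \eqref{Emax} directly and analyze the Dynkin combinatorics of the term $\{i\}\cap\{\lambda\}^\perp\cap(I\setminus\{i\})_{\min}^\perp\cap(I\setminus J(\vla))^\perp$, including a path argument inside the component $C$ of $J(\vla)$ to exhibit a neighbor of $i$ lying in $(I\setminus\{i\})_{\min}$. This is longer but has the virtue of making the role of the Dynkin graph structure explicit, and it yields the same conclusions without appealing to the sandwich trick. Either approach is fine; the paper's is shorter because it leverages Theorem~\ref{T5} as a black box rather than re-deriving pieces of its proof.
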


Note that the second assertion extends \cite[Proposition 4.1]{CM} from
the adjoint representation (for simple $\lie{g}$) to all highest weight
modules.

\begin{proof}
If $i \in I \setminus J(\vla)$, then $(I \setminus \{ i \})_{\max} =
J(\vla) = J(\vla) \setminus \{ i \}$ by Theorem \ref{T5}. If $i \in
I_{\min}$, then it follows from Theorem \ref{T5} that $\wt_I \vla = \wt
\vla \neq \wt_{I \setminus \{ i \}} \vla$, by taking the intersection
with $\wt_{J(\vla)} \vla$. Therefore $(I \setminus \{ i \})_{\max} \neq
J(\vla)$ as desired. Now suppose $i \in J(\vla) \setminus I_{\min} =
I_{\max} \setminus I_{\min}$. Then $\wt_I \vla = \wt_{I_{\min} \sqcup (I
\setminus J(\vla))} \vla$, so the intermediate set $\wt_{I \setminus \{ i
\}} \vla$ also equals $\wt \vla$. It follows that $(I \setminus \{ i
\})_{\max} = J(\vla)$, proving the formula.

Next, suppose $\wt_{I \setminus \{ i_1 \}} \vla = \wt_{I \setminus \{ i_2
\}} \vla = S'$, say, where $i_1 \neq i_2$. By Theorem \ref{T5}, $i_1, i_2
\in J(\vla)$ and there is a maximal set $J_{\max} \subset I$ such that
$S' = \wt_{J_{\max} \sqcup (I \setminus J(\vla))} \vla$. In particular,
$J_{\max} \supset J(\vla) \setminus \{ i_l \}$ for $l=1,2$, whence
$J_{\max} = J(\vla)$. Thus $\wt_{I \setminus \{ i_l \}} \vla =
\wt_{J_{\max} \sqcup (I \setminus J(\vla))} \vla = \wt \vla$, so that $(I
\setminus \{ i_l \})_{\max} = J(\vla)$. The second assertion now follows
from the preceding paragraph.
\end{proof}

We can now prove the above characterization of facets in highest weight
modules.

\begin{proof}[Proof of Proposition \ref{Pfacet}]
We first show that $(1) \implies (2) \implies (6) \implies (5) \implies
(4) \implies (1)$.
Recall from Theorem \ref{Tfinite}(1) that the affine hull of $\wt \vla$
is $\lambda - \R \Delta_{I_{\min} \sqcup (I \setminus J(\vla))}$. Now if
(1) holds, then note by Proposition \ref{Phalfspace} that $i \notin
J(\vla) \setminus I_{\min}$, and also that if $i' \in I_{\min} \sqcup (I
\setminus J(\vla))$ then $F_{i'}(\vla) \subsetneq \wt \vla$. Thus if
$F_i(\vla) \subset F_{i'}(\vla)$ then they are equal.
Intersecting with $\wt \vla$ yields: $\wt_{I \setminus \{ i \}} \vla =
\wt_{I \setminus \{ i' \}} \vla$, which contradicts Proposition
\ref{Phalfspace} if $i' \neq i$. Thus $(1) \implies (2)$.
Now assume that (6) fails; then $\wt_{I \setminus \{ i \}} \vla$ does not
contain a nontrivial $\alpha_{i'}$-string or an $\alpha_i$-string.
Therefore
\[ \wt_{I \setminus \{ i \}} \vla = \wt_{I \setminus \{ i, i' \}} \vla
\subset \wt_{I \setminus \{ i' \}} \vla \]

\noindent by Lemma \ref{Lweights}, and the inclusion is strict by
Proposition \ref{Phalfspace}.
This contradicts (2), whence $(2) \implies (6)$. Clearly $(6) \implies
(5)$ since $\wt \vla \subset \lambda - \Z_+ \Delta$. We show next that
$(5) \implies (4)$. First observe from (5) that every $i' \in I_{\min}
\setminus \{ i \}$ is continued in $(I \setminus \{ i \})_{\min}$. It
remains to show the reverse inclusion that $(I \setminus \{ i \})_{\min}
\subset I_{\min} \setminus \{ i \}$. To show this, note by (5) that there
are two cases: first if $i \in I \setminus J(\vla)$, then $(I \setminus
\{ i \})_{\min} \subset I_{\min} = I_{\min} \setminus \{ i \}$, as
desired.
The other case is if $i \in I_{\min}$. Now compute using Theorem
\ref{T5}:
\begin{align*}
&\ \wt_{(I \setminus \{ i \})_{\min} \sqcup (I \setminus (\{ i \} \cup
J(\vla)))} \vla = \wt_{I \setminus \{ i \}} \vla = \wt_{I \setminus \{ i
\}} \vla \cap \wt \vla\\
= &\ \wt_{(J(\vla) \setminus \{ i \}) \sqcup (I \setminus (\{ i \} \cup
J(\vla)))} \vla \cap \wt_{I_{\min} \sqcup (I \setminus J(\vla))} \vla =
\wt_{(I_{\min} \setminus \{ i \}) \sqcup (I \setminus J(\vla))} \vla.
\end{align*}

\noindent Again using Theorem \ref{T5}, it follows that $(I \setminus \{
i \})_{\min} \subset I_{\min} \setminus \{ i \}$. This shows that $(5)
\implies (4)$. Now if (4) holds, then compute using Theorem
\ref{Tfinite}(1):
\begin{align*}
\dim F_i(\vla) = &\ |(I \setminus \{ i \})_{\min} \sqcup ((I \setminus \{ i
\}) \setminus J(\vla))|\\
= &\ |(J(\vla) \setminus \{ i \}) \sqcup ((I \setminus \{ i \}) \setminus
J(\vla))| = |I \setminus \{ i \}|,
\end{align*}

\noindent where the last equality follows from (4). Thus $(4) \implies
(1)$.

It remains to show that (3) is equivalent to the other assertions. Via
Lemma \ref{Lfacts} we will identify $\wt
L_{J(\vla)}(\pi_{J(\vla)}(\lambda - \mu))$ with $\wt L_{J(\vla)}(\lambda
- \mu)$. We now show that $(4) \implies (3) \implies (1)$. First recall
from above that $(I \setminus \{ i \})_{\max} = I_{\max} \setminus \{ i
\}$ is equivalent to $i \notin J(\vla) \setminus I_{\min}$. Now suppose
(4) holds, i.e., $(I \setminus \{ i \})_{\min} = I_{\min} \setminus \{ i
\}$. If $i' \in I_{\min} \setminus i$, then there exists a connected
component $C$ of the Dynkin diagram of $I_{\min} \setminus \{ i \}$ such
that $i' \in C$. By equation \eqref{Emin}, if $\pi_C(\lambda) = 0 \neq
(\Delta_C, \Delta_{I \setminus J(\vla)})$, then set $\mu := \alpha_{i_2}$
for $i_2 \in I \setminus J(\vla)$ such that $(\Delta_C, \alpha_{i_2})
\neq 0$; while if $\pi_C(\lambda) = 0$ then choose $\mu := 0$. It follows
that $i' \in C \subset K_3(\lambda - \mu)$, where $K = I_{\min} \setminus
\{ i \}$. Now apply the minimality of $I_{\min} \setminus \{ i \}$ by
(4), as well as \cite[Proposition 5.1]{KR}, to the finite-dimensional
module $L_{J(\vla)}(\lambda - \mu)$. Thus, there is at least one weight
$\mu'_i$ such that $\lambda - \mu'$ is a sum of simple roots with at
least one simple root equal to $\alpha_{i'}$. In particular, for each $i'
\in I_{\min} \setminus \{ i \}$ it follows that $(\mu_i, \omega_{i'})
\neq (\lambda, \omega_{i'})$, proving (3).

Finally, suppose (3) holds. Note that the affine hull of $\conv_\R \wt_{I
\setminus \{ i \}} \vla$ contains $\Delta_{I \setminus (\{ i \} \cup
J(\vla))}$ by definition of $J(\vla)$. Next, given $i' \in I_{\min}
\setminus \{ i \}$, it follows, using the notation of (3), that
$\wt_{I_{\min} \setminus \{ i \}} L_{J(\vla)}(\lambda - \mu) \subset \wt
\vla$. It follows by (3) that $\Delta_{I_{\min} \setminus \{ i \}}$ is
also contained in the affine hull of $\conv_\R \wt_{I \setminus \{ i \}}
\vla$. Therefore $(3) \implies (1)$ and the proof is complete.
\end{proof}

Finally, we use the analysis in this and previous sections to prove our
last main result.

\begin{proof}[Proof of Theorem \ref{Thalfspace}]
Since $\wt \vla \subset \lambda - \Z_+ \Delta$ and since $\wt \vla$ is
$W_{J(\vla)}$-stable by Theorem \ref{T1}, hence $\conv_\R \wt \vla
\subset \bigcap_{i \in I, w \in W_{J(\vla)}} H_{i,w}$. Also note that
$W_{(I \setminus \{ i \})_{\max}}$ preserves the half-space $H_{i,1}$ as
well as its boundary, since $\wt_{I \setminus \{ i \}} \vla$ is contained
in the boundary. It follows that the above intersection remains unchanged
even if it runs only over $\{ W^i : i \in I \}$.

Now since it is also known that $\conv_\R \wt \vla$ is a convex
polyhedron, it equals its minimal half-space representation, i.e., the
intersection of the half-spaces $H_{i,w}$ corresponding to the
codimension-one facets of $\conv_\R \wt \vla$. Note here that the
codimension of the faces of $\conv_\R \wt \vla$ is computed inside its
affine hull, which is $\lambda - \R \Delta_{I_{\min} \sqcup (I \setminus
J(\vla))}$ by Theorem \ref{Tfinite}(1).
By Theorem \ref{T23} for $\vla = M(\lambda,J(\vla))$, the codimension-one
faces of $\conv_\R \wt \vla$ correspond to $i \in I$ (and any $w$) such
that the supporting hyperplane of $H_{i,1}$, which is the affine hull of
$\conv_\R \wt_{I \setminus \{ i \}} \vla$, has codimension one. By
Proposition \ref{Pfacet}, this condition is equivalent to: $i \in
I_{\min} \sqcup (I \setminus J(\vla))$, and $(I \setminus \{ i \})_{\min}
= I_{\min} \setminus \{ i \}$.
Thus the second part of the assertion is proved, and hence the first part
as well.
\end{proof}

\section{Minimum elements and longest weights in compact faces}\label{S7}

In this section we present additional results on minimum elements in
``compact'' faces (i.e., ones containing only finitely many weights) of
$\conv_\R \wt \vla$. We also show that there is a natural analogue in any
highest weight module $\vla$, of the long roots in the adjoint
representation. The following result characterizes the standard parabolic
subsets of highest weight modules that contain minimal elements, and also
identifies these elements as well as the longest weights.

\begin{prop}\label{Pmin}
Fix $\lambda \in \lie{h}^*, M(\lambda) \twoheadrightarrow \vla$, and $J
\subset I$. Then the following are equivalent.
\begin{enumerate}
\item $\wt_J \vla$ has a longest element (i.e., a weight $\mu$ with
maximum Euclidean norm $(\mu,\mu)$).

\item $\conv_\R \wt_J \vla$ is a convex polytope.

\item $J \subset J(\vla)$.

\item $\wt_J \vla$ has a minimum element in the standard partial order on
$\lie{h}^*$.
\end{enumerate}

\noindent If these conditions hold, then the longest weights in $\wt_J
\vla$ are precisely $W_J(\lambda)$. These include the maximum and minimum
elements in $\wt_J \vla$ in the standard partial order, which are unique
and equal $\lambda$ and $w_\circ^J(\lambda)$ respectively.
\end{prop}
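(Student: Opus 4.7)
The plan is to reduce all four conditions to the finiteness of $\wt_J \vla$, which by Theorem \ref{T1} is the same as condition (3). Once finiteness is established, I will identify the extremal weights by transferring the problem to the integrable top $L_J(\pi_J(\lambda))$ via the bijection $\varpi_J$ from Lemma \ref{Lfacts}, together with the vertex description from Proposition \ref{P11}.

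The direction $(3) \Rightarrow (1),(2),(4)$ is essentially trivial: a finite nonempty subset of $\lie{h}^*$ has a longest element and a unique minimum in the standard partial order, and its convex hull is a polytope. For the reverse directions, I will exploit the fact that $\wt_J \vla \subset \lambda - \Z_+ \Delta_J$, which is a discrete subset of $\liehr^*$ because $\Delta_J$ is linearly independent. Combined with positive-definiteness of the Killing form, any $(,)$-bounded subset of $\lambda - \Z_+ \Delta_J$ is finite. This immediately yields $(1) \Rightarrow (3)$ (a longest weight gives a uniform norm bound on all elements) and $(2) \Rightarrow (3)$ (a polytope is compact). For $(4) \Rightarrow (3)$, if $\mu_0 \in \wt_J \vla$ is a minimum, writing $\lambda - \mu_0 = \sum_{j \in J} e_j \alpha_j$ with $e_j \in \Z_+$, the partial-order inequalities $\mu_0 \leq \mu \leq \lambda$ force every $\mu \in \wt_J \vla$ into the finite box $\{\lambda - \sum_j c_j \alpha_j : 0 \leq c_j \leq e_j\}$.

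Having established (3), I will identify the longest weights using Proposition \ref{P11}, which yields $\conv_\R \wt_J \vla = \conv_\R W_J(\lambda)$. Because $W$ preserves $(,)$, every element of $W_J(\lambda)$ has norm squared equal to $(\lambda,\lambda)$, and because $\|{\cdot}\|^2$ is strictly convex on $\liehr^*$, its maximum on this compact convex polytope is attained only at extreme points; hence the longest weights in $\wt_J \vla$ are exactly $W_J(\lambda)$. The maximum of $\wt_J \vla$ in the standard partial order is clearly $\lambda$. For the unique minimum, I will transfer through $\varpi_J$ to the finite-dimensional simple $\lie{g}_J$-module $L_J(\pi_J(\lambda))$, whose unique lowest weight is $w_\circ^J \pi_J(\lambda)$; Lemma \ref{Lfacts}(4) gives $\varpi_J(w_\circ^J(\lambda)) = w_\circ^J \pi_J(\lambda)$, and pulling the lowest-weight property back through $\varpi_J$ shows that $w_\circ^J(\lambda)$ is the unique minimum of $\wt_J \vla$.

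The main subtlety will be that $\varpi_J$ is affine but not isometric, so the longest-weight characterization cannot simply be transferred through it. This is why I will handle longest weights intrinsically inside $\wt_J \vla$ using Proposition \ref{P11} together with convexity of $\|{\cdot}\|^2$, reserving $\varpi_J$ only for the partial-order minimum (which is preserved by the shift between $\lambda - \Z_+ \Delta_J$ and $\pi_J(\lambda) - \Z_+ \Delta_J$). A secondary point worth spelling out cleanly is the $(1) \Rightarrow (3)$ step, where positive-definiteness of $(,)$ is essential to convert a single norm bound into finiteness of the entire weight set.
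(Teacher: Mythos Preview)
Your approach is essentially the same as the paper's: reduce all conditions to finiteness of $\wt_J \vla$ (equivalently $J \subset J(\vla)$ by Theorem \ref{T1}), identify the vertices as $W_J(\lambda)$ via Proposition \ref{P11}, use strict convexity of the norm to pin down the longest weights, and transfer through $\varpi_J$ to locate the minimum. One slip to fix: in your second paragraph you claim $(3) \Rightarrow (4)$ is ``essentially trivial'' because a finite set has a unique minimum in the standard partial order---this is false (a finite poset need only have minimal elements, not a minimum), and indeed you correctly supply the real argument via $\varpi_J$ and the lowest weight of $L_J(\pi_J(\lambda))$ later on; just drop the earlier claim.
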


\begin{proof}
If (1) holds, then $\wt_J \vla$ cannot contain any string of the form
$\lambda - \Z_+ \alpha_i$ for $i \in I \setminus J(\vla)$, so (3)
follows. Clearly $(3) \implies (2)$ using Theorem \ref{T1}. Now if (2)
holds then $\wt_J \vla$ is a finite set, hence has a longest element.
Thus (1), (2), and (3) are equivalent. Next if $j \in J \setminus
J(\vla)$ then $\lambda - \Z_+ \alpha_j \subset \wt_J \vla$. Therefore
$(4) \implies (3)$. Conversely, if $J \subset J(\vla)$ then $\varpi_J :
\wt_J \vla \to \wt L_J(\pi_J(\lambda))$ is a bijection by Lemma
\ref{Lfacts}. Therefore it has a minimum element $w_\circ^J(\lambda)$,
which is unique by lowest weight theory. Hence (1)--(4) are equivalent.

Finally, note that every standard parabolic subset $\wt_J \vla$ always
contains the unique maximum element $\lambda$. Now if $J \subset I$ is
such that $\conv_\R \wt_J \vla$ is a convex, compact polytope, the norm
function attains its maximum value on this polytope. It is easy to verify
that the norm cannot be maximized at an interior point of a line segment.
Therefore the maximum value is attained at a vertex, i.e., at a point in
$W_{J \cap J(\vla)}(\lambda)= W_J(\lambda)$ (by Proposition \ref{P11}).
The proof is completed by recalling that $W_{J(\vla)} \subset W$ acts on
$\lie{h}^*$ by isometries.
\end{proof}
\pagebreak

It is also possible to obtain characterizations of the sets $J_3(\vla)$,
as well as of the minimal weights in compact faces of highest weight
modules. The following result, together with Proposition \ref{Pmin},
accomplishes these goals.

\begin{prop}\label{Pcompact}
Fix $\lambda \in \lie{h}^*$ and $M(\lambda) \twoheadrightarrow \vla$.
Given $\mu \in \wt M(\lambda)$, define
\begin{equation}
I_\lambda(\mu) := \{ i \in I : (\lambda - \mu, \omega_i) = 0 \}.
\end{equation}
\begin{enumerate}
\item Given $J \subset I$, $J = J_3(\vla)$ if and only if $J = J(\vla)
\setminus I_\lambda(\mu)$ for some $\mu \in \wt_{J(\vla)} \vla$.

\item Suppose $\mu \in \wt_{J(\vla)} \vla$. Then $\mu = \min
(\wt_{J(\vla) \setminus I_\lambda(\mu)} \vla)$ if and only if $\mu$ is a
longest weight in $\wt_{J(\vla)} \vla$ and $(\mu,\alpha_i) \leq 0$ for
all $i \in J(\vla) \setminus I_\lambda(\mu)$.
\end{enumerate}
\end{prop}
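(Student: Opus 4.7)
The plan is to reduce both parts to the finite-dimensional setting of the simple $\lie{g}_{J(\vla)}$-module $M := L_{J(\vla)}(\pi_{J(\vla)}(\lambda))$, via the bijection $\varpi_{J(\vla)} : \wt_{J(\vla)} \vla \to \wt M$ from Lemma \ref{Lfacts}. For $\mu \in \wt_{J(\vla)} \vla$, set $\nu := \varpi_{J(\vla)}(\mu)$; then $\pi_{J(\vla)}(\lambda) - \nu = \lambda - \mu \in \Z_+ \Delta_{J(\vla)}$, so
\[ J(\vla) \setminus I_\lambda(\mu) = \supp(\lambda - \mu). \]
Under $\varpi_{J(\vla)}$, standard parabolic subsets, minima in the root order, longest weights, and the pairings $(-,\alpha_i)$ for $i \in J(\vla)$ all transport faithfully.

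For part~(1)$\Rightarrow$, given $J = J_3(\vla)$ I take $\mu := w_\circ^J(\lambda) \in \wt_J \vla$; writing $\lambda - \mu$ as a sum over the connected components $C$ of $J$ (each meeting $\supp(\lambda)$ by definition of $J_3(\vla)$), \cite[Proposition 5.1]{KR} applied to each $C$ yields strictly positive coefficients on every $\alpha_c$, so $\supp(\lambda - \mu) = J$. For part~(1)$\Leftarrow$, the main technical fact is that for $\eta \in P^+$, every weight $\zeta = \eta - \xi$ of $L(\eta)$ has every connected component of $\supp(\xi)$ meeting $\supp(\eta)$. I would prove this by induction on the height of $\xi$, using Lemma \ref{Lweights} to obtain $i \in I$ with $\zeta + \alpha_i \in \wt L(\eta)$ and $\xi - \alpha_i \in Q^+$; the only case requiring work is when $\{i\}$ becomes a fresh isolated component of $\supp(\xi)$, which forces $(\alpha_i, \alpha_j) = 0$ for all $j \in \supp(\xi - \alpha_i)$ and hence $(\zeta + \alpha_i)(h_i) = \eta(h_i)$. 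If additionally $\eta(h_i) = 0$, the integrable $\alpha_i$-string through $\zeta + \alpha_i$ is symmetric about $\zeta + \alpha_i$; since $\zeta$ lies in the string, so does $\zeta + 2\alpha_i$, but then $\eta - (\zeta + 2\alpha_i)$ has $\alpha_i$-coefficient $-1$, contradicting $\zeta + 2\alpha_i \in \eta - Q^+$. This $\alpha_i$-string analysis is the main obstacle. Applied to $M$ with $\eta = \pi_{J(\vla)}(\lambda)$ and $\zeta = \nu$, every connected component of $J = \supp(\lambda - \mu)$ meets $\supp(\lambda)$, hence $J = J_3(\vla)$.

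For part~(2), set $K := J(\vla) \setminus I_\lambda(\mu) = \supp(\lambda - \mu) \subseteq J(\vla)$, so $\mu \in \wt_K \vla$ and $\nu \in \wt_K M$. Proposition \ref{P11} applied to $M$ identifies $\conv_\R \wt_K M$ as a polytope with vertex set $W_K \pi_{J(\vla)}(\lambda)$; Proposition \ref{Pmin} identifies its minimum in the root order as the unique $K$-antidominant element $w_\circ^K \pi_{J(\vla)}(\lambda)$. The forward direction is immediate: $\nu = w_\circ^K \pi_{J(\vla)}(\lambda)$ is a longest weight in $\wt M$ (so $\mu$ is longest in $\wt_{J(\vla)}\vla$ by Proposition \ref{Pmin}) and is $K$-antidominant, giving $(\mu, \alpha_i) = (\nu, \alpha_i) \le 0$ for $i \in K$. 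For the reverse, Proposition \ref{Pmin} converts the longest-weight hypothesis into $\|\nu\| = \|\pi_{J(\vla)}(\lambda)\|$; since $\nu \in \wt_K M$ and the Euclidean norm is strictly convex on $\conv_\R \wt_K M$, whose vertices have this maximum norm, $\nu$ must be a vertex. The $K$-antidominance assumption then pins $\nu$ down as $w_\circ^K \pi_{J(\vla)}(\lambda) = \min \wt_K M$, giving $\mu = \min \wt_K \vla$.
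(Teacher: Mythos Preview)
Your proof is correct, and both parts match the paper's conclusions, but the routes differ in interesting ways.

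For part~(1)$\Leftarrow$, the paper argues in one line by invoking Theorem~\ref{T5}: since $J = \supp(\lambda - \mu) \subset J(\vla)$ is the unique smallest $J'$ with $\mu \in \wt_{J'}\vla$, and since $\wt_J\vla = \wt_{J_3(\vla)}\vla$ with $J_3(\vla) \subset J$, minimality forces $J = J_3(\vla)$. You instead prove from scratch the underlying combinatorial fact (every connected component of $\supp(\eta - \zeta)$ meets $\supp(\eta)$ for $\zeta \in \wt L(\eta)$) via an inductive $\alpha_i$-string argument. Your argument is self-contained and does not rely on the main theorem of the paper, which is a genuine advantage if one wants this proposition to stand independently; the paper's approach is shorter but circular in spirit, since Theorem~\ref{T5} already encodes exactly this fact.

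For part~(2)$\Leftarrow$, the paper first obtains $\mu \in W_{J(\vla)}(\lambda) \cap \wt_{J'_\mu}\vla = W_{J'_\mu}(\lambda)$ using the vertex description of faces from Proposition~\ref{P11}, and then defers to \cite[Proposition~3.16]{CM} for the final antidominance step. You reach the same vertex conclusion via strict convexity of the Euclidean norm on $\conv_\R \wt_K M$ (maximum norm can only be attained at vertices), and then finish with the standard uniqueness of the $K$-antidominant element in a $W_K$-orbit. Your argument is again more elementary and avoids the external reference; the paper's approach is terser but leans on \cite{CM}.
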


In particular, the result provides a characterization of the sets
$J_{\min}(\vla)$ for all finite-dimensional modules $\vla = M(\lambda,I)
= L(\lambda)$ when $\lambda \in P^+$.

Note that the $W_{J(\vla)}$-orbit of all ``longest weights'' in
$\wt_{J(\vla)} \vla$ is a generalization in arbitrary $\vla$ of the
$W$-orbit of long roots in the adjoint representation. Apart from the
length, the longest weights generalize to $\vla$ several properties
satisfied by the long roots in $L(\theta) = \lie{g}$ for simple
$\lie{g}$. For instance, Propositions \ref{Pmin} and \ref{Pcompact}
extend \cite[Proposition 3.3(1), Remark 3.8, Lemma 3.12, and Propositions
3.9 and 3.16]{CM} to all highest weight modules $\vla$.

\begin{proof}
(1) It is not hard to compute from the definitions that $\lambda -
w_\circ^J(\lambda) =$\break
$\sum_{j \in J_3(\vla)} c_j \alpha_j$ with all $c_j > 0$. The formula
\eqref{Emin} for $J = J_3(\vla)$ follows by using $\mu =
w_\circ^J(\lambda)$ and the definition of $I_\lambda(\mu)$.
Conversely, suppose $J = J(\vla) \setminus I_\lambda(\mu)$ for some $\mu
\in \wt_{J(\vla)} \vla$. Note by the definition of $I_\lambda(\mu)$ that
$\mu \in \wt_J \vla$ but $\mu \notin \wt_{J'} \vla$ for any $J'
\subsetneq J$. This minimality of $J$ implies by Theorem \ref{T5} that $J
= J_3(\vla)$.

(2) Set $J'_\mu := J(\vla) \setminus I_\lambda(\mu)$ for ease of
exposition. If $\mu = \min (\wt_{J'_\mu} \vla)$, then $\mu \in
W_{J(\vla)}(\lambda)$, whence $\mu$ is a longest weight. Moreover, $\mu$
is the lowest weight of the $\lie{g}_{J'_\mu}$-module
$L_{\lie{g}_{J'_\mu}}(\lambda)$, so $s_i(\mu) \geq \mu$ for $i \in
J'_\mu$.
This implies that $(\mu, \alpha_i) \leq 0$ for $i \in J'_\mu$. To prove
the converse, first note that since $\mu$ is a longest weight, it is of
the form $W_{J(\vla)}(\lambda)$. Moreover, $\mu \in \wt_{J'_\mu} \vla$
(as in the previous part) by definition of $I_\lambda(\mu)$. Therefore
$\mu \in W_{J(\vla)}(\lambda) \cap \wt_{J'_\mu} \vla =
W_{J'_\mu}(\lambda)$. The remainder of the argument follows the proof of
\cite[Proposition 3.16]{CM}.
\end{proof}

\section{Example: finite-dimensional representations over a simple Lie
algebra}\label{Scellini}

For completeness, we conclude this paper by pointing out several
connections between the results proved in this paper, and results of the
recent paper \cite{CM} by Cellini and Marietti as well as the subsequent
preprint \cite{LCL} by Li--Cao--Li. Throughout this section, let
$\lie{g}$ denote a complex simple Lie algebra.

We first discuss the work \cite{CM}, in which the authors study the faces
of the root polytope $\calp(\theta) := \conv_\R \Phi$ for $\lie{g}$. This
corresponds to the special case where $\lambda = \theta$ is the highest
root and $\vla = M(\theta,I) = L(\theta) = \lie{g}$. It is not hard to
see in this case that
\begin{equation}\label{Esimple}
J_\lambda = J(\vla) = I_{\max} = I = I_{\min}
\end{equation}

\noindent (since $\theta \neq 0$) and that $F_J$ equals $\conv_\R \wt_{I
\setminus J} L(\theta) = \conv_\R \wt_{I \setminus J} \lie{g}$ in our
notation. Now given $J \subset I$, \cite[Proposition 3.7]{CM} says that
$F_{J'} = F_J$ if and only if $\partial J \subset J' \subset
\overline{J}$. On the other hand, Theorem \ref{T5} implies in the special
case $\vla = M(\theta,I) = \lie{g}$ that
\[
\wt_{J'} \lie{g} = \wt_J \lie{g} \quad \Longleftrightarrow \quad
J_{\min} \subset J' \subset J_{\max}.
\]

\noindent Given the uniqueness of the sets $\partial J, \overline{J},
J_{\min}, J_{\max} \subset I$ for every $J \subset I$, it is now possible
to provide a dictionary between our notation and that used in \cite{CM}.

\begin{prop}\label{Pcellini}
Suppose $\vla = \lie{g}$ with $\lie{g}$ a simple Lie algebra, and $J
\subset I$ is arbitrary. Then,
\begin{equation}\label{Eminmax3}
\partial J = I \setminus (I \setminus J)_{\max}, \qquad
\overline{J} = I \setminus (I \setminus J)_{\min},
\end{equation}

\noindent or conversely, $J_{\min} = I \setminus \overline{(I \setminus
J)}$ and $J_{\max} = I \setminus \partial(I \setminus J)$.
\end{prop}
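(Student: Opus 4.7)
The plan is to match the two descriptions of the interval $\{J' \subset I : F_{J'} = F_J\}$ and then invoke uniqueness of extremal elements. The key observation is that under the substitution $K := I \setminus J$, $K' := I \setminus J'$, the Cellini--Marietti parametrization $F_{J} = \conv_\R \wt_{I \setminus J} \lie{g} = \conv_\R \wt_K \lie{g}$ translates equalities of faces directly into equalities of standard parabolic subsets of $\wt \lie{g}$ indexed by the complementary subsets.

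First, I would note that in the present setup $\vla = \lie{g} = L(\theta)$ satisfies the hypotheses of Theorem \ref{T23} (in fact $\vla$ is simple and finite-dimensional), and by equation \eqref{Esimple} one has $J(\vla) = I$. Consequently, in Theorem \ref{T5} the condition $J' \setminus J(\vla) = J \setminus J(\vla)$ is automatic, and the equivalence of (2) and (3) there gives
\[
\conv_\R \wt_{K'} \lie{g} = \conv_\R \wt_K \lie{g} \ \Longleftrightarrow \ K_{\min} \subset K' \subset K_{\max}.
\]
Rewriting this in terms of $J, J'$ via complementation yields
\[
F_{J'} = F_J \ \Longleftrightarrow \ I \setminus (I \setminus J)_{\max} \subset J' \subset I \setminus (I \setminus J)_{\min}.
\]

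Next, I would compare this with Theorem \ref{Tcm1} of Cellini--Marietti, which characterizes the same set of $J'$ as the interval $[\partial J, \overline{J}]$ in $2^I$. Since an interval in a poset has unique minimum and maximum elements, this forces
\[
\partial J = I \setminus (I \setminus J)_{\max}, \qquad \overline{J} = I \setminus (I \setminus J)_{\min},
\]
which is the first form of \eqref{Eminmax3}. The converse formulas are obtained just by applying these identities with $I \setminus J$ in place of $J$ and taking complements.

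No real obstacle is expected: the argument reduces to combining Theorem \ref{T5} (which already furnishes the interval structure and its extremal endpoints for general $\vla$) with Theorem \ref{Tcm1} and the bijection $J \leftrightarrow I \setminus J$. The only subtle point is verifying that for $\vla = \lie{g}$ all of the hypothesis conditions of Theorem \ref{T5} collapse as above, so that equality of the convex faces $F_{J'} = F_J$ is actually equivalent to equality of the underlying weight sets $\wt_{K'} \lie{g} = \wt_K \lie{g}$; this is handled by $J(\vla) = I$ together with Theorem \ref{T23}.
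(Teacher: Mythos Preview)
Your argument is correct but takes a different route from the paper. You argue abstractly: Theorem \ref{T5} and Theorem \ref{Tcm1} each describe $\{J' : F_{J'} = F_J\}$ as an interval in $2^I$, and since an interval has unique endpoints, the two pairs of endpoints must coincide after the complementation $K = I \setminus J$. (The paper in fact sketches exactly this reasoning in the paragraph immediately preceding the proposition.) The paper's actual proof, however, proceeds by direct computation: it recalls the explicit definitions of $\overline{J}$ and $\partial J$ from \cite{CM} in terms of the affine Dynkin diagram (equation \eqref{Ecellini}), uses that the affine node satisfies $\alpha_0 = -\theta$ so that the connected component $(\widehat{I}\setminus J)_0$ corresponds exactly to $(I\setminus J)_3(\lie{g}) \sqcup \{0\}$, and then matches this against the closed formulas \eqref{Emin}, \eqref{Emax} to verify \eqref{Eminmax3} by hand.

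Your approach is shorter and more conceptual, relying on Theorem \ref{Tcm1} as a black box. The paper's direct verification buys two things: it explains concretely \emph{why} the affine-diagram description of \cite{CM} agrees with the formulas \eqref{Emin}, \eqref{Emax}, and it is what makes the later extensions (Proposition \ref{Pcm} and the discussion of \cite{LCL}) go through for general $\lambda \in P^+$, where one has no pre-existing Cellini--Marietti theorem to invoke and must instead check \eqref{Eminmax3} directly from the definitions.
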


Note that it is not hard to formulate equation \eqref{Eminmax3}, by
comparing Proposition \ref{Pfacet}(4) and \cite[Theorem 4.5(4)]{CM}, as
well as from the formula for $(I \setminus \{ i \})_{\max}$ in
Proposition \ref{Phalfspace}.

\begin{proof}
The second assertion follows from equation \eqref{Eminmax3}. To prove the
first assertion, we first recall the definition of $\partial J$ and
$\overline{J}$ from \cite[Section 3]{CM}. Let $\widehat{I} := I \sqcup \{
0 \}$ correspond to the simple roots $\widehat{\Delta} := \Delta \sqcup
\{ \alpha_0 \}$ associated to the affine Lie algebra $\widehat{\lie{g}}$.
Now given $J \subset I$, let $(\widehat{I} \setminus J)_0$ denote the
connected component of the Dynkin diagram of $\widehat{I} \setminus J$
that contains the affine root $\alpha_0$, and define:
\begin{equation}\label{Ecellini}
\overline{J} := I \setminus (\widehat{I} \setminus J)_0, \qquad \partial
J := J \setminus (\widehat{I} \setminus J)_0^\perp.
\end{equation}

\noindent To see why (the first formula in) \eqref{Eminmax3} follows from
equations \eqref{Emin}, \eqref{Emax}, and \eqref{Ecellini}, first recall
from \cite[Chapter VI.4.3]{Bou} that in the Dynkin diagram for
$\widehat{\lie{g}}$, the affine root $\alpha_0$ can be viewed as the
negative of the highest root for simple $\lie{g}$: $\alpha_0 = -\theta$.
Thus the connected components $C$ of (the Dynkin diagram of) $I \setminus
J$ such that $\pi_C(\theta) \neq 0$ correspond precisely to those simple
roots $\alpha_i \in \Delta_{I \setminus J}$ such that $\alpha_0(h_i) =
-\theta(h_i) \neq 0$, i.e., such that $C$ is contained in the connected
component of $\widehat{I} \setminus J$ containing $\alpha_0$. Therefore
$(I \setminus J)_3(L(\theta)) \sqcup \{ 0 \} =
\{ \alpha_0 \} \sqcup \Delta_{(I \setminus J)_3(\lie{g})} = (\widehat{I}
\setminus J)_0$ with a slight abuse of notation.
Moreover, equations \eqref{Esimple} and \eqref{Emin} imply that $K_{\min}
= K_3(\lie{g})$ for all $K \subset I$. Therefore,
\[
I \setminus (I \setminus J)_{\min} = I \setminus (I \setminus
J)_3(\lie{g}) = I \setminus (\widehat{I} \setminus J)_0 = \overline{J}.
\]

\noindent This proves the first formula in \eqref{Eminmax3}. To show the
second, we study equation \eqref{Emax} in closer detail in the current
special case. By equation \eqref{Esimple}, $K_2(\lie{g})$ and
$K_4(\lie{g})$ are empty for all $K \subset I$, whence
$K_2(\lie{g})^\perp = I$. Therefore using equation \eqref{Emax} and the
above analysis for $\overline{J}$,
\begin{align*}
I \setminus (I \setminus J)_{\max} = &\ I \setminus \left( 
(I \setminus J) \sqcup (J \cap \{ \theta \}^\perp \cap (I
\setminus J)_{\min}^\perp \cap I) \right)\\
= &\ J \setminus (\{ \theta \}^\perp \cap (I \setminus
J)_3(\lie{g})^\perp)\\
= &\ J \setminus (\{ \alpha_0 \} \sqcup \Delta_{(I \setminus
J)_3(\lie{g})})^\perp = J \setminus (\widehat{I} \setminus J)_0^\perp =
\partial J,
\end{align*}

\noindent which proves the second formula in \eqref{Eminmax3}.
\end{proof}

\begin{remark}
The dictionary of Proposition \ref{Pcellini} immediately helps translate
the results in this paper, in the special case $\lambda = \theta$ and
$\vla = L(\theta) = \lie{g}$, into many results in \cite{CM}.
In particular, it follows that most of the results in Sections 1, 3, 4,
and 5 of \cite{CM} are specific manifestations of
representation-theoretic phenomena that occur for all highest weight
modules over all semisimple Lie algebras.
Moreover, in \cite{CM} the authors worked with the root system -- i.e.,
the adjoint representation -- and hence were able to prove their results
using purely combinatorial arguments. In contrast, because we work with
arbitrary highest weight modules, the present paper provides an
alternative, representation-theoretic approach to proving the results in
\cite{CM} for $L(\theta) = \lie{g}$.

A further addition to the dictionary of Proposition \ref{Pcellini}
involves observing that many of the results in \cite{CM} are stated in
terms of the affine root $\alpha_0$ and the affine root system
$\widehat{\Phi}$ corresponding to the simple Lie algebra $\lie{g}$. As
noted above, $\alpha_0$ is simply the negative of the highest root
$\theta$ of $\lie{g}$, i.e., the highest weight of the adjoint
representation.
\end{remark}

There are also certain results in \cite{CM} that do not hold for all
modules $\vla$, but are specific to the combinatorics of the root system,
i.e., the adjoint representation $\lie{g} = L(\theta)$. For instance, the
size of the set $V_J = \wt_{I \setminus J} L(\theta)$ is a computation
specific to the root system and is expressed in terms of other root
systems; see \cite[Theorem 1.1(1)]{CM}. However, there are other
statements that are specific to the adjoint representation and yet can be
obtained from our results in previous sections via Proposition
\ref{Pcellini}. We now provide an alternate proof of one such statement
from \cite{CM}.

\begin{cor}\label{Ccellini}
Suppose $\lie{g}$ is simple with highest root $\theta$, $\vla = \lie{g} =
L(\theta)$, and $i \in I \supset J$.
Then the coordinate face $F_i = \conv_\R \wt_{I \setminus \{ i \}}
\lie{g}$ is a codimension-one facet of the root polytope $\conv_\R \wt
\lie{g}$, if and only if $\widehat{I} \setminus \{ \alpha_i \}$ is
connected (i.e., the corresponding parabolic root subsystem is
irreducible).
\end{cor}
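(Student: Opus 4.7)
The plan is to apply Proposition \ref{Pfacet} to the special case $\lambda = \theta$, $\vla = L(\theta) = \lie{g}$, and then translate the resulting combinatorial condition via the dictionary of Proposition \ref{Pcellini}. Since $\lie{g}$ is simple and $\theta \neq 0$, equation \eqref{Esimple} gives $J(\vla) = I_{\min} = I_{\max} = I$. In particular, the preliminary hypothesis $i \notin J(\vla) \setminus I_{\min} = \emptyset$ is vacuous, and by Proposition \ref{Phalfspace} the ``max'' half of Proposition \ref{Pfacet}(4) is automatic, since $(I \setminus \{i\})_{\max} = J(\vla) \setminus \{i\} = I \setminus \{i\} = I_{\max} \setminus \{i\}$. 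Thus the facet criterion collapses to the single equation
\[ (I \setminus \{i\})_{\min} = I \setminus \{i\}. \]

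The second step is to convert this into the affine-Dynkin criterion. By the converse formula in Proposition \ref{Pcellini}, $K_{\min} = I \setminus \overline{(I \setminus K)}$ for any $K \subset I$. Setting $K = I \setminus \{i\}$ yields $(I \setminus \{i\})_{\min} = I \setminus \overline{\{i\}}$, and the definition \eqref{Ecellini} of $\overline{\{i\}}$ then gives
\[ (I \setminus \{i\})_{\min} = I \cap (\widehat{I} \setminus \{i\})_0, \]
where $(\widehat{I} \setminus \{i\})_0$ denotes the connected component of the affine Dynkin diagram $\widehat{I} \setminus \{i\}$ containing the affine node $\alpha_0$. Hence the equality $(I \setminus \{i\})_{\min} = I \setminus \{i\}$ is equivalent to $(\widehat{I} \setminus \{i\})_0 \supseteq I \setminus \{i\}$. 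Since $(\widehat{I} \setminus \{i\})_0$ already contains $\alpha_0$ by definition, this inclusion is equivalent to $(\widehat{I} \setminus \{i\})_0 = \widehat{I} \setminus \{i\}$, i.e., to the connectedness of $\widehat{I} \setminus \{\alpha_i\}$.

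There is essentially no ``hard step'': both Proposition \ref{Pfacet} and Proposition \ref{Pcellini} do the heavy lifting, and the argument reduces to bookkeeping in the two parts above. The one mildly subtle point to verify is that, in the adjoint case, condition (4) of Proposition \ref{Pfacet} really does collapse to a single equation on the $(-)_{\min}$ side; this is taken care of by equation \eqref{Esimple} together with the formula for $(I \setminus \{i\})_{\max}$ in Proposition \ref{Phalfspace}. Combining these two steps then completes the proof.
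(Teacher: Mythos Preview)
Your proof is correct and follows essentially the same route as the paper: reduce Proposition~\ref{Pfacet}(4) to the single condition $(I \setminus \{i\})_{\min} = I \setminus \{i\}$ via Proposition~\ref{Phalfspace} and \eqref{Esimple}, then translate through Proposition~\ref{Pcellini} and the definition \eqref{Ecellini} to obtain the connectedness of $\widehat{I} \setminus \{\alpha_i\}$. The paper phrases the second step as ``$(I \setminus \{i\})_{\min} = I \setminus \{i\}$ iff $\overline{\{i\}} = \{i\}$'' and then appeals to \eqref{Ecellini}, whereas you unwind $\overline{\{i\}}$ explicitly in terms of $(\widehat{I} \setminus \{i\})_0$; these are the same computation.
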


Note that the result is precisely \cite[Theorem 4.5 (1) $\Leftrightarrow$
(2)]{CM}. We now show that it quickly follows from the analysis in
previous sections, via Proposition \ref{Pcellini}.

\begin{proof}
Note by Proposition \ref{Phalfspace} that $(I \setminus \{ i \})_{\max} =
I \setminus \{ i \}$ for all $i \in I$. Now apply the equivalence $(1)
\Longleftrightarrow (4)$ of Proposition \ref{Pfacet} for $\vla = \lie{g}$
simple). Thus, $F_i$ is a codimension-one facet if and only if $(I
\setminus \{ i \})_{\min} = I \setminus \{ i \}$, if and only if
$\overline{\{ i \}} = \{ i \}$ by Proposition \ref{Pcellini}. It is easy
to see using equation \eqref{Ecellini} that this condition is equivalent
to $\widehat{I} \setminus \{ \alpha_i \}$ being connected.
\end{proof}

\begin{remark}\label{Rcellini}
We make two further observations related to the recent paper \cite{CM}.
Recall that Proposition \ref{Pvinberg} provided a complete description of
the inclusion relations among the sets $\wt_J \vla$ (or their convex
hulls) for any highest weight module $\vla$:
\begin{align*}
\wt_J \vla \subset \wt_{J'} \vla \quad \Longleftrightarrow \quad &
J_{\min} \subset J'_{\min}, \ J \setminus J(\vla) \subset J' \setminus
J(\vla)\\
\Longleftrightarrow \quad &
J_{\max} \subset J'_{\max}, \ J \setminus J(\vla) \subset J' \setminus
J(\vla).
\end{align*}

\noindent Such a formulation (via the dictionary mentioned in Proposition
\ref{Pcellini}) was also discussed in the special case of the
finite-dimensional adjoint representation for simple $\lie{g}$ in
\cite[Remark 4.6]{CM}.

Next, for completeness we recall an interesting result shown recently by
Cellini and Marietti for the adjoint representation of a simple Lie
algebra. Namely, the authors show in \cite[Theorem 5.2]{CM} that no
standard parabolic subset of $\wt \lie{g}$ is the union of two nontrivial
orthogonal subsets. It is natural to ask if this result holds for other
highest weight modules. However, this is not the case; in fact the result
fails to hold even if $\lie{g}$ is simple and $\vla$ is
finite-dimensional. For example, when $\lie{g}$ is of type $C_2$ and
$\lambda = \theta_s$, the highest short root, the (nonzero) weights of
$L(\theta_s)$ comprise a root system of type $A_1 \times A_1$, hence can
be partitioned into two nontrivial orthogonal subsets.
\end{remark}

We end by pointing out that several of the main results in \cite{CM} hold
not only for the root polytope, but also for a large family of Weyl
polytopes:

\begin{prop}\label{Pcm}
(Notation as in Theorem \ref{Tcm1} and Definition \ref{Dminmax}.)
Suppose $\supp(\lambda) = \supp(\theta)$ for some $\lambda \in P^+$.
Define $F_J := \conv_\R \wt_{I \setminus J} L(\lambda)$ for $J \subset
I$. Then \cite[Theorems 1.2, 1.3]{CM} hold for $\conv_\R(\wt
L(\lambda))$, with the exact same formulas.
\end{prop}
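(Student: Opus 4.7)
The plan is to show that the extremal sets $K_{\min}$ and $K_{\max}$ attached to standard parabolic subsets $\wt_K \vla$ via Definition \ref{Dminmax} and equations \eqref{Emin}, \eqref{Emax} coincide for $\vla = L(\lambda)$ and for $\vla = L(\theta) = \lie{g}$, whenever $\supp(\lambda) = \supp(\theta)$. Once this is established, the formulas in Theorems \ref{T5}, \ref{Tfinite}, and Lemma \ref{Lfinite} all depend only on the data $K_{\min}, K_{\max}$, and so the conclusions of \cite[Theorems 1.2, 1.3]{CM} transfer verbatim to $L(\lambda)$ via the dictionary provided by Proposition \ref{Pcellini}.

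First I would observe that because $\lambda \in P^+$, Theorem \ref{T1} gives $J(L(\lambda)) = J_\lambda = I$, so for every $K \subset I$ the partition of Definition \ref{Dminmax} collapses: $K_1(L(\lambda)) = K_2(L(\lambda)) = K_4(L(\lambda)) = K_6(L(\lambda)) = \emptyset$, and $K_{\min} = K_3(L(\lambda))$ is simply the union of those connected components $C$ of the Dynkin diagram of $K$ with $\pi_C(\lambda) \neq 0$. Since $\pi_C(\lambda) \neq 0$ if and only if $C \cap \supp(\lambda) \neq \emptyset$, and the hypothesis $\supp(\lambda) = \supp(\theta)$ equates this with $\pi_C(\theta) \neq 0$, it follows that $K_{\min}(L(\lambda)) = K_{\min}(\lie{g})$ for every $K \subset I$. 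A parallel computation using \eqref{Emax}, with $K_2(L(\lambda))^\perp = I$ and $\{\lambda\}^\perp = I \setminus \supp(\lambda) = I \setminus \supp(\theta) = \{\theta\}^\perp$, yields $K_{\max}(L(\lambda)) = K_{\max}(\lie{g})$ as well.

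Next I would invoke Theorem \ref{T5} to conclude that $\wt_K L(\lambda) = \wt_{K'} L(\lambda)$ if and only if $K_{\min} \subset K' \subset K_{\max}$, which via Proposition \ref{Pcellini} (with $J = I \setminus K$, $J' = I \setminus K'$) translates into the statement $F_{J'} = F_J \Leftrightarrow \partial J \subset J' \subset \overline{J}$ of \cite[Theorem 1.2(1)]{CM}. The dimension and stabilizer assertions of \cite[Theorem 1.2(2)]{CM} follow from Theorem \ref{Tfinite}(1)--(2), noting that $K \setminus J(L(\lambda)) = \emptyset$ makes $\dim F_J = |K_{\min}| = |I \setminus \overline{J}|$ and $\operatorname{stab}_W F_J = W_{K_{\max}} = W_{I \setminus \partial J}$. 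The barycenter assertion \cite[Theorem 1.2(3)]{CM} follows from Lemma \ref{Lfinite}: since $J_\lambda = I$, the barycenter lies in $\Q_+ \Omega_{I \setminus K_{\max}} = \Q_+ \Omega_{\partial J}$. Finally, the vertex count and $f$-polynomial of \cite[Theorem 1.1(2), Theorem 1.3]{CM} follow from the last two parts of Theorem \ref{Tfinite}, which apply because $L(\lambda) = M(\lambda, J(L(\lambda)))$ makes hypothesis \eqref{E23} automatic; one need only observe $\{\lambda\}^\perp = \{\theta\}^\perp$ to identify $[W_K : W_{K \cap \{\lambda\}^\perp}]$ with $[W_{I \setminus J} : W_{(I \setminus J) \cap \{\theta\}^\perp}]$.

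The main obstacle is purely bookkeeping: verifying that every set entering the CM formulas -- $\partial J$, $\overline{J}$, the orthogonality condition $\{\theta\}^\perp$ defining vertex count, and the cone $\R_+ \Omega_{\partial J}$ housing the barycenter -- reduces to a quantity captured by $K_{\min}, K_{\max}$, and $\supp(\lambda)$ alone. No genuinely new combinatorial argument is required once this accounting is complete; the representation-theoretic machinery built in Sections 3--5 supplies all the identifications, and Proposition \ref{Pcellini} handles the translation between the notations of the present paper and \cite{CM}.
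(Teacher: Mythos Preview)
Your proposal is correct and follows essentially the same approach as the paper: both arguments reduce to the observation that the formulas \eqref{Emin}, \eqref{Emax} for $K_{\min}, K_{\max}$ depend only on $\supp(\lambda)$, $K$, and $K \setminus J(\vla) = \emptyset$, and then invoke Theorem \ref{Tfinite}, Lemma \ref{Lfinite}, and the dictionary of Proposition \ref{Pcellini}. One small slip: you write $K_6(L(\lambda)) = \emptyset$, but in fact $K_6(L(\lambda)) = J(L(\lambda)) \setminus K = I \setminus K$, which is generally nonempty; fortunately your subsequent computation of $K_{\max}$ via \eqref{Emax} implicitly uses the correct value, so the argument is unaffected.
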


One can similarly show that many of the other results in \cite{CM} also
go through completely unchanged for $\conv_\R(\wt L(\lambda))$, if
$\lambda$ and $\theta$ have the same support among the fundamental
weights $\Omega$.

\begin{proof}
The result follows from Theorem \ref{Tfinite}, Lemma \ref{Lfinite},
Proposition \ref{Pcellini}, and Corollary \ref{Ccellini} if we show that
equation \eqref{Eminmax3} holds for all $J \subset I$, for the simple
finite-dimensional module $\vla = L(\lambda)$. But this is clear by
Theorem \ref{T5}: the formulas in equations \eqref{Emin} and \eqref{Emax}
for $J_{\min}, J_{\max}$ only depend on $\supp(\lambda)$ as well as $J$
and $J \setminus J(\vla) = \emptyset$.
\end{proof}

\subsection{Finite-dimensional modules}

Since the present paper was submitted, the work \cite{LCL} by Li--Cao--Li
has subsequently appeared. In it, the authors have independently extended
Cellini--Marietti's results in \cite{CM} (by different methods than our
analysis in the present paper) to all finite-dimensional simple modules
over a simple Lie algebra $\lie{g}$. As our analysis holds more generally
for all highest weight modules and over all semisimple $\lie{g}$, we
conclude this section by discussing the main results of \cite{LCL}, and
how they fit into our framework in a manner similar to the results in
\cite{CM}.

In order to make the subsequent discussion more consistent with the
analysis both in \cite{CM} as well as above in this section, we begin by
setting some notation. Fix a simple Lie algebra $\lie{g}$ and a dominant
integral weight $\lambda \in P^+ \setminus \{ 0 \}$. In \cite{LCL}, the
authors define the \textit{extended Coxeter diagram} by adding a new node
$-\lambda$ to the Dynkin diagram of $\lie{g}$, with a (single additional)
edge between $-\lambda$ and a previous node $i \in I$ if $(\lambda,
\alpha_i) > 0$. Now set $\widehat{I}_\lambda := I \cup \{ -\lambda \}$,
and for $J \subset I$, define $(\widehat{I}_\lambda \setminus J)_0$ to be
the connected component of the extended Dynkin diagram of
$\widehat{I}_\lambda \setminus J$ that contains the new node $-\lambda$.
Finally, define $\overline{J} := I \setminus (\widehat{I}_\lambda
\setminus J)_0$.

In \cite{LCL}, the authors study the Weyl polytope $\calp(\lambda) =
\conv_\R \wt L(\lambda)$, focusing on its ``standard parabolic faces''
$F_J^\lambda := \conv_\R \wt_{I \setminus J} L(\lambda)$. In the above
notation, their main results are as follows:
\begin{itemize}
\item There exists a bijection between the distinct elements in the
multiset\break
$\{ \conv_\R \wt_J L(\lambda) : J \subset I \}$ and the connected
sub-diagrams of the extended Coxeter diagram on $\widehat{I_\lambda}$
that contain the node $-\lambda$. The authors observe in \cite{LCL} that
this bijection is a special case of the \textit{Putcha--Renner Recipe}
\cite[Theorem 4.16]{PuRe}, which yields connections to the theory of
algebraic monoids.

\item Each face $F_J^\lambda$ has affine hull $\displaystyle \R \Delta_{I
\setminus \overline{J}} = \R \Delta_{(\widehat{I}_\lambda \setminus J)_0
\setminus \{ -\lambda \}}$, which is spanned by roots. Moreover, $\dim
F_J^\lambda = | (\widehat{I}_\lambda \setminus J)_0 | - 1$.
\vfill\pagebreak

\item Given $J,J' \subset I$, $F_J^\lambda = F_{J'}^\lambda$ if and only
if $(\widehat{I}_\lambda \setminus J)_0 = (\widehat{I}_\lambda \setminus
J')_0$, or equivalently, if $\overline{J} = \overline{J'}$.

\item The barycenter of the face $F_J^\lambda$ is a nonnegative rational
linear combination of the fundamental weights in $\Omega_J$.

\item The $f$-polynomial of the Weyl polytope $\conv_\R \wt L(\lambda)$
is
\[ {\bf f}_{L(\lambda)}(t) = \sum_J [W : W_{(I \setminus J) \cup
(\widehat{I}_\lambda \setminus J)^\perp}] t^{|\widehat{I}_\lambda
\setminus J|-1}, \]

\noindent where the sum runs over all $J \subset I$ such that
$\widehat{I}_\lambda \setminus J$ is a connected sub-diagram of the
extended Coxeter diagram.
\end{itemize}

Note that when $\lambda = \theta$ is the highest root of $\lie{g}$, the
corresponding extended Coxeter diagram on $\widehat{I}_\theta$ is
precisely (the simple graph underlying) the affine Dynkin diagram
corresponding to $\lie{g}$, and in this situation the above results were
shown in \cite{CM}. Similarly, the case where $\lambda \in P^+$ has the
same support as $\theta$ was previously worked out in Proposition
\ref{Pcm}. For other $\lambda \in P^+$, the results in \cite{LCL} are
formulated along the lines of those in \cite{CM}, and extend many of the
results in \cite{CM}.

We now explain why for all $\lambda \in P^+$, the results shown in
\cite{LCL} also follow from the above analysis in the present paper, if
the same dictionary as in Proposition \ref{Pcellini} is used. To do so,
one first needs to define the analogue of the set $\partial J$ of simple
roots in \cite{CM}, which the authors do not define for general $\lambda
\in P^+$ in \cite{LCL}. Thus, we define (via Definition \ref{Dminmax}(5))
\begin{equation}
\partial J := J \setminus (\widehat{I}_\lambda \setminus J)_0^\perp
\subset I.
\end{equation}

Now observe that Proposition \ref{Pcellini} (with the new definitions of
$\overline{J}$ and $\partial J$) holds for $\vla = L(\lambda)$ for all
$\lambda \in P^+$, with the proof essentially unchanged. Moreover, the
formulas for $J_{\min}, J_{\max}$ depend only on $\lambda$ through its
support, since one still has $J(\vla) = J(L(\lambda)) = I$. It follows
that the results in \cite{LCL} (and others, including analogues of the
results in \cite{CM} and our main results in Section \ref{S3}) hold in
$\wt L(\lambda)$ for all $\lambda \in P^+$.

\section*{Acknowledgment}

The author would like to thank the anonymous referee for providing
numerous useful comments and suggestions, which helped improve the paper.



\bigskip

\end{document}